\newcommand{\foot}[1]{\mbox{}\marginpar{\raggedleft\hspace{0pt}\tiny #1}}
\newcommand{\RR}{\mathbb{R}}
\newcommand{\ZZ}{\mathbb{Z}}
\newcommand{\NN}{\mathbb{N}}
\newcommand{\CC}{\mathbb{C}}
\newcommand{\AAA}{\mathcal{A}}
\newcommand{\BBB}{\mathcal{B}}
\newcommand{\CCC}{\mathcal{C}}
\newcommand{\MMM}{\mathcal{M}}
\newcommand{\HHH}{\mathcal{H}}
\newcommand{\ph}{\varphi}
\newcommand{\eps}{\varepsilon}
\newcommand{\di}{\partial}
\newcommand{\llim}{\varliminf}
\newcommand{\ulim}{\varlimsup}
\newcommand{\ld}{\underline{d}}
\newcommand{\ud}{\overline{d}}
\newcommand{\htop}{h_\mathrm{top}\,}
\newcommand{\abs}[1]{\left\lvert#1\right\rvert}
\DeclareMathOperator{\Crit}{Crit}
\DeclareMathOperator{\diam}{diam}
\renewcommand{\foot}[1]{}
\newcommand{\Faen}{{F_\alpha^{\eps,N}}}
\newcommand{\Fae}{{F_\alpha^\eps}}
\newcommand{\Gaen}{{G_\alpha^{\eps,N}}}
\newcommand{\Gae}{{G_\alpha^\eps}}
\newcommand{\LLL}{\mathcal{L}}
\newcommand{\DDD}{\mathcal{D}}
\newcommand{\PPP}{\mathcal{P}}
\newcommand{\EEE}{\mathcal{E}}
\newcommand{\ZZZ}{\mathbf{Z}}
\newcommand{\BB}{\mathbf{B}}
\newcommand{\Aa}{\textbf{(A)}}
\newcommand{\Kd}[1]{K_{#1}^\DDD}
\newcommand{\Kad}{\Kd{\alpha}}
\newcommand{\Da}{\DDD(\alpha)}
\newcommand{\Ke}[1]{K_{#1}^\EEE}
\newcommand{\Kae}{\Ke{\alpha}}
\newcommand{\Ea}{\EEE(\alpha)}
\newcommand{\Kb}[1]{K_{#1}^\BBB}
\newcommand{\Kab}{\Kb{\alpha}}
\newcommand{\Ba}{\BBB(\alpha)}
\newcommand{\Kl}[1]{K_{#1}^\LLL}
\newcommand{\Kal}{\Kl{\alpha}}
\newcommand{\LDa}{\LLL_D(\alpha)}
\newcommand{\LEa}{\LLL_E(\alpha)}
\newcommand{\Mf}{\MMM^f}
\newcommand{\Mfe}{\MMM^f_E}
\newcommand{\ccph}{\overline{\CCC(\ph)}}
\newcommand{\Af}{\AAA_f}
\newcommand{\Tb}{T_\BBB}
\newcommand{\Te}{T_\EEE}
\newcommand{\Td}{T_\DDD}
\newcommand{\BL}{\BBB^{L_2}}
\newcommand{\TbL}{\Tb^{L_1}}
\newcommand{\TL}{T^{L_1}}
\newcommand{\SL}{S^{L_2}}
\newcommand{\SLL}{(S^{L_2})^{L_1}}
\newcommand{\EL}{\BBB^{L_4}}
\newcommand{\TeL}{\Te^{L_3}}
\newcommand{\DL}{\DDD^{L_4}}
\newcommand{\TdL}{\Td^{L_3}}
\newcommand{\TTL}{T^{L_3}}
\newcommand{\SSL}{S^{L_4}}
\newcommand{\uP}{\overline{CP}}
\newcommand{\lP}{\underline{CP}}
\newcommand{\amin}{\alpha_\mathrm{min}}
\newcommand{\amax}{\alpha_\mathrm{max}}
\newcommand{\llambda}{\underline{\lambda}}
\newcommand{\ulambda}{\overline{\lambda}}
\newcommand{\lowd}{\underline{d}}
\newcommand{\uppd}{\overline{d}}
\newcommand{\lhtop}{\underline{Ch}_\mathrm{top}}
\newcommand{\uhtop}{\overline{Ch}_\mathrm{top}}
\theoremstyle{plain}
\newtheorem{theorem}{Theorem}[section]
\newtheorem{proposition}[theorem]{Proposition}
\newtheorem{corollary}[theorem]{Corollary}
\newtheorem{lemma}[theorem]{Lemma}
\theoremstyle{definition}
\newtheorem{definition}{Definition}[section]
\theoremstyle{remark}
\newtheorem{example}[theorem]{Example}
\newtheorem*{remark}{Remark}
\numberwithin{equation}{section}
\title{Multifractal Formalism Derived from Thermodynamics}
\author{Vaughn Climenhaga}
\address{Department of Mathematics \\ McAllister Building \\ Pennsylvania State University \\ University Park, PA 16802, USA.}
\email{climenha@math.psu.edu}
\urladdr{http://www.math.psu.edu/climenha/}
\begin{document}

\date{\today}
\begin{abstract}
We show that under quite general conditions, various multifractal spectra may be obtained as Legendre transforms of functions $T\colon \RR\to \RR$ arising in the thermodynamic formalism.  We impose minimal requirements on the maps we consider, and obtain partial results for any continuous map $f$ on a compact metric space.  In order to obtain complete results, the primary hypothesis we require is that the functions $T$ be continuously differentiable.  This makes rigorous the general paradigm of reducing questions regarding the multifractal formalism to questions regarding the thermodynamic formalism.  These results hold for a broad class of measurable potentials, which includes (but is not limited to) continuous functions.  We give applications that include most previously known results, as well as some new ones.
\end{abstract}

\thanks{This work is partially supported by NSF grant 0754911.}

\maketitle

\section{Introduction}

\emph{A preliminary announcement (without proofs) of the results in this paper is to appear in \emph{Electronic Research Announcements}.}

\subsection{Overview of multifractal formalism}

The basic elements of the multifractal formalism were first proposed by Halsey \emph{et al} in~\cite{HJKPS86}, where they considered what they referred to as the \emph{dimension spectrum} or the \emph{$f(\alpha)$-spectrum for dimensions}, which characterises an invariant measure $\mu$ for a dynamical system $f\colon X\to X$ in terms of the level sets of the \emph{pointwise dimension}.  The pointwise dimension of $\mu$ at $x$ is defined as
\[
d_\mu(x) = \lim_{\eps\to 0} \frac{\log \mu(B(x,\eps))}{\log\eps},
\]
provided the limit exists, and the level sets are denoted
\[
\Kad = \{ x\in X \mid d_\mu(x) = \alpha \}.
\]
Many measures of interest are \emph{exact-dimensional}; that is, the pointwise dimension is constant $\mu$-almost everywhere.  In particular, this is true of hyperbolic measures (those with non-zero Lyapunov exponents almost everywhere)~\cite{BPS99}.  For an exact-dimensional measure, one of the $\Kad$ has full measure, and the rest have measure $0$, and so we measure the sizes of these sets with the Hausdorff dimension rather than with the measure; in this way we obtain the \emph{dimension spectrum for pointwise dimensions}, which is given by the function
\[
\Da=\dim_H \Kad.
\]

One goal of the multifractal formalism is to show that under certain conditions on $f$ and $\mu$, the function $\DDD$ is in fact analytic and concave on its domain of definition, and is related to the R\'enyi and Hentschel--Procaccia spectra for dimensions by a Legendre transform.  This was done by Rand~\cite{dR89} when $\mu$ is a Gibbs measure on a hyperbolic cookie-cutter (a dynamically defined Cantor set), and by Pesin and Weiss~\cite{PW97} for uniformly hyperbolic conformal maps:  modern expositions of the whole theory for uniformly hyperbolic systems can be found in~\cite{yP98,BPS97,TV00}.  More recently, various non-uniformly hyperbolic systems have been studied in~\cite{kN00,mT08,JR09,IT09b}.

There are other important examples of multifractal spectra; each such spectrum measures the level sets of some local quantity by using a global (dimensional) quantity.  For $\Da$, these roles are played by pointwise dimension and Hausdorff dimensions, respectively; one may also consider spectra defined using other quantities.

For example, one may consider the measure of small balls which are refined dynamically, rather than statically.  That is, rather than $B(x,\eps)$ we consider the Bowen ball of radius $\delta$ and length $n$, given by
\[
B(x,n,\delta) = \{y\in X\mid f^k(y)\in B(f^k(x),\delta) \text{ for } k=0,1,\dots,n \}.
\]
If the map $f$ has some eventual expansion, then the balls $B(x,n,\delta)$ decrease in size, and in measure, as $n$ increases with $\delta$ held fixed.  Just as the rate at which $\mu(B(x,\eps))$ decreases with $\eps$ is the pointwise dimension $d_\mu(x)$, so also the rate at which $\mu(B(x,n,\delta))$ decreases with $n$ is the \emph{local entropy} of $\mu$ at $x$
\[
h_\mu(x) = \lim_{\delta\to 0} \llim_{n\to\infty} -\frac{1}{n} \log \mu(B(x,n,\delta)),
\]
provided the limit exists.  We denote the level sets of the local entropy by
\[
\Kae = \{x\in X \mid h_\mu(x) = \alpha \}.
\]
It was shown by Brin and Katok that if $\mu$ is ergodic, then one of the level sets $\Kae$ has full measure, and the rest have measure $0$~\cite{BK83}; thus we must once again quantify them using a (global) dimensional characteristic.  It turns out to be more natural to measure the size of the sets $\Kae$ with the topological entropy rather than Hausdorff dimension; because these level sets are in general not compact, we must use the definition of topological entropy in the sense of Bowen~\cite{rB73}.  Upon doing so, we obtain the \emph{entropy spectrum for local entropies}
\[
\Ea = \htop (\Kae).
\]
For Gibbs measures on conformal repellers, this spectrum was studied in~\cite{BPS97}.  Takens and Verbitskiy~\cite{TV99} carred out the multifractal analysis in the more general case of expansive maps satisfying a specification property.

The Gibbs property of the measures studied so far is essential, because it relates local scaling quantities of the measure (pointwise dimension or local entropy) to asymptotic statistical properties of a potential function $\ph$.  In fact, the proofs of the known results for both the dimension and entropy spectra contain (at least implicitly) a similar result for the \emph{Birkhoff spectrum}.  Writing the sum of $\ph$ along an orbit as $S_n\ph(x) = \sum_{k=0}^{n-1} \ph(f^k(x))$, the \emph{Birkhoff average} of $\ph$ at $x$ is given by
\[
\ph^+(x) = \lim_{n\to\infty} \frac{1}{n} S_n \ph(x),
\]
provided the limit exists.  The level sets of the Birkhoff averages are
\[
\Kab = \{x\in X \mid \ph^+(x) = \alpha \},
\]
and the Birkhoff ergodic theorem guarantees that for any ergodic measure $\mu$, one of the level sets has full measure, and the rest have measure $0$.  Thus we once again measure their size in terms of topological entropy, and obtain the \emph{entropy spectrum of Birkhoff averages}
\[
\Ba = \htop (\Kab).
\]
In the uniformly hyperbolic setting, results on the Birkhoff spectrum were obtained in~\cite{PW01}, among other places.  More general results, some of which overlap with one of the results in this paper, were recently announced in~\cite{FH10}.

The general scheme tying all these spectra together is as follows.  Given an asymptotic local quantity---pointwise dimension, local entropy, Birkhoff average---we have an associated multifractal decomposition into level sets of this quantity.  These level sets are then measured using a global dimensional quantity---Hausdorff dimension or topological entropy.  This defines a multifractal spectrum, which associates to each real number $\alpha$ the dimension of the level set corresponding to $\alpha$.  

Following this general outline, each of the above spectra could also be defined using the alternate global dimensional quantity.  That is, we could define the \emph{entropy spectrum for pointwise dimensions} by
\[
\DDD_E(\alpha) = \htop(\Kad),
\]
and similarly for the \emph{dimension spectrum for local entropies} and the \emph{dimension spectrum for Birkhoff averages}.  It turns out that these \emph{mixed multifractal spectra} are harder to deal with than the ones we have defined so far; see~\cite{BS01} for further details.  We will restrict our attention to the spectra for which the local and global quantities are naturally related, and will generally simply refer to the \emph{entropy spectrum}, the \emph{dimension spectrum}, and the \emph{Birkhoff spectrum}.

We will see that the Birkhoff spectrum provides a simpler setting for arguments which also apply to the dimension and entropy spectra; it is also of interest in its own right, having applications to the theory of large deviations~\cite{PW01, BR87}.

One important example of a Birkhoff spectrum is worth noting.  In the particular case where $f$ is a conformal map and $\ph(x)=\log \|Df(x)\|$, the Birkhoff averages coincide with the Lyapunov exponents:  $\lambda(x) = \ph^+(x)$.  In this case we will also denote the level sets by
\[
\Kal = \{ x\in X \mid \lambda(x) = \alpha \};
\]
it turns out that we are able to examine not only the \emph{entropy spectrum for Lyapunov exponents}
\[
\LEa = \htop \Kal,
\]
but also the \emph{dimension spectrum for Lyapunov exponents}
\[
\LDa = \dim_H \Kal,
\]
by using a generalisation of Bowen's equation to non-compact sets~\cite{BS00,vC09a}.  We may refer to either $\LEa$ or $\LDa$ as the \emph{Lyapunov spectrum}.  It is often the case that the ``interesting'' dynamics takes place on a repeller which has Lebesgue measure zero---the Lyapunov spectrum provides information on how quickly the trajectories of nearby points escape from a neighbourhood of the repeller~\cite{BR87}.

Taken together, the various multifractal spectra provide a great deal of information about the map $f$.  In fact, certain classes of systems are known to exhibit \emph{multifractal rigidity}, in which a finite number of multifractal spectra completely characterise a map~\cite{BPS97}.


\subsection{General description of results}

Direct computation (numerical or otherwise) of the various multifractal spectra is quite difficult.  In the first place, in order to determine the level sets $K_\alpha$, one needs to first compute the asymptotic quantity (Birkhoff average, pointwise dimension, local entropy) at \emph{every} point of $X$.  Even if this is accomplished, it still remains to compute the (Bowen) topological entropy or Hausdorff dimension of $K_\alpha$ for every value of $\alpha$.  Because this quantity is defined as a critical point, rather than as a growth rate, it is more difficult to compute than the (capacity) topological entropy or the box dimension.  (These latter quantities are of little use in analysing the level sets $K_\alpha$ since they assign the same value to a set and to its closure, and the level sets $K_\alpha$ are dense in many natural situations.)

Rather than a direct frontal assault, then, the most successful method for analysing multifractal spectra has been to relate them to certain thermodynamic functions via the Legendre transform.  These thermodynamic functions, which are given in terms of the topological pressure, can be computed more easily than the multifractal spectra, as they are given in terms of the growth rates of a family of partition functions.

This approach goes back to~\cite{dR89} (the Legendre transform appeared already in~\cite{HJKPS86}, but in terms of the Hentschel--Procaccia and R\'enyi spectra, not in terms of the topological pressure).  To date, the general strategy informed by this philosophy has been as follows:
\begin{enumerate}[(1)]
\item Fix a specific class of systems---uniformly hyperbolic maps, conformal repellers, parabolic rational maps, Manneville--Pomeau maps, multimodal interval maps, etc.
\item Using tools specific to that class of systems (Markov partitions, specification, inducing schemes), establish thermodynamic results---existence and uniqueness of equilibrium states, differentiability of the pressure function, etc.
\item Using these thermodynamic results \emph{together with the original toolkit}, study the multifractal spectra, and show that they can be given in terms of the Legendre transform of various pressure functions.
\end{enumerate}

Despite the success of this approach for a number of different classes of systems, there do not appear to be any extant rigorous results which apply to general continuous maps and arbitrary potentials (but see the remark below concerning~\cite{FH10}).  Such results would give information about the multifractal analysis in settings far beyond those already considered; they would also establish the multifractal analysis as a direct corollary of the thermodynamic formalism, rendering Step (3) above automatic, and eliminating the need for the use of a specific toolkit to study the multifractal formalism itself.

The results of this paper are a step in this direction.  Not only do we obtain results that apply to general continuous maps regarding which nothing had been known, but the results described below also give alternate proofs of most previously known multifractal results, which are in some cases more direct than the original proofs.

We obtain our strongest result for the Birkhoff spectrum $\Ba$.  This result is given in Theorem~\ref{thm:birkhoff}, which applies to continuous maps $f\colon X\to X$ and to functions $\ph\colon X\to \RR$ which lie in a certain class $\Af$; this class contains, but is not limited to, the space of all continuous functions.  For such maps and functions, we show that the function $\Tb\colon q\mapsto P(q\ph)$, where $P$ is the pressure, is the Legendre transform of $\Ba$, \textbf{\emph{without any further restrictions on $f$ and $\ph$}}.  Furthermore, we show that $\Ba$ is the Legendre transform of $\Tb$, completing the multifractal formalism, \textbf{\emph{provided $\Tb$ is continuously differentiable and equilibrium measures exist}}.  If the hypotheses on $\Tb$ only hold for certain values of $q$, we still obtain a partial result on $\Ba$ for the corresponding values of $\alpha$.

\begin{remark}
After this paper was completed, the author was made aware of recent results announced by Feng and Huang in~\cite{FH10}, which deal with asymptotically sub-additive sequences of potentials, and which include Theorem~\ref{thm:birkhoff} for continuous potentials $\ph$ as a special case (however, they do not consider any of the dimension spectra).  Many of the methods of proof are similar, and it appears as though the other results in this paper could also be extended to the non-additive case they consider.

We observe that due to their definition of pressure, which only applies to functions $\ph$ such that $e^{\ph(x)}$ is continuous, their results do not apply to the discontinuous potentials in $\Af$, nor to the more general class of bounded measurable potentials for which we obtain partial results (see below).  To the best of the author's knowledge, the present results are the first rigorous multifractal results for general discontinuous potentials.
\end{remark}

Theorem~\ref{thm:birkhoff} gives an alternate (and more direct) proof of the multifractal formalism for the Birkhoff spectrum of a H\"older continuous potential function and a uniformly hyperbolic system, which was first established by Pesin and Weiss~\cite{PW01}.  It can also be applied to non-uniformly hyperbolic systems; in addition to some systems that have already been studied, we describe in Section~\ref{sec:app} a class of systems studied by Varandas and Viana~\cite{VV08} to which Theorem~\ref{thm:birkhoff} can be applied.  Proposition~\ref{prop:VV} gives multifractal results for these systems; these results appear to be completely new.

As stated, Theorem~\ref{thm:birkhoff} does not deal with phase transitions---that is, points at which the pressure function is non-differentiable.  Such points correspond (via the Legendre transform) to intervals over which the Birkhoff spectrum is affine (if the multifractal formalism holds).  In Theorem~\ref{thm:phase}, we give slightly stronger conditions on the map $f$, which are still fundamentally thermodynamic in nature, under which we can establish the complete multifractal formalism even in the presence of phase transitions.

It is often the case that thermodynamic considerations demonstrate the existence of a \emph{unique} equilibrium state for certain potentials.  In Proposition~\ref{prop:unique-works}, we show that if the entropy function is upper semi-continuous, then uniqueness of the equilibrium state implies differentiability of the pressure function and allows us to apply Theorem~\ref{thm:birkhoff}.  However, Example~\ref{eg:vw} shows that there are systems for which the pressure function is differentiable, and hence Theorem~\ref{thm:birkhoff} can be applied, even though the equilibrium state is non-unique.


One would like to understand for which classes of discontinuous potentials the multifractal formalism holds.  Things work well for $\ph\in \Af$ because the weak* topology is the same at $f$-invariant measures whether we consider continuous test functions or test functions in $\Af$.

Beyond this class of potentials, things are more delicate.  We consider general measurable potentials that are bounded above and below, and while we do not obtain results for all values of $\alpha$, we do obtain in Theorem~\ref{thm:high-entropy} complete results for those values of $\alpha$ at which $\TbL$ is larger than the topological entropy of the closure of the set of discontinuities of $\ph$, and for the corresponding values of $q$.

Ideally, we would be able to include \emph{unbounded} potentials in these results.  In particular, we would like to be able to consider the geometric potential $\ph(x) = -\log |f'(x)|$ for a multimodal map $f$; the presence of critical points leads to singularities of $\ph$, and so $\ph$ is not bounded above.  Theorem~\ref{thm:singularity} shows that the results of Theorem~\ref{thm:birkhoff} still hold for $q\leq 0$ (that is, values of $q$ such that $q\ph$ is bounded above) and for the corresponding values of $\alpha$.  The question of what happens for $q>0$ is more delicate and remains open.

In Section~\ref{sec:conformal}, we use a non-uniform version of Bowen's equation~\cite{vC09a} to give a result for the Lyapunov spectrum $\LDa$ in the case where $f$ is a conformal map without critical points, which satisfies some asymptotic expansivity properties.

In order to obtain results on the spectra $\Ea$ and $\Da$, for which the corresponding local quantities ($d_\mu(x)$ and $h_\mu(x)$) are defined in terms of an invariant measure $\mu$, we need some relationship between $\mu$ and a potential function $\ph$.  This is given by the assumption that $\mu$ is a \emph{weak Gibbs measure} for $\ph$; we observe that there are several cases in which weak Gibbs measures (of one definition or another) are known to exist~\cite{mY00,mK01,FO03,VV08,JR09}.

For such measures, we will see that the level sets $\Kae$ are determined by the level sets $\Kab$, and hence we obtain Theorem~\ref{thm:entropy}, which gives the corresponding result for the entropy spectrum $\Ea$ of a Gibbs measure, and follows from Theorem~\ref{thm:birkhoff}.  Writing $\ph_1=\ph-P(\ph)$, we find $\Ea$ as the Legendre transform of the function $\Te\colon q\mapsto P(-q\ph_1)$, \textbf{\emph{provided $\Te$ is continuously differentiable and equilibrium measures exist}}.  

Theorem~\ref{thm:dimension} deals with the dimension spectrum $\Da$ in the case where $f$ is conformal without critical points and $\mu$ is a weak Gibbs measure for a continuous potential $\ph$.  Passing to $\ph_1$ so that $P(\ph_1)=0$, we follow Pesin and Weiss~\cite{PW97}, and define a family of potential functions $\ph_q$ by
\begin{equation}\label{eqn:implicit}
\ph_q(x) = -\Td(q) \log \|Df(x)\| + q\ph_1(x),
\end{equation}
with $\Td(q)$ chosen so that $P(\ph_q)=0$.  Under mild expansivity conditions on $f$, we show that the implicitly defined function $\Td(q)$ is the Legendre transform of the dimension spectrum $\Da$, \textbf{\emph{without any further conditions on $f$ or $\ph$}}.  Furthermore, we show that $\Da$ is the Legendre transform of $\Td(q)$, completing the multifractal formalism, \textbf{\emph{provided $\Td$ is continuously differentiable and equilibrium measures exist}}.

Results for all of the above spectra have already been known in specific cases.  However, the present results differ from previous work in that their proofs do not use properties of the map $f$ that are specific to a particular class, but rather rely on thermodynamic results.  This is particularly true of Theorem~\ref{thm:birkhoff}, which requires nothing at all of $f$ besides continuity.  We also observe that the requirement of conformality in~\eqref{eqn:dimlyap} and Theorem~\ref{thm:dimension} is somehow unavoidable if we wish to use the any of the standard definitions of pressure; for a non-conformal map, one would need to consider a non-additive version of the pressure~\cite{lB96,FH10}, and it is not clear what implicit definition for $\Td$ should replace~\eqref{eqn:implicit}.

In Sections~\ref{sec:rmk} and~\ref{sec:app}, we make various general remarks concerning the results and their applications to both known and new examples.  Sections~\ref{sec:prep} through~\ref{sec:last} contain the proofs.

\emph{Acknowledgements.}  Many thanks are due to my advisor, Yakov Pesin, for the initial suggestion to pursue this approach, and for much guidance and encouragement along the way.  I would also like to thank Van Cyr, Katrin Gelfert, Stefano Luzzatto, Omri Sarig, Sam Senti, and Mike Todd for helpful conversations as this work took on its present form.

\section{Definitions and results for Birkhoff spectrum}

Throughout this section, we fix a compact metric space $X$, a continuous map $f\colon X\to X$, and a Borel measurable potential function $\ph\colon X\to \RR$.

To fix notation, we recall the definition of Hausdorff dimension.

\begin{definition}
Given $Z\subset X$ and $\eps>0$, let $\DDD(Z,\eps)$ denote the collection of countable open covers $\{ U_i \}_{i=1}^\infty$ of $Z$ for which $\diam U_i\leq \eps$ for all $i$.  For each $s\geq 0$, consider the set functions
\begin{align}
\label{eqn:mHse}
m_H(Z,s,\eps) &= \inf_{\DDD(Z,\eps)} \sum_{U_i} (\diam U_i)^s, \\
\label{eqn:mHs}
m_H(Z,s) &= \lim_{\eps\to 0} m_H(Z,s,\eps).
\end{align}
The \emph{Hausdorff dimension} of $Z$ is
\[
\dim_H Z = \inf \{ s>0 \mid m_H(Z,s)=0 \} = \sup \{ s>0 \mid m_H(Z,s)=\infty \}.
\]
It is straightforward to show that $m_H(Z,s)=\infty$ for all $s<\dim_H Z$, and that $m_H(Z,s)=0$ for all $s>\dim_H Z$.
\end{definition}

An analogous definition of topological entropy was given by Bowen~\cite{rB73}, establishing it as another dimensional characteristic.

\begin{definition}
Given $Z\subset X$, $\delta>0$, and $N\in\NN$, let $\PPP(Z,N,\delta)$ denote the collection of countable sets $\{ (x_i,n_i) \}_{i=1}^\infty \subset Z\times\NN$ such that $\{B(x_i,n_i,\delta)\}$ covers $Z$ and $n_i\geq N$ for all $i$.  For each $s\in\RR$, consider the set functions
\begin{align}
\label{eqn:mhNd}
m_h(Z,s,N,\delta) &= \inf_{\PPP(Z,N,\delta)} \sum_{(x_i,n_i)} e^{-n_i s}, \\
\label{eqn:mhd}
m_h(Z,s,\delta) &= \lim_{N\to\infty} m_h(Z,s,N,\delta),
\end{align}
and put
\[
\htop(Z,\delta) = \inf \{ s>0 \mid m_h(Z,s,\delta) = 0 \} = \sup \{ s>0 \mid m_h(Z,s,\delta) = \infty \}.
\]
As with Hausdorff dimension, we get $m_h(Z)=\infty$ for $s<\htop(Z,\delta)$, and $m_h(Z)=0$ for $s>\htop(Z,\delta)$.  The \emph{topological entropy} of $f$ on $Z$ is
\[
\htop(Z) = \lim_{\delta\to 0} \htop(Z,\delta).
\]
\end{definition}

If we replace the quantity $e^{-n_i s}$ in~\eqref{eqn:mhNd} with $e^{n_i s + S_{n_i} \ph(x_i)}$, the definition above gives us not the topological entropy but the topological \emph{pressure} $P_Z(\ph)$, introduced in this form by Pesin and Pitskel' in~\cite{PP84} (although the version of pressure we will discuss below dates back to Ruelle~\cite{dR73} and Bowen~\cite{rB75b}).  All three of these quantities (Hausdorff dimension, entropy, and pressure) are defined as critical points and have certain important properties common to a broad class of Carath\'eodory dimension characteristics (see~\cite{yP98} for details).  We will use two of these repeatedly, so they are worth mentioning here:  in the first place, given any countable family of sets $Z_i\subset X$, we have
\[
\dim_H \left(\bigcup_i Z_i\right) = \sup_i \dim_H Z_i,
\]
and similarly for $\htop Z$ and $P_Z(\ph)$.  Furthermore, all of these quantities can be bounded above in terms of a corresponding \emph{capacity}; for Hausdorff dimension, the corresponding capacity is the lower box dimension.  We recall the definitions of the analogues for entropy and pressure (see~\cite{yP98}).

\begin{definition}
A set $E\subset Z$ is $(n,\delta)$-spanning if $Z \subset \bigcup_{x\in E} B(x,n,\delta)$.  The \emph{(lower) capacity topological entropy} $\lhtop(Z)$ is the lower asymptotic growth rate of the minimal cardinality of an $(n,\delta)$-spanning set in $Z$.  More precisely, if $P_n^\delta$ is the minimal cardinality of such a set, then
\begin{align}
\label{eqn:lhZd}
\lhtop(Z,\delta) &= \llim_{n\to\infty} \frac 1n \log P_n^\delta, \\
\label{eqn:lhZ}
\lhtop(Z) &= \lim_{\delta\to 0} \lhtop(Z,\delta).
\end{align}
A similar definition taking the upper limit gives us $\uhtop(Z)$.

In the proof of Theorem~\ref{thm:dimension}, we will also need the notion of \emph{capacity topological pressure}, whose definition we recall here.  Fix a potential $\ph\colon X\to \RR$ and a subset $Z\subset X$.  For every $n\in \NN$, $\delta>0$, let $E_n$ be a minimal $(n,\delta)$-spanning set: then the lower capacity topological pressure of $\ph$ on $Z$ is given by
\begin{align}
\label{eqn:lPZd}
\lP_Z(\ph,\delta) &= \llim_{n\to\infty} \frac 1n \sum_{x\in E_n} e^{S_n \ph(x)}, \\
\label{eqn:lPZ}
\lP_Z(\ph) &= \lim_{\delta\to 0} \lP_Z(\ph,\delta).
\end{align}
We have a corresponding definition of $\uP_Z(\ph)$.  In the case $\ph=0$, these reduce to $\lhtop(Z)$ and $\uhtop(Z)$, respectively.

Elementary arguments given in~\cite{pW75} show that we can also use maximal $(n,\delta)$-separated sets in the above definitions, and we will occasionally do so.
\end{definition}

We observe that the definitions given above differ slightly from the definitions in~\cite{yP98}.  For a proof that both sets of definitions yield the same quantities when the potential $\ph$ is continuous, see~\cite[Proposition~4.1]{vC09a}.

In general, we have the following relationship between the three pressures~\cite[(11.9)]{yP98}:
\begin{equation}\label{eqn:pressures}
P_Z(\ph) \leq \lP_Z(\ph) \leq \uP_Z(\ph).
\end{equation}
If $Z$ is compact and $f$-invariant (for example, if $Z=X$), then we have equality in~\eqref{eqn:pressures}, and the variational principle relates the common quantity to the following definition, which we will use to state our thermodynamic requirements.

\begin{definition}
Let $\MMM(X)$ be the set of all Borel probability measures on $X$, and denote by $\Mf(X)$ the set of $f$-invariant measures in $\MMM(X)$.  

Given $\mu\in\Mf(X)$, write $h(\mu)$ for the measure theoretic entropy of $\mu$.  The \emph{(variational) pressure} of $\ph$ is
\begin{equation}\label{eqn:pressure}
P^*(\ph) = \sup \left\{ h(\mu) + \int \ph\,d\mu \,\Big|\, \mu\in \Mf(X) \right\}.
\end{equation}
If $Z\subset X$ is compact and $f$-invariant, we will write the pressure on $Z$ as
\[
P_Z^*(\ph) = \sup \left\{ h(\mu) + \int \ph\,d\mu \,\Big|\, \mu\in \Mf(Z) \right\},
\]
where $\Mf(Z) = \{ \mu \in \Mf(X) \mid \mu(Z) = 1 \}$.

Let $\Mfe(X)$ be the set of all ergodic measures in $\Mf(X)$.  It follows using the ergodic decomposition that~\eqref{eqn:pressure} is equivalent to
\[
P^*(\ph) = \sup \left\{ h(\mu) + \int\ph\,d\mu \,\Big|\, \mu\in \Mfe(X) \right\}.
\]
A measure $\nu\in\Mf(X)$ is an \emph{equilibrium state} for the potential $\ph$ if it achieves this supremum; that is, if
\[
P^*(\ph) = h(\nu) + \int\ph\,d\nu.
\]
Every equilibrium state is a convex combination of ergodic equilibrium states.
\end{definition}

As is customary in multifractal formalism, we use the Legendre transform in the following slightly non-standard form.

\begin{definition}
Recall that a function $T\colon \RR\to [-\infty, +\infty]$ is convex if
\begin{equation}\label{eqn:cvx}
T(aq + (1-a)q') \leq aT(q) + (1-a) T(q')
\end{equation}
for all $0\leq a\leq 1$ and $q,q'\in \RR$.  Given a convex function $T$, the \emph{Legendre transform} of $T$ is
\begin{equation}\label{eqn:TL}
\TL(\alpha) = \inf_{q\in \RR} (T(q) - q\alpha).
\end{equation}
Given a concave function $S\colon \RR\to [-\infty,+\infty]$ (for which the inequality in~\eqref{eqn:cvx} is reversed), the Legendre transform of $S$ is
\begin{equation}\label{eqn:SL}
\SL(q) = \sup_{\alpha\in \RR} (S(\alpha) + q\alpha).
\end{equation}
\end{definition}

The Legendre transform of a convex function is concave, and vice versa.  Furthermore, the Legendre transform is self-dual:  if $T$ is convex and $\TL = S$, then $\SL = T$.  Similarly, if $S$ is concave and $\SL = T$, then $\TL = S$.  

In what follows, we will consider situations in which the function $T$ is known to be convex (being given in terms of the pressure function), but the function $S$ is one of the multifractal spectra, about which we have no \emph{a priori} knowledge.  Observe that the Legendre transform of such a function $S$ can still be defined by~\eqref{eqn:SL}, but in this case we lose duality; in its place we get the statement that $\SLL$ is the concave hull of $S$, the smallest concave function bounded below by $S$.

Observe also that if $S(x)\geq 0$ for every $x\in \RR$, then $\SL$ is infinite everywhere.  Thus for purposes of defining the various multifractal spectra, we adopt the (non-standard) convention that $\htop \emptyset = \dim_H \emptyset = -\infty$.

We recall that if $T$ is known to be convex, then left and right derivatives exist at every point where $T$ is finite; we will denote these by
\[
D^- T(q) = \lim_{q'\to q^-} \frac{T(q) - T(q')}{q-q'}, \qquad
D^+ T(q) = \lim_{q'\to q^+} \frac{T(q') - T(q)}{q'-q}.
\]
Existence follows from monotonicity of the slopes of the secant lines.  Given a convex function $T$, define a map from $\RR$ to closed intervals in $\RR$ by $A(q) = [D^- T(q), D^+ T(q)]$.  Extend this in the natural way to a map from subsets of $\RR$ to subsets of $\RR$; we will again denote this map by $A$.  This map has the following useful property:  given any set $I_Q\subset \RR$ and $\alpha\in A(I_Q)$, we have
\[
\TL(\alpha) = \sup_{q\in I_Q} (T(\alpha) + q\alpha).
\]
This will be important for us in settings where we only have partial information about the functions $T$ and $S$.  We will also make use of a map in the other direction:  given a set $I_A \subset \RR$ (in the domain of $S$), we denote the set of corresponding values of $q$ by
\[
Q(I_A) = \{q\in \RR \mid A(q)\cap I_A \neq \emptyset \}.
\]
In particular, if $\alpha = T'(q)$, then $\alpha = A(q)$, and if $q=-S'(\alpha)$, then $q = Q(\alpha)$.  If $(q_1,q_2)$ is an interval on which $T$ is affine, then $A((q_1,q_2))$ is the slope of $T$ on that interval; furthermore, $\TL$ has a point of non-differentiability at $A((q_1,q_2))$.

In the results below, it will sometimes be important to know whether or not $T$ is differentiable.  A standard cardinality argument shows that $D^-T(q) = D^+T(q)$ at all but countably many values of $q$; however, the values of $q$ at which differentiability fails may \emph{a priori} be dense in $\RR$.

Our most general result gives the following function as the Legendre transform of the Birkhoff spectrum:
\begin{equation}\label{eqn:Tb}
\Tb(q) = P^*(q\ph),
\end{equation}
Note that the function $\Tb$ is convex; even before we establish that $\Tb$ is the Legendre transform of $\Ba$, convexity follows immediately from the definition of variational pressure as a supremum and the fact that for every $\mu\in \Mf(X)$, the function $q\mapsto h(\mu) + \int q\ph\,d\mu$ is linear.  

Finally, before stating the general result, we describe the class of functions to which it applies.  Given a function $\ph\colon X\to \RR$, let $\CCC(\ph)\subset X$ denote the set of points at which $\ph$ is discontinuous.  Then we let $\Af$ denote the class of Borel measurable functions $\ph\colon X\to \RR$ which satisfy the following conditions:
\begin{enumerate}[(A)]
\item $\ph$ is bounded (both above and below);
\item $\mu(\ccph) = 0$ for all $\mu\in \Mf(X)$.
\end{enumerate}
In particular, $\Af$ includes all continuous functions $\ph\in \CCC(X,\RR)$.  It also includes all bounded measurable functions $\ph$ for which $\CCC(\ph)$ is finite and contains no periodic points, and more generally, all bounded measurable functions for which $\ccph$ is disjoint from all its iterates.

We will see later (Proposition~\ref{prop:convergence}) that passing from $\CCC(X,\RR)$ to $\Af$ does not change the weak* topology at measures in $\Mf(X)$, which is the key to including these particular discontinuous functions in our results.

\begin{theorem}[The entropy spectrum for Birkhoff averages]\label{thm:birkhoff}
Let $X$ be a compact metric space, $f\colon X\to X$ be continuous, and $\ph\in \Af$.  Then
\begin{enumerate}[I.]
\item $\Tb$ is the Legendre transform of the Birkhoff spectrum:
\begin{equation}\label{eqn:TisBL}
\Tb(q) = \BL(q) = \sup_{\alpha\in\RR} (\Ba + q\alpha)
\end{equation}
for every $q\in \RR$.
\item The domain of $\Ba$ is bounded by the following:
\begin{align}
\label{eqn:amin}
\amin &= \inf \{ \alpha\in \RR \mid \Tb(q) \geq q\alpha \text{ for all } q \}, \\
\label{eqn:amax}
\amax &= \sup \{ \alpha\in \RR \mid \Tb(q) \geq q\alpha \text{ for all } q \},
\end{align}
That is, $\Kab = \emptyset$ for every $\alpha<\amin$ and every $\alpha>\amax$.
\item Suppose that $\Tb$ is $\CCC^r$ on $(q_1,q_2)$ for some $r\geq 1$, and that for each $q\in(q_1,q_2)$, there exists a (not necessarily unique) equilibrium state $\nu_q$ for the potential function $q\ph$.  Let $\alpha_1 = D^+\Tb(q_1)$ and $\alpha_2 = D^-\Tb(q_2)$; then 
\begin{equation}\label{eqn:BisTL}
\Ba=\TbL(\alpha) = \inf_{q\in\RR} (\Tb(q) - q\alpha)
\end{equation}
for all $\alpha\in (\alpha_1,\alpha_2)$.  In particular, $\Ba$ is strictly concave on $(\alpha_1,\alpha_2)$, and $\CCC^r$ except at points corresponding to intervals on which $\Tb$ is affine.
\end{enumerate}
\end{theorem}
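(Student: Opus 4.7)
The plan is to handle the three parts in order, with part (I) carrying the bulk of the technical content. For part (I), I would establish $\BL(q) = \Tb(q)$ by two separate inequalities. The lower bound $\BL(q) \geq \Tb(q)$ comes from the measure side: given an ergodic $\mu \in \Mfe(X)$ with $\alpha_\mu = \int\ph\,d\mu$, Birkhoff's ergodic theorem gives $\mu(\Kb{\alpha_\mu})=1$, hence $\BBB(\alpha_\mu) \geq h(\mu)$ by the standard inequality $\htop(Z) \geq h(\mu)$ whenever $\mu(Z)=1$; this yields $\BL(q) \geq h(\mu) + q\int\ph\,d\mu$, and the ergodic-decomposition form of the variational principle promotes this to $\BL(q) \geq P^*(q\ph) = \Tb(q)$. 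For the upper bound $\BL(q) \leq \Tb(q)$ I would stratify
\[
\Kab = \bigcup_N K_\alpha^N, \qquad K_\alpha^N = \{ x \in \Kab : \abs{S_n\ph(x) - n\alpha} < n\gamma \text{ for all } n \geq N \},
\]
and invoke countable stability of $\htop$. For a Bowen cover $\{B(x_i, n_i, \delta)\}$ of $K_\alpha^N$ with $n_i \geq N$ the bound $\abs{S_{n_i}\ph(x_i) - n_i\alpha} < n_i\gamma$ gives
\[
e^{-n_i s} \leq e^{-n_i(s + q\alpha - \abs{q}\gamma) + q S_{n_i}\ph(x_i)},
\]
so the Bowen $s$-mass of $K_\alpha^N$ is controlled by the Pesin--Pitskel pressure of $q\ph$ on $X$ at exponent $s + q\alpha - \abs{q}\gamma$. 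Letting $N\to\infty$, then $\gamma\to 0$, and using the variational principle $P_X(q\ph) = \Tb(q)$ for potentials in $\Af$ (which rests on Proposition~\ref{prop:convergence}), we obtain $\Ba + q\alpha \leq \Tb(q)$.

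Part (II) is then an immediate corollary of (I). If $\Kab \neq \emptyset$ then $\Ba \geq 0$, so (I) forces $q\alpha \leq \Tb(q)$ for every $q$, placing $\alpha \in [\amin, \amax]$. Contrapositively, if $\alpha \notin [\amin, \amax]$ the defining property of $\amin, \amax$ supplies a $q$ with $q\alpha > \Tb(q)$, and then (I) gives $\Ba < 0$, hence $\Kab = \emptyset$ under the convention $\htop(\emptyset) = -\infty$.

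For part (III), given $\alpha \in (\alpha_1, \alpha_2)$, pick $q \in (q_1, q_2)$ with $\Tb'(q) = \alpha$; then $\TbL(\alpha) = \Tb(q) - q\alpha$, and the upper bound $\Ba \leq \TbL(\alpha)$ is already delivered by (I). For the matching lower bound I would invoke the tangent-functional property: any equilibrium state $\nu_q$ for $q\ph$ satisfies
\[
\Tb(q') \geq h(\nu_q) + q'\int\ph\,d\nu_q = \Tb(q) + (q'-q)\int\ph\,d\nu_q
\]
for every $q' \in \RR$, so $\int\ph\,d\nu_q$ is a subgradient of $\Tb$ at $q$; differentiability of $\Tb$ forces $\int\ph\,d\nu_q = \alpha$. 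Since almost every ergodic component of $\nu_q$ is itself an equilibrium state and hence carries the same integral, this produces an ergodic $\nu_q^e$ with $\int\ph\,d\nu_q^e = \alpha$; by Birkhoff, $\nu_q^e(\Kab)=1$, so $\Ba \geq h(\nu_q^e) = \Tb(q) - q\alpha = \TbL(\alpha)$. Strict concavity and $\CCC^r$ regularity of $\Ba$ on $(\alpha_1, \alpha_2)$ then follow by dualising the corresponding properties of $\Tb$ on $(q_1, q_2)$. The main technical obstacle is the upper half of (I): the covering argument needs the Pesin--Pitskel variational principle extended beyond continuous potentials, which is precisely why the hypothesis $\mu(\ccph)=0$ for all $\mu\in\Mf(X)$ is essential rather than cosmetic.
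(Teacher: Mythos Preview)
Your proposal is correct, and Parts~I (lower bound) and~III match the paper's argument essentially line for line: the paper proves $\Tb \leq \BL$ via ergodic measures and Birkhoff's theorem exactly as you do, and its Proposition~\ref{prop:ruelle} and Corollary~\ref{cor:alpha-interval} are your tangent-functional/ergodic-component argument for Part~III.

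The two genuine differences are in the upper bound of Part~I and in Part~II. For $\Tb \geq \BL$, the paper does \emph{not} run a covering comparison against the Pesin--Pitskel pressure $P_X(q\ph)$. Instead it applies the Walters-type measure-building construction (Lemma~\ref{lem:buildmeasure}) directly to the approximate level sets $\Faen$: for each $\eps,N,\gamma$ one manufactures $\mu\in\Mf(X)$ with $h(\mu)\geq \lhtop(\Faen)-\gamma$ and $\int\ph\,d\mu\in[\alpha-\eps,\alpha+\eps]$, and then reads off $P^*(q\ph)\geq \htop(\Kab)+q\alpha$ straight from the variational definition of $P^*$. This sidesteps entirely the question of whether $P_X(q\ph)=P^*(q\ph)$ holds for discontinuous $\ph\in\Af$, which your route must confront (and which, as you note, would itself be proved via the same measure-building plus Proposition~\ref{prop:convergence}). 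So the two approaches use the same core ingredient; the paper localises it to $\Faen$, you globalise it to $X$ and then descend by a covering estimate. The paper's organisation is a bit more self-contained for $\ph\in\Af$; yours makes the role of the pressure comparison more transparent.

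For Part~II, you derive it as a corollary of Part~I (nonempty $\Kab$ gives $\Ba\geq 0$, hence $\Tb(q)\geq q\alpha$). The paper instead proves it independently (Proposition~\ref{prop:empty}) by taking a weak*-limit of empirical measures along an orbit in $\Kab$ and invoking Proposition~\ref{prop:convergence}. Your shortcut is valid and arguably cleaner.
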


Observe that the first two statements hold for \emph{every} continuous map $f$, without any assumptions on the system, thermodynamic or otherwise.  For discontinuous potentials in $\Af$, these are the first rigorous multifractal results of any sort known to the author.

Using the maps $A$ and $Q$ introduced above, Part III can be stated as follows:  if $\Tb$ is $\CCC^r$ on an open interval $I_Q$ and equilibrium states exist for all $q\in I_Q$, then~\eqref{eqn:BisTL} holds for all $\alpha\in A(I_Q)$.  If in addition $\Tb$ is strictly convex on $I_Q$, then $\Ba$ is $\CCC^r$ on $A(I_Q)$.

We will show later that if the entropy map is upper semi-continuous, then the conclusion of Part III holds at $\alpha_1$ and $\alpha_2$ as well.  We will also see (Proposition~\ref{prop:unique-works}) that existence of a \emph{unique} equilibrium state on an interval $(q_1,q_2)$ is enough to guarantee differentiability, and hence to apply Theorem~\ref{thm:birkhoff}.  As shown in Example~\ref{eg:vw} below, though, we may have differentiability without uniqueness.


\section{Phase transitions and generalisations of Theorem~\ref{thm:birkhoff}}\label{sec:gen}

\begin{figure}[tbp]
	\includegraphics{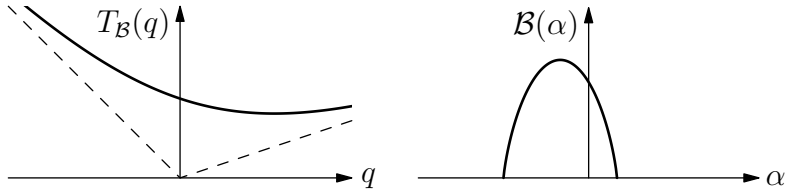}
	\caption{The Birkhoff spectrum for a map with no phase transitions.}
	\label{fig:no-phase-transition}
\end{figure}

If $\Tb$ is continuously differentiable for all $q$, then we obtain the complete Birkhoff spectrum, as shown in Figure~\ref{fig:no-phase-transition}.  However, there are many physically interesting systems which display \emph{phase transitions}---that is, values of $q$ at which $\Tb$ is non-differentiable.  For example, if $f\colon [0,1]\to [0,1]$ is the Manneville--Pomeau map and $\ph$ is the geometric potential $\log \abs{f'}$, then $\Tb$ is as shown in Figure~\ref{fig:phase-transition}~\cite{kN00}; in particular, $\Tb$ is not differentiable at $q_0$.  Thus Theorem~\ref{thm:birkhoff} gives the Birkhoff spectrum on the interval $[\alpha_1,\alpha_2]$, where $\alpha_1 = \lim_{q\to q_0^+} \Tb'(q)$, but says nothing about the interval $[0,\alpha_1)$, on which $\TbL(\alpha) = -q_0 \alpha$ is linear.

\begin{figure}[tbp]
	\includegraphics{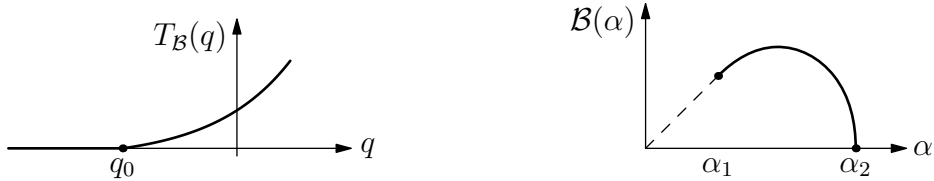}
	\caption{A phase transition in the Manneville--Pomeau map.}
	\label{fig:phase-transition}
\end{figure}

In fact, it is known that for this particular example, we have $\Ba = \TbL$ even on the linear stretch corresponding to the point of non-differentiability of $\Tb$~\cite{kN00}.  However, this is not universally the case, as may be seen by ``gluing together'' two unrelated maps.  Consider two maps $f_1\colon X_1 \to X_1$ and $f_2\colon X_2 \to X_2$, where $X_1$ and $X_2$ are disjoint, and suppose that the thermodynamic functions are as shown in Figure~\ref{fig:non-transitive}.  Let $X = X_1 \cup X_2$, and define a map $f\colon X\to X$ such that the restriction of $f$ to $X_i$ is $f_i$ for $i=1,2$.  Then $\Tb(q) = P^*(q\ph) = \max\{P_1^*(q\ph|_{X_1}), P_2^*(q\ph|_{X_2})\}$, where $P_i^*$ denotes the pressure of $f_i$, and furthermore $\Ba$ is the maximum of $\htop (\Kab\cap X_1)$ and $\htop (\Kab\cap X_2)$.  Thus $\Tb$ is non-differentiable at $q=0$, which corresponds to the interval $[\alpha_2,\alpha_3]$ on which $\TbL$ is constant.  Applying Theorem~\ref{thm:birkhoff} to each of the subsystems $f_i$, we see that $\Ba=\TbL(\alpha)$ on $[\alpha_1,\alpha_2]$ and $[\alpha_3,\alpha_4]$, but that the two are not equal on $(\alpha_2,\alpha_3)$, and that $\Ba$ is not concave on this interval.

\begin{figure}[tbp]
	\includegraphics{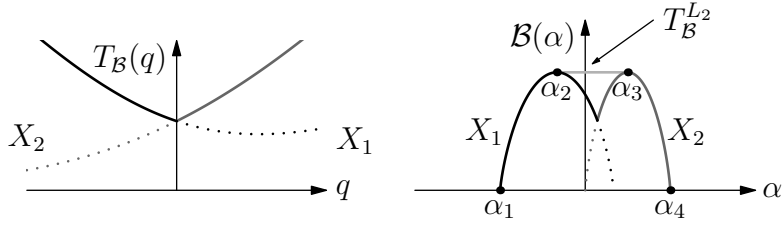}
	\caption{A different sort of phase transition.}
	\label{fig:non-transitive}
\end{figure}

\begin{example}\label{eg:vw}
Given $m,n\in \NN$, let $(X_1,f_1) = (\Sigma_m^+,\sigma)$ and $(X_2,f_2)=(\Sigma_n^+,\sigma)$ be the full one-sided shifts on $m$ and $n$ symbols, respectively, and construct $f\colon X\to X$ as above, where $X=X_1 \cup X_2$.  Choose two vectors $v\in \RR^m$ and $w\in \RR^n$, and let $\ph\colon X\to \RR$ be given by
\[
\ph(x) = \begin{cases}
v_{x_1} &x=x_1 x_2 \dots \in X_1 = \Sigma_m^+, \\
w_{x_1} &x=x_1 x_2 \dots \in X_2 = \Sigma_n^+.
\end{cases}
\]
Then an easy computation using the classical definition of pressure and the variational principle shows that
\begin{multline*}
\Tb(q) = P^*(q\ph) = \max(P_1^*(q\ph), P_2^*(q\ph)) \\
= \max\left(\log \left(\sum_{i=1}^m e^{qv_i}\right), \log \left(\sum_{j=1}^n e^{qw_j} \right) \right).
\end{multline*}
In particular, we see that $P_1^*(0) = \log m$ and $P_2^*(0) = \log n$, and also that
\begin{equation}\label{eqn:derivatives}
\begin{aligned}
\frac{d^k}{dq^k}P_1^*(q\ph)|_{q=0} &= \log\left(\sum_i v_i^k\right), \\
\frac{d^k}{dq^k}P_2^*(q\ph)|_{q=0} &= \log\left(\sum_j w_j^k\right).
\end{aligned}
\end{equation}
By judicious choices of $v$ and $w$, we can observe a variety of behaviours in the Birkhoff spectrum $\Ba$.  If $m=n$ but $\sum_i v_i \neq \sum_j w_j$, we obtain the picture shown in Figure~\ref{fig:non-transitive}.  

If $m=n$ and $\sum_i v_i = \sum_j w_j$, but $\sum_i v_i^2 > \sum_j w_j^2$, then the two pressure functions $P_1^*(q\ph)$ and $P_2^*(q\ph)$ are tangent at $q=0$, corresponding to the existence of two ergodic measures of maximal entropy (one on $X_1$ and one on $X_2$), but for values of $q$ near $0$, there is a unique equilibrium state supported on $X_1$.

Finally, if $m=n$ and $\sum_i v_i^k = \sum_j w_j^k$ for $k=1,2$, but not for $k=3$, then the two pressure functions are still tangent at $q=0$, but now the equilibrium state passes from $X_1$ to $X_2$ as $q$ passes through $0$.  Despite this transition and the non-uniqueness of the measure of maximal entropy, the pressure function $\Tb$ is still differentiable at $0$.
\end{example}

Having seen two very different manifestations of phase transitions (Figures~\ref{fig:phase-transition} and~\ref{fig:non-transitive}), we see that any generalisation of Theorem~\ref{thm:birkhoff} that treats phase transitions must somehow distinguish between these two sorts of behaviour.  The key difference is that in the first case, the system $f\colon X\to X$ can be approximated from within by a sequence of subsystems $X_n$ on which there is no phase transition---that is, the following condition holds~\cite{kN00,GR09}:
\begin{description}
\item[(A)]  There exists a sequence of compact $f$-invariant subsets $X_n\subset X$ such that the pressure function $q\mapsto P_{X_n}^*(q\ph)$ is continuously differentiable for all $q\in \RR$ (and equilibrium states exist), and furthermore,
\begin{equation}\label{eqn:pressures-converge}
\lim_{n\to\infty} P_{X_n}^*(q\ph) = P^*(q\ph).
\end{equation}
\end{description}
This condition fails for the example in Figure~\ref{fig:non-transitive}, in which the phase transition represents a jump from one half of the system to the other half, which is disconnected from the first, rather than an escaping of measures to an adjacent fixed point.  Using Condition \Aa, we can state a general theorem which extends Theorem~\ref{thm:birkhoff} to maps for which $\Tb$ has points of non-differentiability.

\begin{theorem}\label{thm:phase}
Let $X$ be a compact metric space, $f\colon X\to X$ be continuous, and $\ph\in \Af$.  If Condition \Aa\ holds, then we have~\eqref{eqn:BisTL} for all $\alpha\in (\amin,\amax)$.
\end{theorem}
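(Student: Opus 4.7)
The plan is to apply Theorem~\ref{thm:birkhoff} to each of the subsystems $X_n$ supplied by Condition~\Aa, and then transfer the result to the full system via the pointwise convergence of the pressure functions.  For every $n$, the restricted potential $\ph|_{X_n}$ belongs to $\AAA_{f|_{X_n}}$ since the boundedness and almost-sure-continuity conditions defining $\Af$ pass to closed $f$-invariant subsets; writing $T_n(q) = P^*_{X_n}(q\ph)$, Condition~\Aa\ provides $C^1$ regularity of $T_n$ on all of $\RR$ together with equilibrium states for every $q$, so Part~III of Theorem~\ref{thm:birkhoff} applies on the subsystem and yields $\htop(\Kab \cap X_n) = T_n^{L_1}(\alpha)$ on the corresponding interval of the subsystem spectrum.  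Monotonicity of Bowen entropy under inclusion gives $\Ba \geq T_n^{L_1}(\alpha)$ for every $n$, while Part~I of Theorem~\ref{thm:birkhoff} automatically delivers the upper bound $\Ba \leq \TbL(\alpha)$ (since $\Tb = \BL$ makes $\TbL$ the concave hull of $\Ba$).  The proof thus reduces to the convex-analytic claim
\[
\sup_n T_n^{L_1}(\alpha) \geq \TbL(\alpha) \qquad\text{for every } \alpha \in (\amin, \amax).
\]

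For this, fix $\alpha \in (\amin, \amax)$ and $\eps > 0$.  Since $\alpha$ lies strictly between the asymptotic slopes $\amin, \amax$ of $\Tb$, the function $q \mapsto \Tb(q) - q\alpha$ is coercive (tending to $+\infty$ as $|q| \to \infty$).  I may therefore pick $R > 0$ at which $\Tb$ is differentiable (which holds at all but countably many points) and such that $\Tb'(R) > \alpha > \Tb'(-R)$.  Pointwise convergence of the convex functions $T_n \to \Tb$ automatically upgrades to uniform convergence on the compact set $[-R, R]$, with derivatives converging at points of differentiability of $\Tb$; hence for $n$ sufficiently large,
\[
|T_n(q) - \Tb(q)| < \eps \text{ for all } q \in [-R, R], \qquad T_n'(R) > \alpha > T_n'(-R).
\]

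The main obstacle is that these bounds only control $T_n$ on the compact set $[-R,R]$, whereas $T_n^{L_1}(\alpha) = \inf_{q \in \RR}(T_n(q) - q\alpha)$ is an infimum over all of $\RR$.  The remedy is to exploit convexity of $T_n$: for $q \geq R$, the supporting-line inequality gives
\[
T_n(q) - q\alpha \geq T_n(R) + (q-R)T_n'(R) - q\alpha = \bigl(T_n(R) - R\alpha\bigr) + (q-R)\bigl(T_n'(R) - \alpha\bigr) \geq T_n(R) - R\alpha,
\]
since the second summand is non-negative by our choice of $R$.  Uniform convergence on $[-R, R]$ combined with the trivial inequality $\Tb(R) - R\alpha \geq \TbL(\alpha)$ then yields $T_n(R) - R\alpha \geq \TbL(\alpha) - \eps$, so $T_n(q) - q\alpha \geq \TbL(\alpha) - \eps$ for all $q \geq R$; the symmetric argument handles $q \leq -R$, and the compact regime is immediate from uniform convergence.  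Therefore $T_n^{L_1}(\alpha) \geq \TbL(\alpha) - \eps$ for $n$ large, and letting $\eps \to 0$ concludes the proof.
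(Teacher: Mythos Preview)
Your proof is correct and follows essentially the same strategy as the paper's: apply Theorem~\ref{thm:birkhoff} on each subsystem $X_n$ to bound $\Ba$ from below by $T_n^{L_1}(\alpha)$, then use the pointwise convergence $T_n\to\Tb$ to conclude.  The difference is only in how the last step is executed.  The paper tracks the minimisers $q_n$ of $q\mapsto T_n(q)-q\alpha$, builds the corresponding ergodic equilibrium states $\mu_n$ with $\int\ph\,d\mu_n=\alpha$ directly, and argues that $q_n\to q_0=Q(\alpha)$ so that $h(\mu_n)=T_n(q_n)-q_n\alpha\to\Tb(q_0)-q_0\alpha=\TbL(\alpha)$.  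You instead invoke Part~III of Theorem~\ref{thm:birkhoff} as a black box on $X_n$ and then give a purely convex-analytic argument that $\sup_n T_n^{L_1}(\alpha)\geq\TbL(\alpha)$: uniform convergence on $[-R,R]$ controls the compact regime, and the tangent-line inequality at $\pm R$ (where you have arranged $T_n'(R)>\alpha>T_n'(-R)$) controls the tails.  Your version is somewhat cleaner in that it explicitly verifies that $\alpha$ lies in the range of $T_n'$ for large $n$, which the paper simply asserts; on the other hand, the paper's measure-by-measure approach makes the mechanism (an ergodic measure sitting on $\Kab$ with entropy close to $\TbL(\alpha)$) more visible.  One cosmetic point: when you write ``$\Ba\geq T_n^{L_1}(\alpha)$ for every $n$'', this is literally true but only because $T_n^{L_1}(\alpha)=-\infty$ when $\alpha$ falls outside the subsystem's spectrum; the substantive inequality is only needed for large $n$, which your choice of $R$ guarantees.
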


As mentioned just before Theorem~\ref{thm:birkhoff}, the key property of potentials $\ph \in \Af$ is that weak* convergence to an invariant measure implies convergence of the integrals of $\ph$; this is the only place in the proof where we use the requirement that $\ph$ lie in $\Af$.  

For potentials outside of $\Af$, we can try to regain approximate convergence results at certain relevant measures by using the topological entropy of $\ccph$ to give a bound on how much weight a neighbourhood of $\ccph$ carries.

To this end, given $h\geq 0$, consider the set
\[
I_A(h) = \{\alpha\in \RR \mid \TbL(\alpha) > h\},
\]
and also its counterpart
\[
I_Q(h) = Q(I_A(h)).
\]
Geometrically, $I_Q(h)$ may be described as the set of values $q\in \RR$ such that there is a line through $(q,\Tb(q))$ that lies on or beneath the graph of $\Tb$ and intersects the $y$-axis somewhere above $(0,h)$.

\begin{theorem}\label{thm:high-entropy}
Let $X$ be a compact metric space, $f\colon X\to X$ be continuous, and $\ph\colon X\to \RR$ be measurable and bounded (above and below).  Let $\CCC(\ph)$ be the set of discontinuities of $\ph$, and let $h_0 = \lhtop(\CCC(\ph))$.  Then 
\begin{enumerate}[I.]
\item For every $q\in I_Q(h_0)$, we have the following version of~\eqref{eqn:TisBL}:
\begin{equation}\label{eqn:TisBL2}
\Tb(q) = \sup_{\alpha\in I_A(h_0)} (\Ba + q\alpha).
\end{equation}
\item $\Ba \leq h_0$ for every $\alpha\notin I_A(h_0)$.
\item Suppose that $\Tb$ is $\CCC^r$ on $(q_1,q_2)\subset Q(h_0)$ for some $r\geq 1$, and that for each $q\in (q_1,q_2)$ there exists a (not necessarily unique) equilibrium state $\nu_q$ for the potential function $q\ph$.  Then~\eqref{eqn:BisTL} holds for all $\alpha\in (\alpha_1,\alpha_2) = A((q_1,q_2))$.
\end{enumerate}
\end{theorem}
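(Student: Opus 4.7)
The strategy is to adapt the proof of Theorem~\ref{thm:birkhoff} by using the topological entropy bound $h_0 = \lhtop(\CCC(\ph))$ as a surrogate for the condition $\ph \in \Af$. The guiding principle is that any invariant measure $\mu$ with $h(\mu) > h_0$ must give zero mass to an appropriate invariant enlargement of $\overline{\CCC(\ph)}$ (by the variational principle for topological entropy); consequently $\ph$ is $\mu$-a.e.\ continuous, and the integral-convergence property that makes weak$^*$ arguments work for $\ph \in \Af$ is restored at such measures.

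For Part II, I would invoke the general upper bound $\Ba \leq \TbL(\alpha)$, valid for any bounded measurable $\ph$. Since $\frac{1}{n} S_n\ph(x) \to \alpha$ on $\Kab$, the definitions of Bowen pressure and entropy on non-compact sets yield $\htop(\Kab) \leq P_{\Kab}(q\ph) - q\alpha \leq \Tb(q) - q\alpha$ for every $q$, and taking the infimum over $q$ gives the bound. For $\alpha \notin I_A(h_0)$, the definition of $I_A(h_0)$ forces $\Ba \leq h_0$.

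For Part I, the inequality $\geq$ is immediate from Part II. For the reverse, I would first establish the unrestricted identity $\Tb(q) = \sup_{\alpha \in \RR}(\Ba + q\alpha)$, valid for every bounded measurable $\ph$: given ergodic $\mu$, Birkhoff's theorem places $\mu$ on $\Kb{\alpha_\mu}$ with $\alpha_\mu = \int\ph\,d\mu$, giving $h(\mu) \leq \BBB(\alpha_\mu)$ and hence $h(\mu) + q\int\ph\,d\mu \leq \BBB(\alpha_\mu) + q\alpha_\mu$; the supremum over ergodic $\mu$ yields $\Tb(q) \leq \sup_\alpha(\Ba + q\alpha)$, and Part II supplies the matching inequality. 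Given this, one then argues for $q \in I_Q(h_0)$ that the unrestricted supremum agrees with $\sup_{\alpha \in I_A(h_0)}(\Ba + q\alpha)$: any near-maximizing sequence $\alpha_n$ satisfies $\TbL(\alpha_n) + q\alpha_n \to \Tb(q)$, so accumulation points lie in $A(q)$, and the hypothesis $A(q) \cap I_A(h_0) \neq \emptyset$---together with upper semicontinuity of $\TbL$ and its linearity on $A(q)$ with slope $-q$---allows a modification of the sequence to lie inside $I_A(h_0)$ without changing the limit.

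Part III would then follow the template of Theorem~\ref{thm:birkhoff}~III essentially verbatim: on $(q_1,q_2) \subset I_Q(h_0)$, differentiability of $\Tb$ selects a unique $\alpha = \Tb'(q) \in (\alpha_1,\alpha_2) \subset I_A(h_0)$, and an equilibrium state $\nu_q$ for $q\ph$ (or an ergodic component, whose average is still $\alpha$ by differentiability) delivers a measure on $\Kab$ of entropy $\Tb(q) - q\alpha = \TbL(\alpha)$, so $\Ba \geq h(\nu_q) = \TbL(\alpha)$, and Part~II supplies the reverse inequality. I expect the most delicate step to be the sequence-modification argument in Part~I at values of $q$ where $\Tb$ is non-differentiable and $A(q)$ overlaps $I_A(h_0)$ only partially; there one must construct measures on $\Kab$ for $\alpha$ just inside $I_A(h_0)$ with entropies approaching $\TbL(\alpha)$ from below, and this is precisely where the entropy control on $\overline{\CCC(\ph)}$ is invoked, via a variational-principle argument on an invariant superset of entropy at most $h_0$, to prevent mass from concentrating near the discontinuities of $\ph$.
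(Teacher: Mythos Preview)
Your proof of Part~III is fine and mirrors the paper's: Propositions~\ref{prop:concave}, \ref{prop:ruelle}, and Corollary~\ref{cor:alpha-interval} make no use of continuity of $\ph$, so they go through unchanged.

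The gap is in Parts~I and~II. Both rest on your claim that the bound $\Ba \leq \TbL(\alpha)$ holds for \emph{every} $\alpha$ and every bounded measurable $\ph$, justified via the chain $\htop(\Kab) \leq P_{\Kab}(q\ph) - q\alpha \leq \Tb(q) - q\alpha$. The second inequality here is $P_Z(q\ph) \leq P^*(q\ph)$, one half of the variational principle. Its standard proof (Walters~9.10) builds a measure from $(n,\delta)$-separated sets and passes to a weak* limit; continuity of $\ph$ is used precisely to guarantee $\int\ph\,d\mu_n \to \int\ph\,d\mu$. For discontinuous $\ph$ this step fails, and that failure is exactly the obstruction Theorem~\ref{thm:high-entropy} is designed to work around. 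The paper does \emph{not} claim or prove the unrestricted bound $\Ba \leq \TbL(\alpha)$; it proves only the weaker implication ``$\Ba > h_0 \Rightarrow \TbL(\alpha) \geq \Ba$,'' which suffices for Part~II by contraposition and for Part~I once one separates the cases $\Ba \leq h_0$ and $\Ba > h_0$.

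Your guiding principle---that $h(\mu) > h_0$ forces $\mu$ to give zero mass to an ``invariant enlargement'' of $\overline{\CCC(\ph)}$ of entropy at most $h_0$---does not help either. The set $\CCC(\ph)$ is not invariant, and there is no reason any invariant set containing it has entropy bounded by $\lhtop(\CCC(\ph))$. More importantly, the issue is not the limit measure $\mu$ but the approximating (non-invariant) empirical measures $\mu_n$: one must show these do not put too much mass near $\CCC(\ph)$. The paper does this in Lemma~\ref{lem:high-entropy} by a combinatorial argument: one covers $\CCC(\ph)$ by roughly $e^{m(h_0-\eta)}$ Bowen balls of length $m$, counts how many points of a maximal $(n,\delta)$-separated set in $\Faen$ can have their $f^k$-image in such a ball (at most $P_{n-m}^\delta$ each, by separation), and concludes that $\mu_n^m(U_m) \lesssim e^{-\eta m}$ along a suitable subsequence. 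This argument is the substantive content of the theorem and is absent from your proposal.
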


Finally, although we are not yet able to give a complete treatment of unbounded potential functions, we can show that everything works if our potential function is bounded below and we only consider $q\leq 0$.

\begin{theorem}\label{thm:singularity}
Let $X$ be a compact metric space, $f\colon X\to X$ be continuous, and $\ph\colon X\to \RR \cup \{+\infty\}$ be continuous where finite (and hence bounded below).  Let $\alpha_0 D^-\Tb(0)$, and let $\amin$ be given by~\eqref{eqn:amin}, so $(\amin,\alpha_0) = A((-\infty,0))$.  Then 
\begin{enumerate}[I.]
\item For every $q\leq 0$,~\eqref{eqn:TisBL} holds.
\item For $\alpha<\amin$, we have $\Kab = \emptyset$.
\item Suppose that $\Tb$ is $\CCC^r$ on $(q_1,q_2)$ for some $r\geq 1$ and $q_1 < q_2\leq 0$, and that for each $q\in (q_1,q_2)$ there exists a (not necessarily unique) equilibrium state $\nu_q$ for the potential $q\ph$.  Then~\eqref{eqn:BisTL} holds for all $\alpha\in (\alpha_1,\alpha_2) = A((q_1,q_2))$.
\end{enumerate}
\end{theorem}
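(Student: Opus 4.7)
The plan is to reduce to Theorem~\ref{thm:birkhoff} by truncating the singular potential. Set $\ph_N := \min(\ph,N)$; since $\ph$ is continuous as a map into the one-point compactification $\RR\cup\{+\infty\}$ and bounded below by some constant $-C$, each $\ph_N$ is a bounded continuous function, hence $\ph_N\in\Af$. The hypothesis $q\leq 0$ is crucial throughout: it guarantees that $q\ph$ is bounded above (by $|q|C$) and that $q\ph_N\searrow q\ph$ pointwise while remaining bounded above, so that every invariant-measure integral $\int q\ph_N\,d\mu$ converges to $\int q\ph\,d\mu\in[-\infty,|q|C]$ by monotone convergence.

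For Part II I would repeat the argument of Theorem~\ref{thm:birkhoff}.II almost verbatim. If $x\in\Kab$ for some $\alpha<\amin$, then the empirical measures $\frac1n\sum_{k=0}^{n-1}\delta_{f^k(x)}$ have a weak* limit $\mu\in\Mf(X)$; the finiteness of the Birkhoff average of $\ph$ at $x$ forces the orbit to visit the open set $\{\ph>M\}$ with frequency at most $(\alpha+C)/(M+C)$, so $\mu$ gives the singular set zero mass and $\int\ph\,d\mu=\alpha$. Then $\Tb(q)\geq h(\mu)+q\alpha\geq q\alpha$ for every $q\in\RR$, contradicting the definition of $\amin$.

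For Part I, I would first show $\Tb^N(q):=P^*(q\ph_N)\searrow\Tb(q)$ as $N\to\infty$ for each $q\leq 0$. Monotonicity is immediate; for $\limsup_N\Tb^N(q)\leq\Tb(q)$, take near-maximizing measures $\mu_N$ in the variational principle for $\Tb^N(q)$, pass to a weak* subsequential limit $\mu^*$, and combine upper semi-continuity of entropy with the bound $\limsup_N\int q\ph_N\,d\mu_N\leq\int q\ph\,d\mu^*$, obtained by first comparing $q\ph_N\leq q\ph_M$ for $N\geq M$ to reduce to weak* continuity of the bounded integrand $q\ph_M$, then sending $M\to\infty$. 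Applying Theorem~\ref{thm:birkhoff}.I to each $\ph_N$ gives $\Tb^N(q)=\sup_\alpha(\htop K_\alpha^N + q\alpha)$, where $K_\alpha^N$ is the level set for Birkhoff averages of $\ph_N$; comparing these to $\Kab$ via $S_n\ph_N\leq S_n\ph$ and $\ph_N\nearrow\ph$ and passing to the limit yields \eqref{eqn:TisBL} for all $q\leq 0$. Part III follows the proof of Theorem~\ref{thm:birkhoff}.III directly: for $q\in(q_1,q_2)$, the equilibrium state $\nu_q$ automatically satisfies $\int\ph\,d\nu_q<\infty$ (since $q<0$ and $\Tb(q)=h(\nu_q)+q\int\ph\,d\nu_q$ is finite with bounded entropy), its Birkhoff averages concentrate at $\alpha=\Tb'(q)=\int\ph\,d\nu_q$ by the usual subdifferential argument for convex $\Tb$, and its entropy realizes $h(\nu_q)=\Tb(q)-q\alpha=\TbL(\alpha)$.

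The main obstacle is the convergence argument in Part I: one must simultaneously ensure that the near-maximizers for $\Tb^N$ do not escape into $\{\ph=+\infty\}$ and that the level sets $K_\alpha^N$ exhaust $\Kab$ in an entropy-respecting sense. Both rely on the fact that for $q\leq 0$, concentrating mass near $\{\ph=+\infty\}$ incurs a severe penalty through the term $q\int\ph\,d\mu$, so the relevant measures remain effectively supported where $\ph_N$ and $\ph$ agree. Without the hypothesis $q\leq 0$ this penalty becomes a reward, the truncation ceases to approximate from the correct side, and the method breaks down---which is exactly why the theorem is silent for $q>0$.
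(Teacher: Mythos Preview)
Your treatment of Parts~II and~III is essentially what the paper does: empirical measures plus a one-sided integral bound for Part~II, and the Ruelle derivative formula together with the ergodic equilibrium state for Part~III. The problem lies in your argument for Part~I.

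You deduce $\limsup_N \Tb^N(q)\leq \Tb(q)$ by taking near-maximisers $\mu_N$ for $P^*(q\ph_N)$, passing to a weak* limit $\mu^*$, and invoking \emph{upper semi-continuity of the entropy map}. But the theorem does not assume this; it only assumes $f$ continuous on a compact metric space. Without upper semi-continuity of $\mu\mapsto h(\mu)$ you have no control over $h(\mu^*)$ relative to $\limsup h(\mu_N)$, and the inequality $\lim_N P^*(q\ph_N)\leq P^*(q\ph)$ is exactly a $\sup$/$\inf$ exchange that fails in general. Since you only have $\Tb^N(q)\geq \Tb(q)$ (the monotonicity direction), the bound $\Tb^N(q)\geq \Ba+q\alpha$ that your level-set comparison yields cannot be pushed down to $\Tb(q)\geq \Ba+q\alpha$. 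A secondary issue is that $x\in \Kab$ does not force $\ph_N^+(x)$ to exist, so the inclusion $\Kab\subset\bigcup_{\beta}K_\beta^N$ already needs some care; but even if you repair this with approximate level sets, the entropy issue remains.

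The paper avoids truncation entirely. It works directly with the potential $\psi=q\ph$, which for $q\leq 0$ takes values in $\RR\cup\{-\infty\}$ and is continuous where finite; for such $\psi$ the map $\mu\mapsto\int\psi\,d\mu$ is upper semi-continuous on $\MMM(X)$ (Proposition~\ref{prop:singularity}). The standard Walters construction on the approximate level sets $\Faen$ then produces an invariant measure $\mu$ with $h(\mu)\geq \uhtop(\Faen)-\gamma$ \emph{from the construction itself} (no semi-continuity of entropy required) and with $\int q\ph\,d\mu$ bounded below by the one-sided estimate (Lemma~\ref{lem:singularity}). This gives $P^*(q\ph)\geq \Ba+q\alpha$ directly. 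The moral is that the only semi-continuity one actually needs is that of $\mu\mapsto\int q\ph\,d\mu$, which comes for free from $q\leq 0$; routing the argument through $\Tb^N$ trades this for semi-continuity of entropy, which is a genuine extra hypothesis.
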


An analogous result holds for $q\geq 0$ if $\ph$ is bounded above but not below.  Also, as with Theorem~\ref{thm:birkhoff}, Part III extends to the endpoints $\alpha_i$ if the entropy map is upper semi-continuous.

\section{Conformal maps and Lyapunov spectra}\label{sec:conformal}

\begin{definition}
We say that a continuous map $f\colon X\to X$ is \emph{conformal} with factor $a(x)$ if for every $x\in X$ we have
\begin{equation}\label{eqn:conformal}
a(x) = \lim_{y\to x} \frac{d(f(x),f(y))}{d(x,y)},
\end{equation}
where $a\colon X\to [0,\infty)$ is continuous.  A point $x\in X$ is a \emph{critical point} of $f$ if $a(x)=0$.  We denote the Birkhoff sums of $\log a$ by
\[
\lambda_n(x) = \frac 1n S_n (\log a)(x),
\]
and consider the lower and upper limits
\[
\llambda(x) = \llim_{n\to\infty} \lambda_n(x), \qquad
\ulambda(x) = \ulim_{n\to\infty} \lambda_n(x).
\]
If they agree (that is, if the limit exists), we write
\[
\lambda(x) = \lim_{n\to\infty} \lambda_n(x)
\]
for the \emph{Lyapunov exponent} at $x$.  Given a measure $\mu\in\MMM(X)$ we define the Lyapunov exponent of $\mu$ as
\[
\lambda(\mu) = \int_X \lambda(x) \,d\mu(x).
\]
If $\mu$ is ergodic, then $\lambda(\mu)=\lambda(x)$ for $\mu$-almost every $x\in X$.
\end{definition}

Note that in the case where $X$ is a smooth Riemannian manifold, the definition of conformality may be restated as the requirement that $Df(x)$ is $a(x)$ times some isometry, and the definition of Lyapunov exponent becomes the usual one from smooth ergodic theory.  In particular, if $X$ is one-dimensional, then any differentiable map is conformal.

Denote by $\BB$ the set of all points in $X$ which satisfy (at least) one of the following two conditions.
\begin{description}
\item[(B1)]  \emph{Bounded contraction:}  $\inf \{ S_n(\log a)(f^k(x)) \mid k,n\in\NN \} > -\infty$.  Note that if $a(x) \geq 1$ for all $x\in X$, then $f$ has no contraction whatsoever (although the expansion may not be uniform), and so every point has bounded contraction.
\item[(B2)]  \emph{Lyapunov exponent exists:}  $\llambda(x)=\ulambda(x)$.
\end{description}

The following lemma is proved in~\cite{vC09a}, and shows that we can dynamically generate metric balls using conformal maps without critical points.  We will need this later for the results on $\Da$ in Section~\ref{sec:wkgibbs}.  

\begin{lemma}\label{lem:well-behaved}
Let $X$ be a compact metric space and $f\colon X\to X$ be continuous and conformal with factor $a(x)$.  Suppose that $f$ has no critical points; that is, that $a(x)>0$ for all $x\in X$.  Then given any $x\in \BB$ and $\eps>0$, there exists $\delta=\delta(\eps)>0$ and $\eta=\eta(x)>0$ such that for every $n$,
\begin{equation}\label{eqn:diamball}
B\left(x,\eta\delta e^{-n(\lambda_n(x) + \eps)}\right) \subset B(x,n,\delta) \subset 
B\left(x,\delta e^{-n(\lambda_n(x) -\eps)}\right).
\end{equation}
\end{lemma}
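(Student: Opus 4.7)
The plan is to reduce both inclusions to iterated Birkhoff sums of $\log a$ via a uniform version of~\eqref{eqn:conformal}. First I would upgrade the pointwise conformality to a uniform statement: for each $\eps>0$ there exists $\delta_0=\delta_0(\eps)>0$ such that
\[
e^{-\eps} a(z)\,d(z,w) \leq d(f(z),f(w)) \leq e^{\eps} a(z)\,d(z,w)
\]
for all $z,w\in X$ with $d(z,w)\leq\delta_0$. Absence of critical points on compact $X$ gives $a$ bounded away from $0$ and $\infty$, and hence $\log a$ uniformly continuous; combining the pointwise conformality estimates via a finite $\eps$-net and invoking this uniform continuity produces the uniform $\delta_0$.

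For the right inclusion, fix $\delta\leq\delta_0(\eps)$ and let $y\in B(x,n,\delta)$, so that $d(f^k(x),f^k(y))\leq\delta\leq\delta_0$ for $k=0,\dots,n$. Iterating the lower half of the conformality estimate at $z=f^k(x)$ yields
\[
d(f^n(x),f^n(y)) \geq e^{-n\eps}\prod_{k=0}^{n-1}a(f^k(x))\cdot d(x,y) = e^{n(\lambda_n(x)-\eps)}\,d(x,y),
\]
and combining with $d(f^n(x),f^n(y))\leq\delta$ immediately gives the upper containment.

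For the left inclusion, I would proceed by forward induction on $k$. Assuming the $y$-orbit has stayed within $\delta_0$ of the $x$-orbit through time $k-1$, the upper half of the conformality estimate gives $d(f^k(x),f^k(y))\leq e^{k(\lambda_k(x)+\eps)}\,d(x,y)$. Requiring this to stay below $\delta$ for every $k\leq n$, and parametrising $d(x,y)=\eta\delta e^{-n(\lambda_n(x)+\eps)}$, reduces the inductive step to the scalar inequality
\[
\log\eta \leq \inf_{0\leq k\leq n}\bigl[\,S_{n-k}(\log a)(f^k(x)) + (n-k)\eps\,\bigr],
\]
so what remains is a positive uniform lower bound on $S_m(\log a)(f^j(x))+m\eps$ as $j,m$ range over $\NN$.

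This is precisely what $x\in\BB$ supplies. Under (B1) the infimum of $S_m(\log a)(f^j(x))$ is itself finite by hypothesis, so $\eta=\min(1,e^M)$ works with $M$ the bounded-contraction constant. Under (B2), writing $S_m(\log a)(f^j(x))=S_{j+m}(\log a)(x)-S_j(\log a)(x)$ and $S_n(\log a)(x)=n\lambda+n\alpha_n$ with $\alpha_n\to 0$, I would use boundedness of $\log a$ (which forces the slow variation $|\alpha_{n+1}-\alpha_n|=O(1/n)$) to rule out unbounded negative excursions of these remainders along sub-orbits of $x$, with the extra $+m\eps$ term absorbing the residual slack. I expect this last step to be the main technical obstacle: bare existence of the Lyapunov exponent is weaker than (B1), and one must carefully convert the qualitative convergence $\lambda_n(x)\to\lambda$ into a quantitative bound on shifted sums in order to extract a strictly positive $\eta(x)$.
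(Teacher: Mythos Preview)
The paper does not actually prove this lemma; it cites~\cite{vC09a}. The only trace of an argument in the source is a fragment after \verb|\end{document}| (the draft Lemma~\ref{lem:ballsize}) which treats the right-hand inclusion via exactly the same mechanism you propose: extend $\psi(x,y)=d(f(x),f(y))/d(x,y)-a(x)$ continuously to the diagonal, use compactness to get uniform conformality, and iterate. So for the upper containment and for the inductive reduction of the lower containment to the scalar inequality
\[
\log\eta \;\leq\; \inf_{0\leq k\leq n}\bigl[S_{n-k}(\log a)(f^k(x)) + (n-k)\eps\bigr],
\]
your outline matches the standard argument, and the case \textbf{(B1)} is handled correctly.

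The gap is in your treatment of \textbf{(B2)}. The heuristic you offer --- that $|\alpha_{n+1}-\alpha_n|=O(1/n)$ together with the extra $m\eps$ should prevent unbounded negative excursions of shifted sums --- does not close. Writing $g(n)=S_n(\log a)(x)+n\eps$, your scalar inequality asks for $\sup_{j\leq n}[g(j)-g(n)]<\infty$, and mere convergence $g(n)/n\to\lambda+\eps>0$ with bounded increments does \emph{not} force this. For instance, build $g$ in phases where phase $k$ consists of $2k$ steps of $+2$ followed by $k$ steps of $-1$: then $g(n)/n\to 1$, increments are bounded, $|\alpha_{n+1}-\alpha_n|=O(1/n)$, yet each phase produces a drop of size $k$, so the supremum is infinite and no positive $\eta$ satisfies your inequality. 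Thus bare existence of the Lyapunov exponent is genuinely weaker than the bound you need, and the ``residual slack'' from $m\eps$ does not absorb it. To complete \textbf{(B2)} you will either need a sharper argument than the one-sided iteration (exploiting both conformality bounds, not just the upper one) or to consult the cited reference for the precise mechanism used there; your current sketch does not get there.
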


Using this result, it is shown in~\cite{vC09a} that if $f$ is a conformal map without critical points, then given $Z\subset X$ and $\alpha>0$ such that
\begin{equation}\label{eqn:lyapconst}
\llambda(x) = \ulambda(x) = \alpha
\end{equation}
for every $x\in Z$, the Hausdorff dimension and topological entropy of $Z$ are related by
\begin{equation}\label{eqn:htopdimh}
\dim_H Z = \frac 1\alpha \htop Z.
\end{equation}

Recall that the level sets $\Kab$ for the Birkhoff averages of the geometric potential $\ph = \log a$ are precisely the level sets $\Kal$ for the Lyapunov exponents of $f$, and thus $\LEa = \Ba$ is determined by $\Tb$ using Theorem~\ref{thm:birkhoff}.  Since every point $x\in \Kal$ satisfies~\eqref{eqn:lyapconst}, we may apply~\eqref{eqn:htopdimh} and obtain
\begin{equation}\label{eqn:dimlyap}
\LDa = \frac 1\alpha \LEa
\end{equation}
for all $\alpha>0$.  Thus both Lyapunov spectra can be determined in terms of the Legendre transform of $\Tb$, provided equilibrium states exist and $\Tb$ is differentiable.  We stress that since $\LDa$ is not given by a Legendre transform, but is obtained by a rescaling, it may not be convex---see~\cite{IK09} for examples where this occurs.

\section{Entropy and dimension spectra of weak Gibbs measures}\label{sec:wkgibbs}

The two remaining multifractal spectra with which we are concerned---the entropy spectrum and the dimension spectrum---are both defined in terms of a measure $\mu$.  In order to relate these spectra to the thermodynamic quantities associated with a potential $\ph$, we need a relationship between the local properties of $\mu$ and the Birkhoff averages of $\ph$.  This is provided by the notion of a weak Gibbs measure.

\begin{definition}
Given a compact metric space $X$, a continuous map $f\colon X\to X$, and a potential $\ph\colon X\to \RR$ (not necessarily continuous), we say that a Borel probability measure $\mu$ is a \emph{weak Gibbs measure} for $\ph$ with constant $P\in \RR$ if for every $x\in X$ and $\delta>0$ there exists a sequence $M_n = M_n(x,\delta) > 0$ such that
\begin{equation}\label{eqn:Gibbs}
\frac 1{M_n} \leq \frac{\mu(B(x,n,\delta))}{\exp(-nP + S_n\ph(x))} \leq M_n
\end{equation}
for every $n\in \NN$, where we require the following growth condition on $M_n$ to hold for every $x\in X$:
\begin{equation}\label{eqn:tempered}
\lim_{\delta\to 0} \ulim_{n\to\infty} \frac 1n \log M_n(x,\delta) = 0.
\end{equation}
\end{definition}

There are various definitions in the literature of Gibbs measures of one sort or another; most of these definitions agree in spirit, but differ in some slight details.  We note the differences between the above definition and other definitions in use.
\begin{enumerate}[(1)]
\item The classical definition (see~\cite{rB75}) requires $M_n$ to be bounded, not just to have slow growth, as we require here.  In that case the sequence $M_n$ can be (and is) replaced by a single constant $M$.  The notion of a weak Gibbs measure, for which the constant can vary slowly in $n$, is used in~\cite{mY00,mK01,FO03,JR09}, among others.
\item The above definitions all require the constant $M$ to be independent of $x$, whereas we require no such uniformity.  Furthermore, they are given in terms of cylinder sets rather than Bowen balls; we follow~\cite{VV08} in using the latter, as this is what we need for the multifractal analysis.
\item Certain authors only require~\eqref{eqn:Gibbs} to hold for $\mu$-a.e.\ $x\in X$~\cite{mY00,VV08}.  In order to do the multifractal analysis, we need conditions which hold everywhere, not just almost everywhere, and so we require~\eqref{eqn:Gibbs} for \emph{every} point $x\in X$.
\item Following Kesseb\"ohmer~\cite{mK01}, we do not \emph{a priori} require that a weak Gibbs measure be $f$-invariant.  Weak Gibbs measures exist for \emph{any} continuous function $\ph$ on a one-sided shift space~\cite{mK01}, but it is not the case that such measures can always be taken to be invariant.
\end{enumerate}

We have given the definition in the above form because~\eqref{eqn:Gibbs} is reminiscent of the usual definition of Gibbs measure.  For our purposes, an alternate form of~\eqref{eqn:Gibbs} will be more useful:
\begin{equation}\label{eqn:Gibbs2}
\left\lvert -\frac 1n \log \mu(B(x,n,\delta)) + \frac 1n S_n \ph(x) - P \right\rvert \leq \frac 1n\log M_n(x,\delta) \to 0,
\end{equation}
where the limit is taken as $n\to \infty$ and then as $\delta \to 0$.  Given an invariant weak Gibbs measure, it follows from~\eqref{eqn:Gibbs2} that $h_\mu(x)$ exists if and only if $\ph^+(x)$ exists, and that in this case
\begin{equation}\label{eqn:handph}
h_\mu(x) + \ph^+(x) = P.
\end{equation}

If $\ph$ is continuous, then dimensional arguments from~\cite{yP98} show that $P$ is equal to the topological pressure $P_X(\ph)$, and thus the variational principle shows that it is equal to $P^*(\ph)$.  Integrating~\eqref{eqn:handph} with respect to $\mu$, we obtain $P^*(\ph) = h(\mu) + \int \ph \,d\mu$, hence $\mu$ is an equilibrium state.  Thus a weak Gibbs measure is an equilibrium state, just as in the classical case.

For any equilibrium state, the Brin--Katok entropy formula and the Birkhoff ergodic theorem together imply that~\eqref{eqn:handph} holds almost everywhere with $P=P^*(\ph)$; our definition of weak Gibbs measure boils down to requiring that it hold \emph{everywhere}, without placing any extra requirements on uniformity or rate of convergence.

Writing $\ph_1(x) = \ph(x) - P^*(\ph)$, we observe that
\begin{equation}\label{eqn:levelsets}
\Kab(\ph_1) = K_{-\alpha}^\EEE,
\end{equation}
and we may thus obtain $\Ea$ as a Legendre transform of the following function:
\[
\Te(q) = P^*(q\ph_1).
\]
Once again, convexity of $\Te$ is immediate from the definition of $P^*$.  The following theorem is a direct consequence of Theorem~\ref{thm:birkhoff} and~\eqref{eqn:handph}; because of the change of sign in~\eqref{eqn:levelsets}, we must use the following versions of the Legendre transform:
\begin{equation}\label{eqn:TLSL}
\begin{aligned}
\TTL(\alpha) &= \inf_{q\in\RR} (T(q) + q\alpha), \\
\SSL(q) &= \sup_{\alpha\in\RR} (S(\alpha) - q\alpha).
\end{aligned}
\end{equation}
Note that there is a corresponding change of sign in the definitions of the maps $A$ and $Q$.

\begin{theorem}[The entropy spectrum for local entropies]\label{thm:entropy}
Let $X$ be a compact metric space, $f\colon X\to X$ be continuous, and $\ph\in \Af$.  Then if $\mu$ is a weak Gibbs measure for $\ph$, we have the following:
\begin{enumerate}[I.]
\item $\Te$ is the Legendre transform of the entropy spectrum:
\begin{equation}\label{eqn:TisEL}
\Te(q) = \EL(q) = \sup_{\alpha\in\RR} (\Ea - q\alpha)
\end{equation}
for every $q\in \RR$.
\item The domain of $E$ is bounded by the following:
\begin{align*}
\amin &= \inf \{ \alpha\in \RR \mid \Te(q) \geq -q\alpha \text{ for all } q \}, \\
\amax &= \sup \{ \alpha\in \RR \mid \Te(q) \geq -q\alpha \text{ for all } q \},
\end{align*}
That is, $\Kae = \emptyset$ for every $\alpha<\amin$ and every $\alpha>\amax$.
\item Suppose that $\Te$ is $\CCC^r$ on $(q_1,q_2)$ for some $r\geq 1$, and that for each $q\in(q_1,q_2)$, there exists a (not necessarily unique) equilibrium state $\nu_q$ for the potential function $q\ph_1$.  Let $\alpha_1 = -D^+\Te(q_1)$ and $\alpha_2 = D^-\Te(q_2)$.  Then 
\begin{equation}\label{eqn:EisTL}
\Ea=\TeL(\alpha) = \inf_{q\in\RR} (\Te(q) + q\alpha)
\end{equation}
for all $\alpha\in (\alpha_2,\alpha_1)$; in particular, $E$ is strictly concave on $(\alpha_2,\alpha_1)$, and $\CCC^r$ except at points corresponding to intervals on which $\Te$ is affine.
\end{enumerate}
\end{theorem}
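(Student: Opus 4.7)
The plan is to deduce Theorem~\ref{thm:entropy} directly from Theorem~\ref{thm:birkhoff} applied to the normalised potential $\ph_1 = \ph - P^*(\ph)$, using the weak Gibbs hypothesis as the bridge between entropy and Birkhoff level sets. Since $\ph_1$ differs from $\ph$ by a constant it still lies in $\Af$ (the discontinuity set $\CCC(\ph)$ and the boundedness are both unchanged), so Theorem~\ref{thm:birkhoff} applies verbatim to $\ph_1$ and produces information about $q \mapsto P^*(q\ph_1) = \Te(q)$ and about the Birkhoff spectrum $\beta \mapsto \htop \Kb{\beta}(\ph_1)$.

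The level-set identification comes from the pointwise identity~\eqref{eqn:handph}. From the alternate form~\eqref{eqn:Gibbs2} of the weak Gibbs property, together with the subexponential growth of $M_n(x,\delta)$ provided by~\eqref{eqn:tempered}, one obtains at \emph{every} $x \in X$ that $h_\mu(x)$ exists if and only if $\ph^+(x)$ exists, and that in this case $h_\mu(x) + \ph^+(x) = P$. The dimensional arguments recalled in the excerpt identify $P$ with $P^*(\ph)$, so $h_\mu(x) = -\ph_1^+(x)$ wherever either side is defined. This yields the level-set equality $\Kae = \Kb{-\alpha}(\ph_1)$, and hence $\Ea = \htop \Kb{-\alpha}(\ph_1)$.

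With this identification in hand, the three parts of Theorem~\ref{thm:entropy} are obtained by the substitution $\beta = -\alpha$ in the corresponding parts of Theorem~\ref{thm:birkhoff} applied to $\ph_1$. For Part I, $\Te(q) = \sup_\beta(\htop \Kb{\beta}(\ph_1) + q\beta) = \sup_\alpha(\Ea - q\alpha) = \EL(q)$. For Part II, the condition $P^*(q\ph_1) \geq q\beta$ for all $q$ becomes $\Te(q) \geq -q\alpha$ for all $q$, producing exactly the stated $\amin$ and $\amax$. For Part III, the interval $(D^+\Te(q_1), D^-\Te(q_2))$ for $\beta$ corresponds to $(-D^-\Te(q_2), -D^+\Te(q_1)) = (\alpha_2, \alpha_1)$, and the identity $\htop \Kb{\beta}(\ph_1) = \inf_q(\Te(q) - q\beta)$ becomes $\Ea = \inf_q(\Te(q) + q\alpha) = \TeL(\alpha)$ under the sign convention of~\eqref{eqn:TLSL}; strict concavity and $\CCC^r$ regularity transfer unchanged through the substitution.

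The main obstacle is the pointwise identity in the second paragraph: Brin--Katok together with Birkhoff's ergodic theorem would only give $h_\mu(x) + \ph^+(x) = P$ on a $\mu$-conull set, but the multifractal decomposition is sensitive to null sets, so a $\mu$-almost-everywhere statement is useless for identifying $\Kae$ with $\Kb{-\alpha}(\ph_1)$. The power of the weak Gibbs hypothesis is precisely that~\eqref{eqn:Gibbs} holds at \emph{every} point, and the tempered growth~\eqref{eqn:tempered} is just slack enough to preserve equality of both the $\liminf$ and the $\limsup$ of $-\frac{1}{n}\log\mu(B(x,n,\delta))$ and $\frac{1}{n} S_n\ph(x)$ under the double limit $n\to\infty$, $\delta\to 0$. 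Once this pointwise reduction is secure, the rest of the proof is purely symbolic and is driven entirely by Theorem~\ref{thm:birkhoff}.
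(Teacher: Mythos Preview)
Your proposal is correct and matches the paper's approach exactly: the paper states that Theorem~\ref{thm:entropy} is a direct consequence of Theorem~\ref{thm:birkhoff} and the pointwise identity~\eqref{eqn:handph}, via the level-set equality~\eqref{eqn:levelsets} and the sign-flipped Legendre transform~\eqref{eqn:TLSL}. Your discussion of why the weak Gibbs hypothesis is needed at \emph{every} point (rather than $\mu$-a.e.) is precisely the point the paper emphasises in its definition of weak Gibbs measure, and your handling of the sign change $\beta = -\alpha$ in each of the three parts is exactly the substitution the paper has in mind.
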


In the case where $f$ is conformal, we prove the analogous result for the dimension spectrum.  We will need to eliminate points at which the Birkhoff averages of $\log a$ cluster around zero along a sequence of times at which the local entropy of $\mu$ is also negligible; that is, the following set:
\begin{equation}\label{eqn:Z}
\ZZZ(\mu) = \left\{ x\in X \,\Big|\, \lim_{\delta\to 0} \llim_{n\to\infty} 
\left\lvert \frac 1n \log\mu(B(x,n,\delta)) \right\rvert + \left\lvert \frac 1n S_n \log a(x) \right\rvert = 0 
\right\}.
\end{equation}
When $\mu$ is a weak Gibbs measure for $\ph$, we have
\begin{equation}\label{eqn:Z2}
\ZZZ(\mu) = \left\{ x\in X \,\Big|\, \llim_{n\to\infty} 
\left\lvert \frac 1n S_n \ph_1(x) \right\rvert + \left\lvert \frac 1n S_n \log a(x) \right\rvert = 0 
\right\}.
\end{equation}
In the context of Theorem~\ref{thm:dimension}, we will suppress the dependence on $\mu$ and simply write $\ZZZ = \ZZZ(\mu)$.  We will see that the set $\ZZZ$ contains all points $x$ for which $\llambda(x) = 0$ but $\ud_\mu(x)<\infty$; these are the only points our methods cannot deal with.  In many cases we do not lose much by neglecting them; for example, if $\sup \ph - \inf \ph < h(\mu)$, then
\[
\ulim_{n\to\infty} \frac 1n S_n \ph_1(x) < 0
\]
for every $x\in X$, and so $\ZZZ = \emptyset$.  Even in cases when $\ZZZ$ is non-empty, it often has zero Hausdorff dimension~\cite{JR09}.

The remaining set of ``good'' points will be denoted by
\begin{equation}\label{eqn:X'}
X' = X \setminus \ZZZ.
\end{equation}
In the definition of $\Da$, we adopt the convention that $\Da = -\infty$ if $\Kad \subset \ZZZ$.  Since there may be points at which $\mu$ has infinite pointwise dimension, we also include the value $\alpha=+\infty$ in~\eqref{eqn:TLSL}, and follow the convention that if $K_\infty^\DDD \cap X' \neq \emptyset$, then $\DL(q) = +\infty$ for all $q<0$.

Now consider the centred potential $\ph_1(x) = \ph(x) - P^*(\ph)$.  Define a family of potentials by
\begin{equation}\label{eqn:conformal2}
\ph_{q,t}(x) = q\ph_1(x) - t\log a(x).
\end{equation}
We will be particularly interested in the potentials with zero pressure; we would like to define a function $\Td(q)$ by the equation
\begin{equation}\label{eqn:Pzero}
P^*\left(\ph_{q,\Td(q)}\right) = 0.
\end{equation}
Formally, we write
\begin{equation}\label{eqn:Td}
\Td(q) = \inf \{ t\in\RR \mid P^*(\ph_{q,t}) \leq 0 \} = \sup\{t\in \RR \mid P^*(\ph_{q,t}) > 0\};
\end{equation}
by continuity of $P^*$, $\Td(q)$ solves~\eqref{eqn:Pzero} if it is finite, but is not necessarily the unique solution of~\eqref{eqn:Pzero}.  (Indeed, there may be values of $q$ for which $P^*(\ph_{q,t}) = 0$ for all $t>\Td(q)$.)

For $\Td(q)<\infty$ we write $\ph_q = \ph_{q,\Td(q)}$, and observe that~\eqref{eqn:Pzero} may be written as $P^*(\ph_q)=0$.

Given $\eta>0$ and $I_Q\subset \RR$, we will need to consider the following region lying just under the graph of $\Td(q)$:
\[
R_\eta(I_Q) = \{(q,t)\in \RR^2 \mid q\in I_Q, \Td(q) - \eta < t < \Td(q) \}.
\]

We can now state a general result regarding the dimension spectrum.

\begin{theorem}[The dimension spectrum for pointwise dimensions]\label{thm:dimension}
Let $X$ be a compact metric space with $\dim_H X < \infty$, and let $f\colon X\to X$ be continuous and conformal with continuous non-vanishing factor $a(x)$.  Suppose that $\BB=X$ and that $\lambda(\nu)\geq 0$ for every $\nu\in \Mf(X)$.  Let $\mu\in \Mf(X)$ be a weak Gibbs measure for a continuous potential $\ph$.  Finally, suppose that $\dim_H \ZZZ = 0$.  Then we have the following.
\begin{enumerate}[I.]
\item $\Td$ is the Legendre transform of the dimension spectrum:
\begin{equation}\label{eqn:TisDL}
\Td(q) = \DL(q) = \sup_{\alpha\in\RR} (\Da - q\alpha)
\end{equation}
for every $q\in \RR$.
\item Neglecting points in $\ZZZ$, the domain of $\DDD$ is bounded by the following:
\begin{align*}
\amin &= \inf \{ \alpha\in \RR \mid \Td(q) \geq -q\alpha \text{ for all } q \}, \\
\amax &= \sup \{ \alpha\in \RR \mid \Td(q) \geq -q\alpha \text{ for all } q \},
\end{align*}
That is, $\Kad \cap X' = \emptyset$ for every $\alpha<\amin$ and every $\alpha>\amax$.
\item Suppose $I_Q = (q_1,q_2)$ and $\eta>0$ are such that for every $(q,t)\in R_\eta(I_Q)$, the potential $\ph_{q,t}$ has a (not necessarily unique) equilibrium state, and that the map $(q,t) \mapsto P^*(\ph_{q,t})$ is $\CCC^r$ on $R_\eta(I_Q)$ for some $r\geq 1$.  Then we have
\begin{equation}\label{eqn:DisTL}
\Da = \TdL(\alpha) = \inf_{q\in\RR} (\Td(q) + q\alpha)
\end{equation}
for all $\alpha\in (\alpha_2,\alpha_1) = A(I_Q)$; in particular, $\DDD$ is strictly concave on $(\alpha_2,\alpha_1)$, and $\CCC^r$ except at points corresponding to intervals on which $\Td$ is affine.
\end{enumerate}
\end{theorem}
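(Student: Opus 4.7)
The plan is to reduce the dimension multifractal analysis to a Birkhoff-type computation by using the weak Gibbs property and the conformal structure in tandem. The first observation is that for every $x \in X'$ at which $\lambda(x)$ and $\ph^+(x)$ both exist,
\[
d_\mu(x) = -\frac{\ph_1^+(x)}{\lambda(x)}:
\]
setting $\eps_n = \delta e^{-n\lambda_n(x)}$, Lemma~\ref{lem:well-behaved} pinches $B(x,\eps_n)$ between two Bowen balls whose logarithmic diameters differ by $o(n)$, while \eqref{eqn:Gibbs2} identifies $-\tfrac{1}{n}\log\mu(B(x,n,\delta))$ with $-\tfrac{1}{n}S_n\ph_1(x)$ up to the subexponential weak-Gibbs error. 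Taking the ratio gives the formula, and excising $\ZZZ$ keeps the denominator nondegenerate. The level set $\Kad \cap X'$ is therefore captured by the asymptotic relation $S_n\ph_1(x) + \alpha S_n \log a(x) = o(n\lambda_n(x))$, which is the hook on which the family $(q,t)\mapsto P^*(\ph_{q,t})$ hangs.

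For Part~I, I prove $\DL(q) \leq \Td(q)$ via a direct Hausdorff-measure computation. Fix $q$ and $\alpha$ with $\Kad\cap X' \neq \emptyset$ and choose $s > \Td(q) + q\alpha$; by \eqref{eqn:Td} and continuity of $P^*$, we have $P^*(\ph_{q,s-q\alpha}) < 0$. Cover $\Kad\cap X'$ by Bowen balls $B(x_i,n_i,\delta)$ with centres $x_i \in \Kad$ and $n_i \geq N$; Lemma~\ref{lem:well-behaved} bounds $(\diam B(x_i,n_i,\delta))^s$ by $\delta^s e^{-sn_i(\lambda_{n_i}(x_i) - \eps)}$, and the previous paragraph's identity rewrites this exponent as $S_{n_i}\ph_{q,s-q\alpha}(x_i) + o(n_i)$ on the level set. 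The $s$-Hausdorff sum is thus dominated by a partition function for $\ph_{q,s-q\alpha}$, which by \eqref{eqn:pressures} and the variational principle is controlled by $\lP_X(\ph_{q,s-q\alpha}) < 0$ and hence tends to zero as $N \to \infty$. This gives $m_H(\Kad\cap X',s) = 0$ and so $\Da \leq \Td(q) + q\alpha$, proving Part~I. Part~II is then immediate: if $\alpha \notin [\amin,\amax]$ no supporting line of $\Td$ exists at $\alpha$, so Part~I forces $\Da = -\infty$, i.e.\ $\Kad \cap X' = \emptyset$.

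For Part~III, the $\CCC^r$ hypothesis on $(q,t)\mapsto P^*(\ph_{q,t})$ on $R_\eta(I_Q)$ combined with existence of equilibrium states and the nonvanishing of $\partial_t P^*(\ph_{q,t}) = -\lambda(\nu_{q,t})$ lets me apply the implicit function theorem and conclude that $\Td \in \CCC^r(I_Q)$ with
\[
\Td'(q) = \frac{\int \ph_1 \,d\nu_q}{\lambda(\nu_q)}, \qquad h(\nu_q) = \lambda(\nu_q)\bigl(\Td(q) - q\Td'(q)\bigr),
\]
the second relation coming from unpacking $0 = P^*(\ph_q) = h(\nu_q) + q\int \ph_1\,d\nu_q - \Td(q)\lambda(\nu_q)$. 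Passing to an ergodic component of $\nu_q$ (still an equilibrium state), the Birkhoff theorem places $\nu_q$-almost every point in $\Kad$ with $\alpha = -\Td'(q)$ and Lyapunov exponent $\lambda(\nu_q) > 0$. Restricting to the common-exponent subset $Z \subset \Kad$, Bowen's entropy lower bound gives $\htop Z \geq h(\nu_q)$, and \eqref{eqn:htopdimh} converts this to
\[
\Da \geq \dim_H Z \geq \frac{h(\nu_q)}{\lambda(\nu_q)} = \Td(q) - q\Td'(q) = \TdL(\alpha).
\]
Combined with Part~I applied at this $q$, this yields Part~III on $A(I_Q)$; strict concavity and $\CCC^r$-regularity of $\Da$ off the affine stretches of $\Td$ then transfer from standard Legendre duality.

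The hard part is coordinating the weak Gibbs error $M_n(x,\delta)$ across distinct centres in the Hausdorff cover of Step~2: the growth condition \eqref{eqn:tempered} is pointwise but the pressure argument demands uniform subexponential control. I would handle this by partitioning $\Kad \setminus \ZZZ$ into a countable union of compact pieces on which $\tfrac{1}{n}\log M_n$ is uniformly bounded (a Lusin/Egorov-type extraction), applying the pressure computation to each piece, and absorbing the exceptional residue into $\ZZZ$, whose zero-dimension hypothesis makes it invisible to $\dim_H$. A secondary delicacy is excluding $\lambda(\nu_q) = 0$ in Part~III; this is the role of the standing assumption $\lambda(\nu) \geq 0$ for every invariant $\nu$, which combined with the $\CCC^r$ regularity on $R_\eta(I_Q)$ enforces strict positivity of $\lambda$ on the equilibrium states relevant to the argument, since otherwise $P^*(\ph_{q,t})$ would be tangent to its zero set in the $t$-direction, violating the implicit $\CCC^r$-graph structure that makes $\Td$ finite and smooth on $I_Q$.
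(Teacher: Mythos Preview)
Your proposal has a genuine gap in Part~I: you only establish one direction of the Legendre duality. Your Hausdorff-cover argument yields $\Da \leq \Td(q) + q\alpha$ for all $q,\alpha$, which is $\DL(q) \leq \Td(q)$. But Part~I asserts equality $\Td(q) = \DL(q)$ \emph{without} the additional hypotheses of Part~III, so you also need $\Td(q) \leq \DL(q)$. This is not supplied anywhere in your write-up: Part~III does give a matching lower bound on $\Da$, but only on $A(I_Q)$ and only under the extra differentiability and equilibrium-state assumptions. The paper proves $\Td \leq \DL$ by a measure-theoretic argument that uses no such hypotheses: for any $t < \Td(q)$ one has $P^*(\ph_{q,t}) > 0$, so there is an ergodic $\nu$ with $h(\nu) + q\int\ph_1\,d\nu - t\lambda(\nu) > 0$; if $\lambda(\nu) > 0$ one divides through and invokes Young's formula $\dim_H \nu = h(\nu)/\lambda(\nu)$ together with $\nu(K_{\alpha_\nu}^\DDD) = 1$ (where $\alpha_\nu = -\int\ph_1\,d\nu / \lambda(\nu)$) to conclude $\DDD(\alpha_\nu) - q\alpha_\nu > t$, hence $\DL(q) > t$. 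The zero-exponent case is handled separately using $\dim_H X < \infty$. You need some version of this argument.

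Your route to the upper bound $\DL \leq \Td$ is genuinely different from the paper's and worth noting. The paper does not cover $\Kad$ directly; instead it passes to the uniform approximate level sets $\Gaen = \{x : |{-S_n\ph_1}/{S_n\log a} - \alpha| \leq \eps, \ S_n\log a > 0, \ n \geq N\}$ and applies Lemma~\ref{lem:buildmeasure} (the measure-construction lemma) with $\zeta = -(t+q\alpha)\log a$ to manufacture an invariant measure $\nu$ with the right integral constraints and large free energy, from which $P^*(\ph_{q,t}) > 0$ follows. Your direct cover-by-Bowen-balls approach is closer to classical Pesin--Weiss arguments and avoids the measure-construction machinery, but the uniformity issue you flag (coordinating the pointwise $o(n)$ errors across centres) is exactly what the paper's passage to $\Gaen$ sidesteps: on $\Gaen$ the relation $S_n\ph_1 + \alpha S_n\log a = o(n)$ is uniform \emph{by definition}, and countable stability of topological pressure handles the union over $N$. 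Your Lusin/Egorov extraction would likely work but is less clean than the paper's approximate-level-set stratification.

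For Part~III, there is a secondary gap: you invoke an equilibrium state $\nu_q$ for $\ph_q = \ph_{q,\Td(q)}$, but the hypotheses only guarantee existence of equilibrium states for $(q,t) \in R_\eta(I_Q)$, i.e.\ strictly below the graph of $\Td$. The paper handles this by taking sequences $(q_n,t_n) \to (q,\Td(q))$ with $t_n < \Td(q_n)$, where one does have equilibrium states $\nu_n$ with $\lambda(\nu_n) > 0$ (positivity follows because $P^*(\ph_{q_n,t_n}) > 0$ and $\Td(q_n) < \infty$), and passes to the limit in the inequality $\Da \geq q_n\alpha + t_n$. Your implicit-function-theorem computation is correct on $R_\eta(I_Q)$, and the paper uses essentially the same IFT argument applied to level curves $P^*(\ph_{q,T_n(q)}) = 1/n$ rather than to the zero set itself; but you cannot simply evaluate at the boundary without a limiting argument.
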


We will see in the proof that the requirement on existence of equilibrium states for $\ph_{q,t}$ with $(q,t)\in R_\eta(I_Q)$ can be replaced by the condition that there exist equilibrium states $\nu_q$ for $\ph_q = \ph_{q,\Td(q)}$ such that $\lambda(\nu_q) > 0$.  However, such measures do not necessarily exist, while upper semi-continuity of the entropy is enough to guarantee the existence of the measures required in the theorem.

If we do have equilibrium states $\nu_q$ with $\lambda(\nu_q) > 0$, then in Part III of the theorem, the requirement that $(q,t)\mapsto P^*(\ph_{q,t})$ be $\CCC^r$ on $R_\eta(I_Q)$ can be replaced by the condition that $\Td$ be $\CCC^r$ on $I_Q$.

\section{Remarks}\label{sec:rmk}

We first discuss conditions under which the hypotheses of Theorem~\ref{thm:birkhoff} and the results in Section~\ref{sec:gen} are satisfied, before turning our attention to weak Gibbs measures and Theorem~\ref{thm:entropy}, and finally the more delicate case of Theorem~\ref{thm:dimension}.  Throughout this section, $T$ will refer to any or all of $\Tb$, $\Te$, and $\Td$, as needed.  We make general remarks in this section, deferring specific examples and applications until Section~\ref{sec:app}.

\subsection{Birkhoff spectrum---continuous potentials}

Parts I and II of Theorem~\ref{thm:birkhoff} and~\ref{thm:entropy} do not place any thermodynamic requirements on the function $T=\Tb$, and thus hold in full generality.

There are two thermodynamic requirements in Part III---existence of an equilibrium state, and differentiability of $T$.  The latter is used in order to guarantee the existence of values $q\in \RR$ for which $\Tb'(q)$ exists, and hence $A(q) = \{\Tb'(q)\}$ is a singleton.  In fact, because $T$ is continuous and convex, $A(q)$ is a singleton for all but at most countably many values of $q$, and consequently, once existence of equilibrium states is established, it follows that the Birkhoff spectrum is equal to the Legendre transform of the pressure function everywhere except possibly on some countable union of intervals, on each of which that Legendre transform is affine and gives an upper bound for $\Ba$.

Existence of equilibrium states is easy to verify in the following rather common setting.

\begin{definition}
The entropy map $\mu\mapsto h(\mu)$ is \emph{upper semi-continuous} if for every sequence $\mu_n\in\MMM(X)$ which converges to $\mu$ in the weak* topology, we have
\[
\ulim_{n\to\infty} h(\mu_n) \leq h(\mu).
\]
\end{definition}

If the entropy map is upper semi-continuous and $\ph$ is continuous, then the map
\[
\mu \mapsto h(\mu) + \int q\ph \,d\mu
\]
is upper semi-continuous for every $q\in\RR$, and thus attains its maximum.  In particular, there exists an equilibrium state for every $q\ph$.

\begin{definition}
$f$ is \emph{expansive} if there exists $\eps>0$ such that for all $x\neq y$ there exists $n\in \ZZ$ (if $f$ is invertible) or $n\in \NN$ (if $f$ is non-invertible) such that $d(f^n(x),f^n(y))\geq \eps$.
\end{definition}

For expansive homeomorphisms, the entropy map $\mu\mapsto h_\mu(f)$ is upper semi-continuous~\cite[Theorem 8.2]{pW75},\foot{What happens if $f$ is non-invertible?} and so existence is guaranteed for continuous $\ph$.  Similarly, the entropy map is upper semi-continuous for $\CCC^\infty$ maps of compact smooth manifolds~\cite{sN89}, and we once again get existence for free.

\begin{proposition}\label{prop:unique-works}
Let $X$ be a compact metric space, $f\colon X\to X$ a continuous map, and $\ph\in \Af$.  Suppose that the entropy map is upper semi-continuous and that there exists an interval $(q_1,q_2)\subset \RR$ such that for every $q\in (q_1,q_2)$, the potential $q\ph$ has a unique equilibrium state.  Then $\Tb$ is $\CCC^1$ on $(q_1,q_2)$.
\end{proposition}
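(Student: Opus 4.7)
The plan is to exploit the general principle from convex analysis that a convex function on an open interval is $\CCC^1$ as soon as it is everywhere differentiable there (monotonicity of the derivative plus absence of jumps forces continuity). Thus it suffices to show that, under the given hypotheses, $\Tb$ has matching one-sided derivatives $D^-\Tb(q)=D^+\Tb(q)$ at every $q\in(q_1,q_2)$. Recall that $\Tb(q)=P^*(q\ph)$ is convex, being a supremum of affine functions of $q$, so the one-sided derivatives exist at every interior point and coincide with the endpoints of the subdifferential.

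The first step is to observe that every equilibrium state $\nu$ of $q\ph$ provides a subgradient of $\Tb$ at $q$ with value $\int\ph\,d\nu$: for any $q'\in\RR$,
\[
\Tb(q')\ge h(\nu)+q'\!\int\ph\,d\nu = \Tb(q)+(q'-q)\!\int\ph\,d\nu,
\]
so the affine function on the right is a supporting line. Consequently $\int\ph\,d\nu\in[D^-\Tb(q),D^+\Tb(q)]$.

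The crux is the converse: both endpoints of that interval are actually attained by an equilibrium state. To realise $D^+\Tb(q)$, pick $q_n\downarrow q$ in $(q_1,q_2)$ and let $\nu_n$ be the unique equilibrium state for $q_n\ph$. By weak* compactness of $\MMM(X)$ and closedness of $\Mf(X)$, pass to a subsequence with $\nu_n\to\nu\in\Mf(X)$. Upper semi-continuity of the entropy map, continuity of $q\mapsto \Tb(q)$ (convex functions are continuous on the interior of where they are finite), and Proposition~\ref{prop:convergence} (which ensures $\int\ph\,d\nu_n\to\int\ph\,d\nu$ since $\ph\in\Af$) give
\[
\Tb(q)=\lim_n\Tb(q_n)=\lim_n\Bigl(h(\nu_n)+q_n\!\int\ph\,d\nu_n\Bigr)\le h(\nu)+q\!\int\ph\,d\nu\le\Tb(q),
\]
so $\nu$ is an equilibrium state for $q\ph$. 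Uniqueness then forces $\nu=\nu_q$. On the other hand each $\int\ph\,d\nu_n$ lies in $[D^-\Tb(q_n),D^+\Tb(q_n)]$, and right-continuity of $D^+\Tb$ (together with $D^-\Tb(q_n)\le D^+\Tb(q)\le D^+\Tb(q_n)$) forces both endpoints to converge to $D^+\Tb(q)$. Hence $\int\ph\,d\nu_q=D^+\Tb(q)$. An identical argument with $q_n\uparrow q$ yields $\int\ph\,d\nu_q=D^-\Tb(q)$, and uniqueness of $\nu_q$ now forces $D^-\Tb(q)=D^+\Tb(q)$.

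The main obstacle is this attainment step: we must upgrade the a priori inclusion $\int\ph\,d\nu_q\in[D^-\Tb(q),D^+\Tb(q)]$ into the two separate identifications with the endpoints, which is precisely where upper semi-continuity of entropy and the class $\Af$ (via Proposition~\ref{prop:convergence}) enter. Once differentiability at every $q\in(q_1,q_2)$ is in hand, the convex-analytic fact noted at the outset promotes it to $\CCC^1$, completing the proof.
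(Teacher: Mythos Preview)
Your argument is correct and follows essentially the same route as the paper: approximate $q$ from each side by $q_n$, take weak* limits of the (unique) equilibrium states $\nu_n$, use upper semi-continuity of entropy together with Proposition~\ref{prop:convergence} to see that each limit is an equilibrium state for $q\ph$, and then invoke uniqueness to force $D^-\Tb(q)=D^+\Tb(q)$. The only cosmetic difference is that the paper identifies $\int\ph\,d\mu^\pm$ with the one-sided derivatives via Ruelle's formula (Proposition~\ref{prop:ruelle}) applied along the approximating sequence, whereas you squeeze $\int\ph\,d\nu_n$ using the subgradient interval $[D^-\Tb(q_n),D^+\Tb(q_n)]$ and standard one-sided continuity of the one-sided derivatives of a convex function; your version has the small advantage of not needing differentiability of $\Tb$ at the approximating points $q_n$.
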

\begin{proof}
Suppose for a contradiction that the pressure function $q\mapsto P^*(q\ph)$ is not differentiable at $q_0\in (q_1,q_2)$.  Let $\mu_n^-$ be the unique equilibrium state for $(q-\frac 1n)\ph$, and let $\mu^-$ be a weak* limit of some subsequence $\mu_{n_j}^-$.  By upper semi-continuity and Proposition~\ref{prop:convergence} below, we have
\begin{multline*}
h(\mu^-) + \int q\ph\,d\mu^- \geq \ulim_{n_j\to\infty} h(\mu_{n_j})+ \int q\ph\,d\mu_{n_j}^- \\
= \ulim_{n_j\to\infty} P^*\left(\left(q-\frac 1{n_j}\right)\ph\right) = P^*(q\ph).
\end{multline*}
Thus $\mu^-$ is an equilibrium state for $q\ph$ with
\[
\int q\ph\,d\mu^- = D^-\Tb(q) = \lim_{q'\to q^-} \Tb'(q')
\]
by Proposition~\ref{prop:ruelle} below.  Similarly, one can construct an equilibrium state $\mu^+$ such that $\int q\ph\,d\mu^+$ is the right derivative of $\Tb$ at $q$.  If the two derivatives do not agree, then we have two distinct equilibrium states for $q\ph$, a contradiction.
\end{proof}

Using Proposition~\ref{prop:unique-works}, one approach to verifying the hypotheses of Theorem~\ref{thm:birkhoff} for a map with upper semi-continuous entropy is to show that the equilibrium state for each $q\ph$ is unique.

We also observe that in the context of Part III of Theorem~\ref{thm:birkhoff}, the construction in the proof above gives equilibrium states for $q_1\ph$ and $q_2\ph$ that are supported on the sets $K_{\alpha_1}^\BBB$ and $K_{\alpha_2}^\BBB$, respectively, and which establish~\eqref{eqn:BisTL} for the endpoints $\alpha_1$ and $\alpha_2$, just as in the proof of Proposition~\ref{prop:concave} below.


\subsection{Birkhoff spectrum---discontinuous potentials}

If $\ph$ is discontinuous, the map from $\MMM(X)$ to $\RR$ defined by
\begin{equation}\label{eqn:intph}
\mu\mapsto \int \ph\,d\mu
\end{equation}
is not continuous on all of $\MMM(X)$.  For discontinuous potentials lying in $\Af$, continuity still holds at measures in $\Mf(X)$ by Proposition~\ref{prop:convergence} below, which suffices for all the proofs here.

However, if $\ph\notin \Af$, then there may be invariant measures at which the map is discontinuous.  In particular, if $\mu(\CCC(\ph)) > 0$, then the map in~\eqref{eqn:intph} is discontinuous at $\mu$.  If $\ph$ is unbounded, then it is relatively straightforward to show that the map is not continuous at \emph{any} measure in $\MMM(X)$.  In many cases, it is not even enough to restrict our attention to invariant measures~\cite[Proposition 2.8]{BK98}.  Thus for $\ph\notin \Af$, upper semi-continuity of the entropy is not enough to guarantee existence of equilibrium states without further information.


For potentials which are bounded above but not below, we observe in Proposition~\ref{prop:singularity} that the map in~\eqref{eqn:intph} is upper semi-continuous, and thus the free energy function $\mu\mapsto h(\mu) + \int \ph\,d\mu$ is upper semi-continuous as well.  It follows that it attains its maximum, and we once again are guaranteed existence.  This is also enough to prove Proposition~\ref{prop:unique-works} for these potentials, showing that existence and uniqueness imply differentiability of the pressure function (for the appropriate sign of $q$) if the entropy map is upper semi-continuous.


\subsection{Entropy spectrum---weak Gibbs measures}

There are many cases in which equilibrium states are known to have the weak Gibbs property~\eqref{eqn:Gibbs} or one which implies it.  For example, equilibrium states for H\"older continuous potentials on uniformly hyperbolic systems are known to be Gibbs, as are equilibrium states for potentials satisfying a certain regularity property on expansive maps with specification~\cite{TV99}.  Finally, Kesseb\"ohmer proves the existence of weak Gibbs measures for continuous potentials on symbolic space~\cite{mK01} (these measures are studied by Jordan and Rams~\cite{JR09} on parabolic interval maps).

Given a weak Gibbs measure, all the above remarks regarding the Birkhoff spectrum apply to the entropy spectrum.

\subsection{Dimension spectrum}

Because of the geometric implications of any result regarding the dimension spectrum, we must deal with a more restricted class of systems.  In particular, the present approach is completely dependent upon conformality of the map $f$; without conformality, we have no analogue of Lemma~\ref{lem:well-behaved} or Proposition~\ref{prop:localdim}.  If analogues of these can be found in the non-conformal case, then it may be possible to establish a non-conformal version of the present result; however, this appears to require the use of a non-additive version of the thermodynamic formalism~\cite{lB96,FH10}.

We also presently lack the tools to deal with maps with critical points.  To establish an analogue of Lemma~\ref{lem:well-behaved} for such maps would require an estimate on the rate of recurrence of fairly arbitrary orbits to the critical point in order to control the distortion.

The other hypotheses in Theorem~\ref{thm:dimension} are less restrictive, and are satisfied for quite general classes of maps.  We discuss them briefly.

$\BBB = X$.  If $a(x)\geq 1$ for all $x$, then this is automatically satisfied; we do not need $a(x)>1$, nor any uniformity, and so the class of systems with this property includes Manneville--Pomeau maps and parabolic rational maps.  Due to recurrence of the critical point, bounded contraction \emph{per se} cannot be expected to hold for maps with critical points; however, the requirement of bounded contraction can in fact be weakened slightly to include cases where the absolute value of the quantity in \textbf{(B1)} is not bounded, but grows sublinearly in $n+k$, which corresponds to a certain sort of slow recurrence.  This approach, however, has yet to bear fruit.

$\dim_H \ZZZ = 0$.  Points at which $\lambda(x) = 0$ and $d_\mu(x)<\infty$ are problematic for various reasons, and so we want to avoid having to deal with them.  Since all such points lie in the set $\ZZZ$, we can do this by neglecting $\ZZZ$ in all our computations, and it turns out that this is not a very heavy price to pay.  Of course if $f$ is uniformly expanding, this set is empty, but even in the non-uniformly expanding case, it is shown in~\cite{JR09} that $\ZZZ$ has zero Hausdorff dimension for a class of parabolic interval maps.

We also observe that if the entropy map is upper semi-continuous, then existence of equilibrium states for $\ph_{q,t}$ is guaranteed for all $q,t\in \RR$, and that uniqueness is again enough to establish differentiability of the map $(q,t)\mapsto P^*(\ph_{q,t})$, and hence to apply Theorem~\ref{thm:dimension}.

\section{Applications}\label{sec:app}

Before proceeding to the proofs of the theorems, we give several concrete applications.

\subsection{Birkhoff spectrum}

The first two parts of Theorem~\ref{thm:birkhoff} do not require \emph{any} hypotheses on the map $f$ beyond continuity, and so for every continuous map $f$ and every potential $\ph\in \Af$, the pressure function $\Tb$ is the Legendre transform of $\Ba$ (and hence $\TbL$ is the concave hull of $\Ba$), and the domain of the Birkhoff spectrum is the interval $[\amin,\amax]$.

Similar but weaker statements hold for arbitrary bounded measurable potentials $\ph$, using Theorem~\ref{thm:high-entropy}, and for potentials with singularities using Theorem~\ref{thm:singularity}.

To apply the full strength of these three theorems beyond the general remarks made so far, we need some thermodynamic information about the system.

\subsubsection{Uniform hyperbolicity}

In~\cite{rB75}, Bowen showed that if $M$ is a $\CCC^\infty$ Riemannian manifold and $f\colon M\to M$ is an Axiom A diffeomorphism, then any H\"older continuous potential function $\ph\colon M\to\RR$ has a unique equilibrium state.  Since such maps are expansive on the hyperbolic set~\cite[Corollary 6.4.10]{KH95}, this suffices to check the hypotheses of Theorem~\ref{thm:birkhoff}, as shown in the previous section, and hence the Birkhoff spectrum is equal to the Legendre transform of the pressure function:  in particular, it is concave and $\CCC^1$ (see Figure~\ref{fig:no-phase-transition}).  Versions of this result may be extracted from the results in~\cite{TV99,PW01}, but Theorem~\ref{thm:birkhoff} provides a more direct proof.

Non-H\"older potentials were studied by Pesin and Zhang in~\cite{PZ06} (see also~\cite{hH08}).  They consider a uniformly piecewise expanding full-branched Markov map $f$ of the unit interval, and use inducing schemes and tools from the theory of countable Markov shifts to study the existence and uniqueness of equilibrium states for a large class of potentials.  In particular, they give the following example of a non-H\"older potential:
\begin{equation}\label{eqn:nonHolder}
\ph(x) = \begin{cases}
-(1-\log x)^{-\alpha} & x\in (0,1], \\ 0 & x=0. \end{cases}
\end{equation}
It is shown in~\cite{PZ06} that for any $\alpha>1$ and $q\in \RR$, the potential $q\ph$ has a unique equilibrium state.  Since $f$ is expansive, by the comments in the previous section this suffices to check the hypotheses of Theorem~\ref{thm:birkhoff}, and we have the following result.

\begin{proposition}\label{prop:nonHolder}
Let $f$ be a uniformly piecewise expanding full-branched Markov map of the unit interval, and let $\ph$ be the potential function given in~\eqref{eqn:nonHolder}, $\alpha>1$.  Then the Birkhoff spectrum $\Ba$ is smooth and concave, has domain $[\amin,\amax]$, and is the Legendre transform of $\Tb$.
\end{proposition}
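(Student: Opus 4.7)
The plan is to verify the hypotheses of Theorem~\ref{thm:birkhoff} at every $q\in \RR$ and then read off the conclusion. The argument proceeds in three stages: checking that $\ph$ lies in $\Af$, establishing that $\Tb$ is continuously differentiable using the Pesin--Zhang uniqueness result together with Proposition~\ref{prop:unique-works}, and finally invoking Theorem~\ref{thm:birkhoff}.

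First I would observe that the potential in~\eqref{eqn:nonHolder} is in fact continuous on $[0,1]$: as $x\to 0^+$ one has $1-\log x\to +\infty$, hence $-(1-\log x)^{-\alpha}\to 0 = \ph(0)$. So $\ph\in \CCC([0,1],\RR)\subset \Af$, and in particular $\ph$ is bounded. Next, a uniformly piecewise expanding full-branched Markov map is expansive (after passing to the natural symbolic coding, or directly from the expansion estimate), so by Theorem~8.2 of~\cite{pW75} the entropy map $\mu\mapsto h(\mu)$ is upper semi-continuous on $\Mf([0,1])$.

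The key thermodynamic input is the result of Pesin and Zhang~\cite{PZ06}, which asserts that for every $\alpha>1$ and every $q\in\RR$, the potential $q\ph$ admits a unique equilibrium state. Combining this with upper semi-continuity of the entropy, I would apply Proposition~\ref{prop:unique-works} with $(q_1,q_2)=(-N,N)$ for each $N$, and conclude that $\Tb$ is $\CCC^1$ on all of $\RR$. To upgrade $\CCC^1$ to smoothness one would quote the analytic (or at least $\CCC^\infty$) dependence of the pressure on $q$ that comes out of the transfer-operator / inducing scheme analysis in~\cite{PZ06}; this is the one step that is not completely automatic from the general theorems of this paper, and so it is the only real obstacle in the argument.

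Given that $\Tb\in \CCC^\infty(\RR)$ and that equilibrium states exist for every $q\in\RR$, Parts I and II of Theorem~\ref{thm:birkhoff} immediately give $\Tb=\BL$ and $\Kab=\emptyset$ for $\alpha\notin[\amin,\amax]$, and Part III applied on each bounded interval $(q_1,q_2)$ yields $\Ba=\TbL(\alpha)$ on $A((q_1,q_2))$, with $\Ba$ inheriting the smoothness of $\Tb$ (strict concavity follows since $\Tb$ is never affine on an interval, its equilibrium state there being unique). Letting $q_1\to -\infty$ and $q_2\to +\infty$ exhausts the open interval $(\amin,\amax)$. The endpoints $\amin,\amax$ can then be handled by the upper semi-continuity remark immediately following Theorem~\ref{thm:birkhoff}, which extends the conclusion of Part~III to $\alpha_1$ and $\alpha_2$ whenever the entropy map is upper semi-continuous.
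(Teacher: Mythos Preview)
Your argument is correct and matches the paper's approach: expansivity of $f$ gives upper semi-continuity of the entropy map, the Pesin--Zhang result~\cite{PZ06} supplies uniqueness of equilibrium states for every $q\ph$, Proposition~\ref{prop:unique-works} then yields $\Tb\in\CCC^1$, and Theorem~\ref{thm:birkhoff} finishes. Your remark that full smoothness (as opposed to mere $\CCC^1$) requires further input from~\cite{PZ06} is well taken---the paper glosses over this---and note also that uniqueness of equilibrium states does not by itself rule out $\Tb$ being affine on an interval (consider a one-point system), so that step likewise relies on the more detailed analysis in~\cite{PZ06} or on the fact that $\ph$ is not cohomologous to a constant.
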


Indeed, Proposition~\ref{prop:nonHolder} also holds for any potential $\ph$ such that all $q\ph$ are in the class considered by Pesin and Zhang.

For $0<\alpha\leq 1$, it is shown in~\cite{PZ06} that $\Tb$ has a phase transition at some value $q_0>0$.  Applying Theorem~\ref{thm:birkhoff}, we obtain a result for the non-linear part of the Birkhoff spectrum (see Figure~\ref{fig:phase-transition}); to obtain a complete result, we would need to apply Theorem~\ref{thm:phase} by establishing Condition \Aa.  Although this remains open, one might attempt to do this by using the fact that for a potential with summable variations, the Gurevich pressure on a topologically mixing countable Markov shift $X$ is the supremum of the classical topological pressure over topologically mixing finite Markov subshifts of $X$~\cite{oS99}; these finite subshifts give natural candidates for the compact invariant sets $X_n$ in Condition \Aa.

\begin{remark}
In~\cite{PS07}, Pfister and Sullivan prove a variational principle for the topological entropy of saturated sets, which include in particular the level sets $\Kab$, under the assumption that the system in question satisfies two properties, which they call the \emph{g-almost product property} and the \emph{uniform separation property}.  Expansive systems satisfy the latter, and uniformly hyperbolic systems satisfy the former.  For such systems, they prove (among other things) the following multifractal result for any continuous $\ph$~\cite[Proposition 7.1]{PS07}:
\begin{equation}\label{eqn:PfSu}
\Ba = \htop(\Kab) = \sup\left\{ h(\mu) \,\Big|\, \mu\in \Mf(X), \int \ph\,d\mu = \alpha\right\}.
\end{equation}
Given~\eqref{eqn:PfSu}, it is not difficult to show that~\eqref{eqn:BisTL} holds, which establishes the multifractal formalism for systems with the g-almost product property and uniform separation, provided the potential is continuous.  In particular, this includes the example given above, as well as some (but by no means all) of the examples mentioned below.
\end{remark}

\subsubsection{Parabolic maps}

An important class of non-uniformly expanding maps is the Manneville--Pomeau maps, which are non-uniformly expanding interval maps with an indifferent fixed point.  The primary potential of interest in this case is the geometric potential $\log\abs{f'}$, which corresponds to studying a non-H\"older potential on a \emph{uniformly} expanding interval map via an appropriate change of coordinates; thus this is closely related to the previous example.

The thermodynamic properties and Lyapunov spectra of these maps were studied in~\cite{kN00,GR09}; once again, Theorem~\ref{thm:birkhoff} provides a direct proof of the multifractal results using the thermodynamic results, although as above, one would need to establish Condition \Aa\ to deal with the linear parts of the spectrum using Theorem~\ref{thm:phase}.  We also remark that a significant achievement of~\cite{GR09} is to deal with the endpoints of the spectrum ($\lambda=0$ and $\lambda=\infty$), which cannot be dealt with using the present results.

Moving to two (real) dimensions, let $f\colon \overline{\CC} \to \overline{\CC}$ be a parabolic rational map of the Riemann sphere; that is, a rational map such that the Julia set $J(f)$ contains at least one indifferent fixed point (that is, a fixed point $z_0$ for which $|f'(z_0)|=1$), but does not contain any critical points.  Following Makarov and Smirnov~\cite{MS00}, we say that $f$ is \emph{exceptional} if there is a finite, non-empty set $\Sigma \subset \overline{\CC}$ such that $f^{-1}(\Sigma) \setminus \Crit f = \Sigma$, where $\Crit f$ is the set of critical points of $f$.

Let $\ph(z) = \log |f'(z)|$ be the geometric potential; combining the results in~\cite{MS00} with~\cite[Corollary D.1 and Theorem G]{hH08}, we see that if $f$ is non-exceptional, then the graph of the function $\Tb$ is as shown in Figure~\ref{fig:phase-transition}.  In particular, $\Tb$ is analytic and strictly convex on $(q_0,\infty)$, where $q_0 = -\dim_H J(f)$, and so writing
\[
\alpha_1 = D^+\Tb(q_0), \qquad \alpha_2 = \lim_{q \to \infty} \Tb'(q),
\]
it follows from Theorem~\ref{thm:birkhoff} that $\Ba = \TbL$ on $(\alpha_1,\alpha_2)$.  Since we are dealing with the geometric potential, this is also the entropy spectrum for Lyapunov exponents, and we may apply~\eqref{eqn:dimlyap} to obtain the dimension spectrum for Lyapunov exponents, $\LDa = \frac 1\alpha \TbL$.

This result is obtained by other methods in~\cite{GPR09}, where it is also shown that the spectra are linear on $[0,\alpha_1]$ (the dotted line in Figure~\ref{fig:phase-transition}).  As before, giving an alternate proof of this using Theorem~\ref{thm:phase} would require establishing Condition \Aa.

Once again, Pfister and Sullivan's results establish the formalism for the Birkhoff spectrum here, but \emph{not} for the dimension spectrum for Lyapunov exponents, as they only consider topological entropy.

\subsubsection{Other non-uniformly hyperbolic systems}

The existence and uniqueness of equilibrium states for a broad class of non-uniformly expanding maps in higher dimensions was studied by Oliveira and Viana~\cite{OV08} and by Varandas and Viana~\cite{VV08}.  To the best of the author's knowledge, the multifractal properties of these systems have not been studied at all, and so they provide an ideal application of Theorem~\ref{thm:birkhoff}.  It does not appear to be known whether or not these systems, which may have contracting regions, satisfy specification or any other property that would imply Pfister and Sullivan's g-almost product property, and so the results of~\cite{PS07} cannot be applied.

We describe the systems studied in~\cite{VV08} and use the results of that paper to apply Theorem~\ref{thm:birkhoff}.  Let $M$ be a compact manifold of dimension $m$ with distance function $d$ (more generally, Varandas and Viana consider metric spaces in which the Besicovitch covering lemma holds).  Let $f\colon M\to M$ be a local homeomorphism, and let $L(x)$ be a bounded function such that for every $x\in M$ there exists a neighbourhood $U_x\ni x$ such that $f_x = f|_{U_x} \colon U_x \to f(U_x)$ is invertible, with
\[
d(f(y),f(z)) \geq \frac 1{L(x)} d(y,z)
\]
for all $y,z\in U_x$.  Thus if $L(x) <1$, then $f$ is expanding at $x$, while if $L(x)\geq 1$, then $L$ controls how much contraction can happen near $x$.

Assuming every point has finitely many preimages, we write $\deg_x(f) = \# f^{-1}(x)$.  Assume also that level sets for the degree are closed and that $M$ is connected;  then is it shown in~\cite{VV08} that up to considering some iterate $f^N$ of $f$, we can assume that $\deg_x(f)\geq e^{\htop(f)}$ for all $x$.

The final conditions on the map $f$ are as follows:  there exist constants $\sigma>1$ and $L>0$ and an open region $\AAA\subset M$ such that
\begin{enumerate}[(H1)]
\item  $L(x)\leq L$ for every $x\in \AAA$ and $L(x)\leq \sigma^{-1}$ for all $x\in M\setminus \AAA$, and $L$ is close to $1$ (see~\cite{VV08} for precise conditions).
\item  There exists $k_0\geq 1$ and a covering $\PPP = \{P_1, \dots, P_{k_0}\}$ of $M$ by domains of injectivity for $f$ such that $\AAA$ can be covered by $r<e^{\htop(f)}$ elements of $\PPP$.
\end{enumerate}

That is, $f$ is uniformly expanding outside of $\AAA$, and does not display too much contraction inside $\AAA$; furthermore, since there are at least $e^{\htop(f)}$ preimages of any given point $x$, and only $r$ of these can lie in covering of $\AAA$ by elements of $\PPP$, every point has at least one preimage in the expanding region.

The requirement on the potential $\ph$ is as follows:
\begin{enumerate}[(P)]
\item  $\ph\colon M\to \RR$ is H\"older continuous and $\sup\ph - \inf \ph < \htop(f) - \log r$.
\end{enumerate}
It is proved in~\cite{VV08} that for any map $f$ and potential $\ph$ satisfying these conditions, there exists a unique equilibrium state for $\ph$.  In particular, if (P) holds for $\ph$, then there exists $q_0>1$ such that (P) holds for $q\ph$ as well, for all $q\in (-q_0,q_0)$.  Thus Theorem~\ref{thm:birkhoff} applies, and we have the following result on the Birkhoff spectrum.

\begin{proposition}\label{prop:VV}
Given a map $f\colon M\to M$ satisfying (H1) and (H2) and a H\"older continuous potential $\ph\colon M\to \RR$ satisfying (P), there exists $q_0>1$ such that $\Tb$ is $\CCC^1$ on the interval $(-q_0,q_0)$, and writing
\[
\alpha_1 = \lim_{q\to -q_0^+} \Tb'(q), \qquad \alpha_2 = \lim_{q\to q_0^-} \Tb'(q),
\]
we have $\Ba = \TbL(\alpha)=\inf_{q\in \RR} (\Tb(q) - q\alpha)$ for every $\alpha\in [\alpha_1,\alpha_2]$.
\end{proposition}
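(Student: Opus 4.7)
The plan is to reduce Proposition~\ref{prop:VV} directly to Theorem~\ref{thm:birkhoff} Part III (together with the remark following it, about endpoints and upper semi-continuity of entropy), using the thermodynamic results from~\cite{VV08} as the primary input. Two things must be arranged on an interval $(-q_0,q_0)$ with $q_0>1$: (a) existence (in fact uniqueness) of an equilibrium state $\nu_q$ for each $q\ph$, and (b) continuous differentiability of $\Tb(q) = P^*(q\ph)$.

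First, define
\[
q_0 = \frac{\htop(f) - \log r}{\sup\ph - \inf\ph}.
\]
Hypothesis \textbf{(P)} gives $\sup\ph-\inf\ph < \htop(f) - \log r$, hence $q_0 > 1$. For any $q \in (-q_0,q_0)$, the function $q\ph$ is H\"older continuous (a scalar multiple of a H\"older function), and
\[
\sup(q\ph) - \inf(q\ph) = |q|(\sup\ph - \inf\ph) < \htop(f) - \log r,
\]
so $q\ph$ again satisfies \textbf{(P)}. The Varandas--Viana theorem from~\cite{VV08} then yields a unique equilibrium state $\nu_q$ for $q\ph$, giving (a).

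For (b), I would invoke Proposition~\ref{prop:unique-works}: uniqueness of equilibrium states on the interval $(-q_0,q_0)$ together with upper semi-continuity of the entropy map implies $\Tb \in \CCC^1((-q_0,q_0))$. The inputs needed by Proposition~\ref{prop:unique-works} that are not already in hand are upper semi-continuity of $\mu \mapsto h(\mu)$ and the validity of the continuity statement in Proposition~\ref{prop:convergence} (automatic here since $\ph \in \CCC(M,\RR) \subset \Af$). With $\CCC^1$ smoothness and existence of equilibrium states for every $q \in (-q_0,q_0)$ in place, Theorem~\ref{thm:birkhoff} Part III applied to this interval gives $\Ba = \TbL(\alpha)$ for every $\alpha \in A((-q_0,q_0)) = (\alpha_1,\alpha_2)$. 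The remark immediately after Theorem~\ref{thm:birkhoff} notes that under upper semi-continuity of the entropy map the identity extends to the endpoints $\alpha_1,\alpha_2$, giving the closed interval in the proposition.

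The one genuinely nontrivial verification is upper semi-continuity of the entropy map for the class in \textbf{(H1)}--\textbf{(H2)}. These local homeomorphisms need not be expansive (the region $\AAA$ may contain arbitrarily small contraction), so the standard expansivity argument does not apply directly. The strategy here is to observe that the Varandas--Viana construction controls the dynamics outside $\AAA$ uniformly, while inside $\AAA$ the weak contraction controlled by $L$ prevents infinite fine structure; combining this with the finite covering $\PPP$ of domains of injectivity, one can show that $f$ is asymptotically $h$-expansive in the sense of Misiurewicz, which yields the required upper semi-continuity. Spelling out this last point cleanly is, I expect, the main technical obstacle in executing the plan; everything else is essentially a bookkeeping application of the results quoted from~\cite{VV08} and the general machinery of Theorem~\ref{thm:birkhoff} and Proposition~\ref{prop:unique-works}.
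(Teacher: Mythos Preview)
Your approach is essentially identical to the paper's: verify that condition \textbf{(P)} holds for $q\ph$ on an interval $(-q_0,q_0)$ with $q_0>1$, cite~\cite{VV08} for uniqueness of equilibrium states, and then invoke Theorem~\ref{thm:birkhoff}. The paper's argument is the single paragraph preceding the proposition and does not spell out any more detail than this.

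You are actually more careful than the paper on one point: you correctly flag that passing from uniqueness to $\CCC^1$-differentiability via Proposition~\ref{prop:unique-works} requires upper semi-continuity of the entropy map, and that the closed interval $[\alpha_1,\alpha_2]$ in the conclusion also relies on this (via the remark after Proposition~\ref{prop:unique-works}). The paper simply writes ``Thus Theorem~\ref{thm:birkhoff} applies'' without addressing this hypothesis explicitly, so either it is treating upper semi-continuity as established in~\cite{VV08} for this class, or it is being informal at this step. Your proposed route through asymptotic $h$-expansivity is a reasonable way to close the gap, though as you note it is the one place where real additional work would be needed.
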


See~\cite{VV08} for examples of specific systems to which their conditions, and hence Proposition~\ref{prop:VV}, apply.

\subsubsection{Maps with critical points}

Ever since the family of logistic maps was introduced, unimodal and multimodal maps have received a great deal of attention.   Existence and uniqueness of equilibrium states for a certain class of bounded potentials were established in~\cite{BT08}.  In particular, let $\HHH$ denote the collection of topologically mixing $\CCC^\infty$ interval maps $f\colon [0,1]\to[0,1]$ with hyperbolically repelling periodic points and non-flat critical points; given $f\in \HHH$, let $\ph\colon [0,1]\to \RR$ be a H\"older continuous potential such that
\begin{equation}\label{eqn:BR}
\sup \ph - \inf \ph < \htop (f).
\end{equation}
It is shown in~\cite{BT08} that there exists a unique equilibrium state for $\ph$, and so the analogue of Proposition~\ref{prop:VV} holds here.


In fact, it was shown by Blokh that any continuous topologically mixing interval map has the specification property (see, for example,~\cite{jB97}), which implies the g-almost product property, and so Pfister and Sullivan's result applies here, showing that the multifractal formalism holds for \emph{any} continuous potential $\ph$ on the entire spectrum.  However, their result does not apply to unbounded potentials such as the geometric potential $\ph(x) = -\log \abs{f'(x)}$.

The potentials $q\ph$, where $\ph$ is the geometric potential, were studied in~\cite{PS08,BT09,IT09}.  In the last of these papers, Iommi and Todd showed that for a related class of maps $f$, the potential $q\ph$ has a unique equilibrium state for all $q\in (-\infty, 0]$.  (In fact, they obtain results for $q>0$ as well, but we do not yet have the tools to use these here.)  Thus we may apply Theorem~\ref{thm:singularity} and show that if $\alpha_0 = \lim_{q\to 0^-} \Tb'(q)$ and $\amin = \lim_{q\to-\infty} \Tb'(q)$, then for all $q\leq 0$, we have
\[
\Tb(q) = \sup_{\alpha \in \RR} (\Ba + q\alpha),
\]
and for all $\alpha\in [\amin, \alpha_0]$, we have
\[
\Ba = \inf_{q\in \RR} (\Tb(q) - q\alpha).
\]
In particular, $\Ba$ is strictly concave and $\CCC^1$ on $[\amin,\alpha_0]$, and furthermore, $\Kab = \emptyset$ for $\alpha < \amin$.


\subsection{Entropy spectrum}

\subsubsection{Uniform hyperbolicity}

For uniformly hyperbolic systems, it can be shown that equilibrium states are Gibbs measures, and so Theorem~\ref{thm:entropy} applies to the entropy spectrum $\Ea$.  This gives an alternate proof of a particular case of the results in~\cite{TV99}, where the multifractal analysis of the entropy spectrum is carried out for expansive maps with specification (which includes uniformly hyperbolic systems).

\subsubsection{Parabolic maps}

Kesseb\"ohmer proves the existence of (non-invariant) weak Gibbs measures for continuous potentials on shift spaces~\cite{mK01}; in~\cite{JR09}, Jordan and Rams examine these weak Gibbs measures as measures on interval maps with parabolic fixed points.  Theorem~\ref{thm:entropy} then gives results regarding the entropy spectra of these measures.

\subsection{Dimension spectrum}

Conformality is automatic for one-dimensional piecewise smooth maps and for rational maps of the Riemann sphere; this provides an ideal setting to apply Theorem~\ref{thm:dimension}.

\subsubsection{Uniformly expanding maps}

For uniformly expanding maps of the interval, we have $a(x)=|f'(x)|>1$ uniformly, and so $\log a$ is positive and bounded away from $0$.  It immediately follows from the remarks in the previous section that all the conditions of Theorem~\ref{thm:dimension} are met.

The same results hold on conformal repellers in any dimension, as shown in~\cite{PW97}.  Our proof here provides an alternate proof of some of the results in that paper.

\subsubsection{Parabolic maps}

The dimension spectrum for Manneville--Pomeau maps has been studied in~\cite{kN00,JR09}; once again, the present approach provides an alternate proof of some results.

\subsubsection{Maps with critical points}

Given a multimodal map $f\in \HHH$, the multifractal analysis of the dimension spectrum for Gibbs measures associated to the potentials described above is carried out in~\cite{mT08,IT09}.  At present, these results \emph{cannot} be obtained using the results in this paper, due to the presence of the critical point, which the tools used here cannot yet handle.

\section{Preparatory results}\label{sec:prep}

\subsection{Convergence results}

\begin{proposition}\label{prop:convergence}
Let $X$ be a compact metric space, $f\colon X\to X$ be continuous, and $\ph\in \Af$.  Let $\mu\in \Mf(X)$ be an invariant measure, and consider a sequence of (not necessarily $f$-invariant) measures $\{\mu_n\} \subset \MMM(X)$ such that $\mu_n\to \mu$ in the weak* topology.  Then
\begin{equation}\label{eqn:Aftoo}
\lim_{n\to\infty} \int \ph \,d\mu_n = \int \ph \,d\mu.
\end{equation}
\end{proposition}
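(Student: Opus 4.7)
The plan is to reduce the statement to weak$^*$ convergence against a \emph{continuous} test function by approximating $\ph$ in the appropriate sense outside a small neighbourhood of its discontinuity set. This is essentially the portmanteau theorem specialised to bounded measurable integrands whose discontinuity set is $\mu$-null, with the crucial input from condition \textbf{(B)} being that $\mu(\ccph) = 0$ because the limit measure $\mu$ is $f$-invariant.

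First I would fix $\eps > 0$ and set $M = \sup_X \abs{\ph}$, which is finite by \textbf{(A)}. Since $\ccph$ is closed with $\mu(\ccph) = 0$, the closed $\delta$-neighbourhoods $F_\delta = \{x : d(x,\ccph) \leq \delta\}$ decrease to $\ccph$, so by continuity of $\mu$ from above I choose $\delta > 0$ with $\mu(F_\delta) < \eps/(8M)$. On the closed set $C_\delta = X \setminus U_\delta$, where $U_\delta$ is the open $\delta$-neighbourhood of $\ccph$, the function $\ph$ is continuous since $C_\delta$ is disjoint from $\ccph$.

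Second, I would invoke the Tietze extension theorem to obtain a continuous function $\tilde\ph \colon X \to \RR$ agreeing with $\ph$ on $C_\delta$ and satisfying $\|\tilde\ph\|_\infty \leq M$. Then
\[
\abs{\textstyle\int \ph \,d\mu - \int \tilde\ph \,d\mu} \leq 2M\mu(U_\delta) \leq 2M\mu(F_\delta) < \eps/4,
\]
and the analogous inequality holds with $\mu_n$ in place of $\mu$, provided we can control $\mu_n(F_\delta)$.

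Third, here the main obstacle arises: weak$^*$ convergence does not give bounds on $\mu_n$ of arbitrary sets, only of continuous integrands. However, since $F_\delta$ is closed, the standard portmanteau inequality (provable directly from weak$^*$ convergence by sandwiching the indicator of $F_\delta$ between continuous functions supported in successive neighbourhoods) yields $\limsup_n \mu_n(F_\delta) \leq \mu(F_\delta) < \eps/(8M)$, so for $n$ large $\abs{\int \ph \,d\mu_n - \int \tilde\ph \,d\mu_n} < \eps/4$. Finally, weak$^*$ convergence applied to the continuous $\tilde\ph$ gives $\abs{\int \tilde\ph \,d\mu_n - \int \tilde\ph \,d\mu} < \eps/4$ for $n$ large. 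Combining the three estimates via the triangle inequality yields $\abs{\int \ph \,d\mu_n - \int \ph \,d\mu} < \eps$, completing the proof. Note that invariance of $\mu$ is used only once, to ensure $\mu(\ccph) = 0$; the $\mu_n$ themselves need not be invariant.
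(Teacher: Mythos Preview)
Your proof is correct and follows essentially the same strategy as the paper: use condition \textbf{(B)} to get $\mu(\ccph)=0$, choose a small neighbourhood of $\ccph$, control the $\mu_n$-measure of that neighbourhood via the portmanteau inequality for closed sets, and reduce to weak$^*$ convergence against a continuous function. The only difference is cosmetic: the paper splits the integral over $B$ and $X\setminus B$ and asserts directly that the difference over the compact set $X\setminus B$ tends to zero ``since $\ph$ is continuous there,'' whereas you make this step explicit by Tietze-extending $\ph|_{C_\delta}$ to a globally continuous $\tilde\ph$ and applying weak$^*$ convergence to $\tilde\ph$---arguably the cleaner way to justify that step.
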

\begin{proof}
If $\ph$ is continuous, then this is immediate.  If $\ph$ is discontinuous, then let $M\in \RR$ be such that $|\ph(x)| \leq M$ for all $x\in X$, and fix $\eps>0$.  Condition (B) in the definition of $A_f$ tells us that $\mu(\ccph) = 0$, and thus there exists an open neighbourhood $B \supset \ccph$ such that $\mu(\overline{B}) < \eps$.  Since $\overline{B}$ is closed, we have
\[
\mu(\overline{B}) \geq \ulim_{n\to\infty} \mu_n(\overline{B}),
\]
and so there exists $N$ such that $\mu_n(B) \leq \mu_n(\overline{B}) < 2\eps$ for all $n\geq N$.  Now we have
\[
\left\lvert \int_X \ph \,d\mu - \int_X \ph \,d\mu_n \right\rvert \leq
\left\lvert \int_{X\setminus B} \ph \,d\mu - \int_{X\setminus B} \ph \,d\mu_n \right\rvert + 
\left\lvert \int_B \ph \,d\mu - \int_B \ph \,d\mu_n \right\rvert.
\]
Since $\ph$ is continuous on the compact set $X\setminus B$, the first difference goes to $0$ as $n\to \infty$.  Furthermore, by the above estimates, the second difference is less than $3M\eps$.  Since $\eps>0$ was arbitrary, this completes the proof of~\eqref{eqn:Aftoo}.
\end{proof}

\begin{proposition}\label{prop:singularity}
Let $X$ be a compact metric space and $\psi\colon X\to \RR\cup\{-\infty\}$ be continuous where finite (and hence bounded above).  Consider a sequence of measures $\{\mu_n\}$ converging to $\mu$ in the weak* topology, and suppose that $\int\psi\,d\mu > -\infty$.  Then
\begin{equation}\label{eqn:sc}
\int \psi\,d\mu \geq \ulim_{n\to\infty} \int \psi\,d\mu_n.
\end{equation}
\end{proposition}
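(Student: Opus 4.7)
The plan is to reduce to the continuous case by approximating $\psi$ from above by continuous functions, and then pass to the limit using monotone convergence.

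First I would note that the hypothesis "continuous where finite and bounded above" is equivalent to $\psi$ being upper semi-continuous on the compact space $X$: continuity at the points where $\psi$ is finite, together with the fact that $\psi(x)=-\infty \leq c$ for every finite $c$, shows that $\{\psi<c\}$ is open, and upper semi-continuity on a compact space automatically gives the stated boundedness above by some $M\in\RR$. Consequently $\psi$ is Borel measurable and the integral $\int\psi\,d\nu \in [-\infty,M]$ makes sense for every $\nu\in\MMM(X)$.

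Next, for each $k\in\NN$ define
\[
\psi_k(x) = \sup_{y\in X} \bigl(\psi(y) - k\,d(x,y)\bigr).
\]
A standard computation shows that $\psi_k\colon X\to\RR$ is $k$-Lipschitz (hence continuous), that $\psi_k\leq M$ for all $k$, that $\psi_k$ is monotonically decreasing in $k$, and that $\psi_k(x)\searrow\psi(x)$ pointwise (the latter uses upper semi-continuity of $\psi$). In particular $\psi\leq\psi_k$ for every $k$.

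Now fix $k$. Since $\psi_k$ is continuous on the compact space $X$, weak$^*$ convergence gives $\int\psi_k\,d\mu_n\to\int\psi_k\,d\mu$. Combining this with $\psi\leq\psi_k$ yields
\[
\ulim_{n\to\infty} \int\psi\,d\mu_n \;\leq\; \ulim_{n\to\infty} \int\psi_k\,d\mu_n \;=\; \int\psi_k\,d\mu.
\]
Finally I would let $k\to\infty$. Because $\psi_1$ is bounded and hence $\int\psi_1\,d\mu<\infty$, the monotone convergence theorem for decreasing sequences (valid into $[-\infty,+\infty)$ once the upper term is integrable) gives $\int\psi_k\,d\mu\searrow\int\psi\,d\mu$, which is finite by the hypothesis $\int\psi\,d\mu>-\infty$. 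This yields~\eqref{eqn:sc}.

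The only delicate point is the construction of the approximating sequence $\psi_k$ and the verification that it decreases pointwise to $\psi$; everything else is a routine combination of weak$^*$ convergence for continuous test functions with the decreasing monotone convergence theorem. The hypothesis $\int\psi\,d\mu>-\infty$ is used only to guarantee that the right-hand side of~\eqref{eqn:sc} is meaningful in the usual sense, but the argument above actually gives the inequality as an inequality of extended reals even without it.
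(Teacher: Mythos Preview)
Your proof is correct and follows the same overall strategy as the paper: approximate $\psi$ from above by continuous functions, use weak* convergence on the approximants, and pass to the limit via monotone convergence. The difference is purely in the choice of approximating sequence. The paper simply truncates, setting $\psi_M=\max(\psi,M)$ for $M<0$; since $\psi$ is continuous as a map into $\RR\cup\{-\infty\}$, each $\psi_M$ is a genuine continuous real-valued function, and integrability of $\psi$ with respect to $\mu$ gives $\int(\psi_M-\psi)\,d\mu<\eps$ for suitable $M$. Your sup-convolution $\psi_k(x)=\sup_y(\psi(y)-k\,d(x,y))$ is a bit more work to set up but has the advantage of only requiring upper semi-continuity of $\psi$, so your argument is marginally more general, though that generality is not needed here.

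One small quibble: your justification that ``continuous where finite'' implies upper semi-continuity is incomplete as written. Continuity at the finite points alone does not control the behaviour near points where $\psi=-\infty$ (consider $\psi(0)=-\infty$, $\psi(x)=1/x$ on $(0,1]$). The paper's hypothesis should be read as continuity of $\psi$ as a map into $\RR\cup\{-\infty\}$ with its natural topology; under that reading both upper semi-continuity and boundedness above are immediate, and your argument goes through.
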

\begin{proof}
Given $M<0$, define a continuous function $\psi_M\colon X\to \RR$ by
\[
\psi_M(x) = \max(\psi(x),M).
\]
Because $\psi$ is integrable with respect to $\mu$, we have for every $\eps>0$ some $M<0$ such that
\[
\int (\psi_M - \psi) \,d\mu < \eps,
\]
from which we deduce that
\[
\int \psi\,d\mu \geq \int \psi_M \,d\mu - \eps = \lim_{n\to\infty} \int \psi_M \,d\mu_n - \eps
\geq \ulim_{n\to\infty} \int \psi \,d\mu_n - \eps.
\]
Because $\eps>0$ was arbitrary, this establishes~\eqref{eqn:sc}.
\end{proof}

Observe that there are no dynamics in Proposition~\ref{prop:singularity}, so there is no requirement that any of the measures $\mu_n$ or $\mu$ be invariant.

\subsection{Measures associated with approximate level sets}

Recall that the level sets $\Kab$ are defined by
\[
\Kab(\ph) = \left\{ x\in X \,\Big|\, \lim_{n\to\infty} \frac 1n S_n\ph(x) = \alpha \right\},
\]
where we write $\Kab(\ph)$ to emphasise the role of the potential function $\ph$.  This may be rewritten as
\begin{align*}
\Kab(\ph) &= \left\{ x\in X \,\Big|\, \forall \eps>0 \exists N \text{ such that } 
\left| \frac 1n S_n\ph(x) - \alpha \right| < \eps \text{ for all } n\geq N \right\} \\
&= \bigcap_{\eps>0} \bigcup_{N\in\NN} \bigcap_{n\geq N} \left\{ x\in X\,\Big|\, \left| \frac 1n S_n\ph(x) - \alpha \right| < \eps \right\}.
\end{align*}
In the proofs of our main results, we will need to consider the following ``approximate level sets'':
\begin{equation}\label{eqn:Faen}
\begin{aligned}
\Faen(\ph) &= \bigcap_{n\geq N} \left\{ x\in X\,\Big|\, \left| \frac 1n S_n\ph(x) - \alpha \right| < \eps \right\} \\
\Fae(\ph) &= \bigcup_{N\in\NN} \Faen(\ph).
\end{aligned}
\end{equation}
For these we have
\[
\Kab(\ph) = \bigcap_{\eps>0} \Fae(\ph),
\]
In particular, the following relations will be quite useful:
\begin{align*}
\htop \Kab(\ph) &\leq \htop \Fae(\ph) = \sup_N \left(\htop \Faen(\ph)\right), \\
\dim_H \Kab(\ph) &\leq \dim_H \Fae(\ph) = \sup_N \left(\dim_H \Faen(\ph)\right).
\end{align*}
Observe that for a continuous function $\ph$, each set $\{x\in X \mid |(1/n)S_n\ph(x) - \alpha|<\eps\}$ is a union of intervals, and $\Faen(\ph)$ is a countable intersection of such sets.  When $Z$ is such a set, it is reasonable to approximate $\htop Z$ with $\lhtop Z$, which gives us an upper bound.  A similar upper bound applies when we study the topological pressure.

The utility of the capacity quantities (entropy and pressure) for our purposes is in the following lemma, which shows that when we deal with sets like $\Faen$ on which the Birkhoff averages converge \emph{uniformly} to a given range of values, then we can build measures with large free energy and with the expected integrals.

\begin{lemma}\label{lem:buildmeasure}
Let $X$ be a compact metric space, $f\colon X\to X$ be continuous, and $\psi, \zeta \in \Af$.  Fix $Z\subset X$ and let $\beta_1,\beta_2\in [-\infty,\infty]$ be given by
\[
\beta_1 = \llim_{n\to\infty} \inf_{x\in Z} \frac 1n S_n \psi(x), \qquad
\beta_2 = \ulim_{n\to\infty} \sup_{x\in Z} \frac 1n S_n \psi(x).
\]
Then for every $\gamma>0$ there exists $\mu\in\Mf(X)$ satisfying the following:
\begin{align}
\label{eqn:stillhere}
\int \psi \,d\mu &\in [\beta_1,\beta_2], \\
\label{eqn:varprinc}
h(\mu) + \int \zeta \,d\mu &\geq \uP_Z(\zeta) - \gamma.
\end{align}
\end{lemma}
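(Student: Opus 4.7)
The plan is to construct $\mu$ as the weak* limit of time-averaged Gibbs-type distributions on maximal separated subsets of $Z$, following Misiurewicz's classical proof of the variational principle and adapting it to this non-compact setting while simultaneously tracking the Birkhoff averages of $\psi$.

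\emph{Construction.} Given $\gamma>0$, pick $\delta>0$ small enough that $\uP_Z(\zeta,\delta) \geq \uP_Z(\zeta) - \gamma/2$, and a subsequence $n_k\to\infty$ along which maximal $(n_k,\delta)$-separated sets $E_{n_k}\subset Z$ satisfy $\tfrac{1}{n_k}\log W_{n_k} \to \uP_Z(\zeta,\delta)$, where $W_{n_k}=\sum_{x\in E_{n_k}} e^{S_{n_k}\zeta(x)}$. Form the weighted probability measures
\[
\sigma_{n_k} = \frac{1}{W_{n_k}}\sum_{x\in E_{n_k}} e^{S_{n_k}\zeta(x)}\delta_x, \qquad \mu_{n_k}=\frac{1}{n_k}\sum_{j=0}^{n_k-1} f^j_*\sigma_{n_k},
\]
and pass to a weak* convergent subsequence $\mu_{n_k}\to\mu$; a standard argument shows $\mu\in\Mf(X)$.

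\emph{Integral of $\psi$.} The definitions of $\beta_1,\beta_2$ imply that for any $\eps>0$ and all large $n_k$, every $x\in E_{n_k}\subset Z$ satisfies $\tfrac{1}{n_k}S_{n_k}\psi(x)\in[\beta_1-\eps,\beta_2+\eps]$. Using the identity $\int\psi\,d\mu_{n_k}=\int\tfrac{1}{n_k}S_{n_k}\psi\,d\sigma_{n_k}$, the same interval bound holds for $\int\psi\,d\mu_{n_k}$. Since $\psi\in\Af$, Proposition~\ref{prop:convergence} transfers this to the limit, giving $\int\psi\,d\mu\in[\beta_1,\beta_2]$ and establishing~\eqref{eqn:stillhere}.

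\emph{Free energy inequality.} Choose a partition $\xi$ of $X$ with $\diam\xi<\delta$ and $\mu(\partial\xi)=0$ (always achievable by a small perturbation). Since $E_{n_k}$ is $(n_k,\delta)$-separated, each atom of $\xi_{n_k}:=\bigvee_{j=0}^{n_k-1}f^{-j}\xi$ contains at most one point of $E_{n_k}$, so the definition of $\sigma_{n_k}$ yields the key identity
\[
\log W_{n_k} = H_{\sigma_{n_k}}(\xi_{n_k}) + n_k\int\zeta\,d\mu_{n_k}.
\]
For each fixed $q\in\NN$, Misiurewicz's trick of splitting $\{0,\dots,n_k-1\}$ into $q$ interleaved arithmetic progressions, combined with concavity of $t\mapsto-t\log t$, gives $\tfrac{1}{n_k}H_{\sigma_{n_k}}(\xi_{n_k}) \leq \tfrac{1}{q}H_{\mu_{n_k}}(\xi_q) + O\!\left(q\log(\#\xi)/n_k\right)$. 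Letting $k\to\infty$ using weak* convergence together with $\mu(\partial\xi)=0$ and Proposition~\ref{prop:convergence} applied to $\zeta\in\Af$, then letting $q\to\infty$, we obtain $\uP_Z(\zeta,\delta)-\gamma/2 \leq h(\mu,\xi) + \int\zeta\,d\mu \leq h(\mu)+\int\zeta\,d\mu$, which proves~\eqref{eqn:varprinc}.

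\emph{Main obstacle.} The Misiurewicz-style entropy estimate is the technical core of the argument. The essential subtlety is that $Z$ is neither compact nor invariant, and both $\psi$ and $\zeta$ may be discontinuous, so weak* continuity of the relevant integrals is not automatic and must be supplied by Proposition~\ref{prop:convergence}. This is precisely why the hypothesis $\psi,\zeta\in\Af$ cannot be dispensed with.
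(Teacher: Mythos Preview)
Your proof is correct and follows essentially the same approach as the paper: both construct the weighted empirical measures $\sigma_n$ on maximal $(n,\delta)$-separated subsets of $Z$, average along orbits to get $\mu_n$, pass to a weak* limit, and invoke Proposition~\ref{prop:convergence} (using $\psi,\zeta\in\Af$) for the integral convergence. The only difference is cosmetic: the paper cites Walters' proof of the variational principle (Theorem~9.10 in \cite{pW75}) for the free energy estimate~\eqref{eqn:varprinc}, noting that continuity of $\zeta$ is used there only to ensure $\int\zeta\,d\mu_{n_k}\to\int\zeta\,d\mu$, which Proposition~\ref{prop:convergence} supplies---whereas you spell out the Misiurewicz partition argument explicitly.
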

\begin{proof}
The construction of $\mu$ satisfying~\eqref{eqn:varprinc} is given in part 2 of the proof of~\cite[Theorem 9.10]{pW75}, and goes as follows.  Choose $\delta>0$ such that
\[
\ulim_{n\to\infty} \frac 1n\sum_{x\in E_n} e^{S_n \zeta(x)} > \uP_Z(\zeta) - \gamma,
\]
where $E_n$ is a maximal $(n,\delta)$-separated set, and define an atomic measure $\sigma_n$ on $E_n$ by
\begin{equation}\label{eqn:sigman}
\sigma_n = \frac{ \sum_{y\in E_n} e^{S_n \zeta(y)} \delta_y } { \sum_{z\in E_n} e^{S_n \zeta(z) } }.
\end{equation}
Define $\mu_n$ by
\begin{equation}\label{eqn:mun}
\mu_n = \frac 1n \sum_{i=0}^{n-1} \sigma_n \circ f^{-i},
\end{equation}
and let $\mu$ be any weak* limit of the sequence $\mu_n$---then $\mu$ is invariant, and the estimate~\eqref{eqn:stillhere} follows from Proposition~\ref{prop:convergence} upon observing that for every $\eps>0$, there exists $N\in \NN$ such that $\int \psi\,d\mu_n \in [\beta_1-\eps, \beta_2+\eps]$ for all $n\geq N$.  The estimate~\eqref{eqn:varprinc} is shown in the proof in~\cite{pW75}; although the proof there assumes that $\zeta$ is continuous, this is only used to guarantee the convergence $\int \zeta \,d\mu_{n_j} \to \int \zeta \,d\mu$, which in our case is given by Proposition~\ref{prop:convergence}.
\end{proof}

The full strength of Lemma~\ref{lem:buildmeasure} is only needed in the proof of Theorem~\ref{thm:dimension} (for the dimension spectrum).  For the proof of Theorem~\ref{thm:birkhoff} (for the Birkhoff spectrum), we only need the case $\zeta=0$.  In particular, in order to prove Theorems~\ref{thm:high-entropy} and~\ref{thm:singularity}, we only need the following two versions of Lemma~\ref{lem:buildmeasure}.

\begin{lemma}\label{lem:high-entropy}
Let $X$ be a compact metric space, $f\colon X\to X$ be continuous, and $\ph\colon X\to \RR$ be Borel measurable and bounded above and below.  Suppose $Z\subset X$ is such that $\lhtop(Z) > \uhtop(\CCC(\ph))$.  Fix $Z\subset X$ and let $\beta_1,\beta_2\in [-\infty,\infty]$ be given by
\[
\beta_1 = \llim_{n\to\infty} \inf_{x\in Z} \frac 1n S_n \ph(x), \qquad
\beta_2 = \ulim_{n\to\infty} \sup_{x\in Z} \frac 1n S_n \ph(x).
\]
Then for every $\gamma>0$ there exists $\mu\in\Mf(X)$ satisfying the following:
\begin{align}
\label{eqn:stillhere2}
\int \ph \,d\mu &\in (\beta_1 - \gamma,\beta_2 + \gamma), \\
\label{eqn:varprinc2}
h(\mu) &\geq \lhtop(Z) - \gamma.
\end{align}
\end{lemma}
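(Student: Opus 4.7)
The plan is to mimic the construction used in the proof of Lemma~\ref{lem:buildmeasure}, specialised to $\zeta = 0$. Fix $\delta > 0$ small enough that $\lhtop(Z,\delta) > \lhtop(Z) - \gamma/4$, choose a subsequence $\{n_j\}$ along which $\tfrac{1}{n_j}\log|E_{n_j}| \to \lhtop(Z,\delta)$ for maximal $(n_j,\delta)$-separated sets $E_{n_j}\subset Z$, form the equi-weighted atomic measures $\sigma_n = |E_n|^{-1}\sum_{y\in E_n}\delta_y$, the time-averages $\mu_n = \tfrac{1}{n}\sum_{i=0}^{n-1}\sigma_n\circ f^{-i}$, and take $\mu$ to be a weak* subsequential limit. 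The entropy estimate~\eqref{eqn:varprinc2} is inherited word-for-word from Walters' computation used in Lemma~\ref{lem:buildmeasure}, since that derivation depends only on the cardinalities $|E_n|$ and not on the potential.

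For the integral estimate~\eqref{eqn:stillhere2}, a direct computation gives
\[
\int \ph\,d\mu_n \;=\; \frac{1}{|E_n|}\sum_{y\in E_n}\frac{1}{n} S_n\ph(y) \;\in\; \left(\beta_1-\tfrac{\gamma}{2},\;\beta_2+\tfrac{\gamma}{2}\right)
\]
for all large $n$, by the definitions of $\beta_1$ and $\beta_2$. The difficulty is passing to the limit, since Proposition~\ref{prop:convergence} is unavailable when $\ph \notin \Af$. I would handle this with a cutoff argument: for small $\eta > 0$, choose a continuous function $\chi_\eta\colon X\to[0,1]$ with $\chi_\eta\equiv 0$ on an $\eta$-neighborhood $V_\eta$ of $\CCC(\ph)$ and $\chi_\eta\equiv 1$ outside $V_{2\eta}$. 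Then $\ph\chi_\eta$ is continuous on $X$, so $\int \ph\chi_\eta\,d\mu_n \to \int \ph\chi_\eta\,d\mu$; and the remainder $\ph(1-\chi_\eta)$ is supported in $V_{2\eta}$ and bounded by $\sup|\ph|$, so its contribution to either integral is controlled by the measure of $V_{2\eta}$. It therefore suffices to show that $\limsup_n \mu_n(V_{2\eta})\to 0$ as $\eta\to 0$, as $\mu(V_{2\eta}) \leq \liminf_n \mu_n(V_{2\eta})$ by weak* convergence on an open set.

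To bound $\mu_n(V_{2\eta})$ I split $E_n = G_n \cup B_n$, where $B_n$ consists of those $y\in E_n$ whose first $n$ iterates hit $V_{2\eta}$ on at least $\epsilon n$ occasions; then $\mu_n(V_{2\eta}) \leq \epsilon + |B_n|/|E_n|$. The crux is the combinatorial estimate
\[
\frac{1}{n}\log|B_n| \;\leq\; H(\epsilon) + \uhtop(\CCC(\ph),\eta) + o(1),
\]
where $H(\epsilon) = -\epsilon\log\epsilon - (1-\epsilon)\log(1-\epsilon)$. This is obtained by summing over the $\leq e^{nH(\epsilon)}$ possible visit-sets $T\subset\{0,\ldots,n-1\}$ of size at least $\epsilon n$, and for each $T$ bounding the number of $(n,\delta)$-separated orbits passing through $V_{2\eta}$ at the times in $T$ by a spanning-set count for $\CCC(\ph)$ at scale $\eta$. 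Since $\lhtop(Z) > \uhtop(\CCC(\ph))$, for $\epsilon$ sufficiently small and $\eta$ sufficiently small one has $H(\epsilon) + \uhtop(\CCC(\ph),\eta) < \lhtop(Z,\delta) - \gamma/4$, forcing $|B_n|/|E_n|\to 0$. Letting $\epsilon,\eta\to 0$ then yields~\eqref{eqn:stillhere2}.

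The main obstacle is the combinatorial bound on $|B_n|$. The subtlety is that $(n,\delta)$-separation is a joint condition on the full orbit in $X^n$ with the supremum metric, while visits to $V_{2\eta}$ are registered only at a subset $T$ of times, so the count of such separated orbits does not cleanly factor into a product of coordinate-wise packing numbers in $V_{2\eta}$ and $X$. The careful bookkeeping needed to absorb separation at times outside $T$ into the $\binom{n}{\lceil\epsilon n\rceil}$ choices, while extracting only the $\uhtop(\CCC(\ph),\eta)$-type contribution from times inside $T$, is the technical heart of the argument and is the point at which the entropy gap hypothesis becomes essential.
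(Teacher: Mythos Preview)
Your combinatorial bound on $|B_n|$ is false, and this is a genuine gap rather than a missing detail. Take $X=\Sigma_2^+$ the full two-shift, $Z=X$, and $\CCC(\ph)=\{\bar 0\}$, so $\uhtop(\CCC(\ph))=0$ and the hypothesis $\lhtop(Z)>\uhtop(\CCC(\ph))$ holds. For the standard metric, $(n,\delta)$-separated sets are cylinders of length $n$, so $|E_n|=2^n$. A neighborhood $V_{2\eta}$ of $\bar 0$ is a cylinder $[0^k]$; for $\epsilon<2^{-k}$ the law of large numbers gives that almost every word of length $n$ contains at least $\epsilon n$ occurrences of $0^k$, so $|B_n|/|E_n|\to 1$, while your bound would force $\tfrac{1}{n}\log|B_n|\le H(\epsilon)+o(1)$, hence $|B_n|/|E_n|\to 0$. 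The underlying reason is the one you flag yourself: separation at the $(1-\epsilon)n$ times outside $T$ contributes a full $\htop(X)$ to the exponential growth of $|B_n|$, and no amount of bookkeeping can absorb that into the $\binom{n}{\lceil\epsilon n\rceil}$ choices.

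The paper sidesteps this entirely. Rather than a static neighborhood $V_{2\eta}$, it covers $\CCC(\ph)$ by a dynamical neighborhood $U_m=\bigcup_{z\in F_m}B(z,m,\delta)$ built from $\#F_m\le Ce^{m(h_0-\eta)}$ Bowen balls of length $m$, where $h_0=\lhtop(Z,\delta)$ and $\eta>0$ comes from the entropy gap. The point is that it never counts orbits with many visits; instead it bounds $\sigma_n(f^{-k}(U_m))$ for each time $k$ separately. If $x\neq y$ in $E_n$ both satisfy $f^k(x),f^k(y)\in B(z,m,\delta)$ for the same $z$, then $x,y$ are close at $m$ consecutive times and hence must be separated at one of the remaining $n-m$ times, giving $\#\bigl(f^{-k}(B(z,m,\delta))\cap E_n\bigr)\le P_{n-m}^\delta$. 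Summing over $z\in F_m$ yields $\sigma_n(f^{-k}(U_m))\le Ce^{m(h_0-\eta)}P_{n-m}^\delta/P_n^\delta$, and averaging over $k$ (after truncating the sum at $n-m$ to keep the Bowen window inside the orbit segment) gives the same bound for $\mu_n^m(U_m)$. A short subadditivity argument shows that along a suitable subsequence $P_n^\delta\ge e^{mh_0}P_{n-m}^\delta$ up to negligible error, so $\mu_n^m(U_m)\lesssim Ce^{-m\eta}$, which is made smaller than $\gamma/(2M)$ by taking $m$ large. Since $\ph$ is continuous on the compact set $X\setminus U_m$, the integral estimate then follows exactly as in your cutoff step.
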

\begin{proof}
For $n\in \NN$ and $\delta>0$, let $P_n^\delta$ be the maximal cardinality of an $(n,\delta)$-separated subset of $Z$, and recall that
\[
\lhtop(Z) = \lim_{\delta\to 0} \llim_{n\to\infty} \frac 1n \log P_n^\delta.
\]
In particular, decreasing $\gamma$ if necessary, we may choose $\delta>0$ such that
\begin{equation}\label{eqn:entropies}
\uhtop(\CCC(\ph),\delta) < \lhtop(Z) - \gamma < \llim_{n\to\infty} \frac 1n \log P_n^\delta.
\end{equation}
Writing $h_0 = \llim_{n\to\infty} \frac 1n\log P_n^\delta$, we choose $\eta>0$ such that $h_0 - \eta > \uhtop(\CCC(\ph),\delta)$.  Thus there exists $C>0$ such that for every $m\in \NN$ there exists a set $F_m\subset \CCC(\ph)$ such that $\# F_m \leq Ce^{m(h_0-\eta)}$ and $U_m = \bigcup_{x\in F_m} B(x,m,\delta) \supset \CCC(\ph)$.  Observe that $U_m$ is open because $f$ is continuous.

Given $n\in \NN$, let $E_n$ be an $(n,\delta)$-separated subset of $Z$ with maximum cardinality $\#E_n = P_n^\delta$.  Following the previous proof, consider the measures $\sigma_n$ given by~\eqref{eqn:sigman} with $\zeta=0$:
\begin{equation}\label{eqn:sigmanm}
\sigma_n = \frac{\sum_{x\in E_n} \delta_x}{ \#E_n}.
\end{equation}
Now we vary the construction slightly; given $0\leq m<n$, we go $n-m$ steps (not $n$) along each orbit:
\begin{equation}\label{eqn:munm}
\mu_n^m = \frac 1n \sum_{k=0}^{n-m-1} \sigma_n \circ f^{-k}.
\end{equation}
That is, $\mu_n^m$ is a convex combination of $\delta$-measures evenly distributed across the first $n - m$ points in each orbit that begins in $E_n$.

For every $0\leq k < n - m -1$, consider the set
\[
B_n^m(k) = \{x\in E_n \mid f^k \in U_m\} = \bigcup_{z\in F_m} f^{-k}(B(z,m,\delta)) \cap E_n.
\]
Observe that for every $z\in F_m$ and every pair $x\neq y\in f^{-k}(B(z,m,\delta)) \cap E_n$, we have $d(f^i(x), f^i(y)) < \delta$ for all $n - m \leq i < n$, and since $E_n$ is $(n,\delta)$-separated, it follows that $d(f^i(x),f^i(y))\geq \delta$ for some $0\leq i <n - m$.  In particular, $f^{-k}(B(z,m,\delta)) \cap E_n \subset Z$ is $(n - m,\delta)$-separated, and hence has cardinality at most $P_{n-m}^\delta$.  It follows that
\[
\# B_n^m(k) \leq C e^{m(h_0-\eta)} P_{n-m}^\delta,
\]
and hence
\[
\sigma_n(f^{-k}(U_m)) = \frac{\#B_n^m(k)}{\#E_n} \leq C e^{-\eta m + mh_0} \frac{ P_{n-m}^\delta}{P_n^\delta}.
\]
This holds for all $0\leq k< n - m$, and hence
\begin{equation}\label{eqn:badset}
\mu_n^m (U_m) \leq C e^{-\eta m} \frac{ e^{mh_0} P_{n-m}^\delta}{P_n^\delta}.
\end{equation}

Thus in order to bound $\mu_n^m(U_m)$, we need some control of the ratio $P_{n-m}^\delta/P_n^\delta$.  Observe that if $P_n^\delta$ is actually equal to $e^{nh_0}$ for all $n$, then~\eqref{eqn:badset} immediately yields the bound $\mu_n^m (U_m) \leq C e^{-\eta m}$.  However, $P_n^\delta$ may not grow as uniformly as we would like, so we must be more careful.

Given $m\in \NN$, consider the quantity
\[
L(m) = \ulim_{n\to\infty} (\log (P_n^\delta) - \log (P_{n-m}^\delta) - mh_0).
\]
Suppose $L(m) < 0$.  Then there exists $\eps>0$ and $N\in \NN$ such that for all $n\geq N$, we have
\[
\log (P_n^\delta) - \log (P_{n-m}^\delta) - mh_0 < -\eps.
\]
In particular, this gives the following for every $k\in \NN$:
\[
\log (P_{N+km}^\delta) < \log (P_N^\delta) + kmh_0 - k\eps.
\]
Dividing by $km$ and taking the limit as $k\to\infty$, we get
\[
h_0 = \llim_{n\to\infty} \frac 1n \log (P_n^\delta) \leq 
\llim_{k\to\infty} \frac 1{N+km} (P_{N+km}^\delta) < h_0 - \frac \eps m,
\]
a contradiction.  This proves that $L(m) \geq 0$, from which we deduce that for every $m\in \NN$, there exists a sequence $n_j = n_j(m) \to\infty$ such that
\[
\llim_{j\to\infty} (\log (P_{n_j}^\delta) - \log (P_{n_j-m}^\delta) - m h_0) \geq 0,
\]
or equivalently,
\begin{equation}\label{eqn:Pnj}
\llim_{j\to\infty} \frac {P_{n_j}^\delta}{e^{mh_0} P_{n_j-m}^\delta} \geq 1.
\end{equation}
In combination with~\eqref{eqn:badset}, this will soon give us the bound we need.

As in the proof of Lemma~\ref{lem:buildmeasure}, let $\mu^m$ be a weak* limit point of the sequence $\mu_{n_j}^m$ (by passing to a subsequence if necessary, we assume that $\mu_{n_j}^m \to \mu^m$).  Invariance of $\mu^m$ and the entropy estimate~\eqref{eqn:varprinc2} hold just as before, so it only remains to show~\eqref{eqn:stillhere2}.

Let $M = \sup_{x\in X} \abs{\ph(x)}$, and choose $m$ large enough so that $C e^{-\eta m} <\gamma/2M$.  Carry out the above construction for this value of $m$, and observe that because $U_m$ is open, we have
\begin{equation}\label{eqn:smallbad}
\mu^m(U_m) \leq \ulim_{n_j\to\infty} \mu_{n_j}^m(U_m) \leq C e^{-\eta m} < \frac \gamma{2M},
\end{equation}
where the middle inequality follows from~\eqref{eqn:badset} and~\eqref{eqn:Pnj}.  Consequently,
\begin{multline}
\left\lvert \int_X \ph \,d\mu^m - \int_X \ph \,d\mu_{n_j}^m \right\rvert \leq \\
\left\lvert \int_{X\setminus U_m} \ph \,d\mu^m - \int_{X\setminus U_m} \ph \,d\mu_{n_j}^m \right\rvert + 
\left\lvert \int_{U_m} \ph \,d\mu^m - \int_{U_m} \ph \,d\mu_{n_j}^m \right\rvert.
\end{multline}
Since $\ph$ is continuous on the compact set $X\setminus U_m$, the first difference goes to $0$ as $j\to\infty$, and by~\eqref{eqn:smallbad}, the second term is less than $\gamma$; this proves \eqref{eqn:stillhere2} for $\mu^m$.
\end{proof}

\begin{lemma}\label{lem:singularity}
Let $X$ be a compact metric space, let $f\colon X\to X$ be continuous, and let $\psi\colon X\to \RR\cup\{-\infty\}$ be continuous where finite (and hence bounded above).  Fix $Z\subset X$ and let $\beta\in \RR$ be given by
\[
\beta = \llim_{n\to\infty} \inf_{x\in Z} \frac 1n S_n \psi(x).
\]
Then for every $\gamma>0$ there exists $\mu\in\Mf(X)$ satisfying the following:
\begin{align}
\label{eqn:stillhere3}
\int \psi \,d\mu &\geq \beta, \\
\label{eqn:varprinc3}
h(\mu) &\geq \uhtop(Z) - \gamma.
\end{align}
\end{lemma}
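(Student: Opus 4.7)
The plan is to follow the construction from the proof of Lemma~\ref{lem:buildmeasure} specialised to $\zeta \equiv 0$, but to replace the appeal to Proposition~\ref{prop:convergence} (weak* continuity of $\mu \mapsto \int\psi\,d\mu$) with an appeal to Proposition~\ref{prop:singularity} (weak* upper semi-continuity), which is precisely the version adapted to potentials that may take the value $-\infty$. One-sided control is enough because the conclusion~\eqref{eqn:stillhere3} only demands a \emph{lower} bound on $\int\psi\,d\mu$.

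Concretely, I would first fix $\delta > 0$ small enough that $\uhtop(Z,\delta) > \uhtop(Z) - \gamma/2$ and choose a sequence $n_k \to \infty$ so that $\frac{1}{n_k}\log P_{n_k}^\delta \to \uhtop(Z,\delta)$, where $E_{n_k} \subset Z$ is a maximal $(n_k,\delta)$-separated subset of cardinality $P_{n_k}^\delta$. Define
\[
\sigma_{n_k} = \frac{1}{\#E_{n_k}} \sum_{x \in E_{n_k}} \delta_x, \qquad
\mu_{n_k} = \frac{1}{n_k} \sum_{i=0}^{n_k-1} \sigma_{n_k} \circ f^{-i},
\]
and, by compactness of $\MMM(X)$, pass to a subsequence (still indexed by $k$) with $\mu_{n_k} \to \mu$ in the weak* topology. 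Since no potential enters the entropy side of the variational-principle argument, the standard proof (part 2 of~\cite[Theorem 9.10]{pW75}) applies verbatim and produces $\mu \in \Mf(X)$ together with $h(\mu) \geq \uhtop(Z,\delta) - \gamma/2 \geq \uhtop(Z) - \gamma$, giving~\eqref{eqn:varprinc3}.

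For the integral estimate, direct computation gives
\[
\int \psi\,d\mu_{n_k} = \frac{1}{\# E_{n_k}} \sum_{x \in E_{n_k}} \frac{1}{n_k} S_{n_k}\psi(x) \geq \inf_{y \in Z} \frac{1}{n_k} S_{n_k}\psi(y),
\]
and taking $\llim$ in $k$ together with the fact that the $\llim$ of any subsequence is at least the $\llim$ of the full sequence yields $\llim_k \int\psi\,d\mu_{n_k} \geq \beta$. Applying Proposition~\ref{prop:singularity} to $\mu_{n_k} \to \mu$ then gives $\int\psi\,d\mu \geq \ulim_k \int\psi\,d\mu_{n_k} \geq \beta$, which is~\eqref{eqn:stillhere3}. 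The main obstacle is exactly the loss of weak* continuity caused by the possible value $\psi = -\infty$; this is what forces us to use Proposition~\ref{prop:singularity} in place of Proposition~\ref{prop:convergence}. The nominal hypothesis $\int\psi\,d\mu > -\infty$ in Proposition~\ref{prop:singularity} causes no trouble: when $\beta > -\infty$, one can apply the upper semi-continuity inequality first to the continuous truncations $\psi_M = \max(\psi,M)$, obtaining $\int\psi_M\,d\mu \geq \ulim_k \int\psi\,d\mu_{n_k} \geq \beta$ for every finite $M$, and then pass to $M \to -\infty$ by monotone convergence to recover both $\int\psi\,d\mu > -\infty$ and $\int\psi\,d\mu \geq \beta$ simultaneously.
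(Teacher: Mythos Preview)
Your proposal is correct and follows exactly the approach taken in the paper, which simply says to repeat the proof of Lemma~\ref{lem:buildmeasure} with $\zeta=0$ and with Proposition~\ref{prop:singularity} in place of Proposition~\ref{prop:convergence}. Your write-up is in fact more careful than the paper's one-line proof: you explicitly address the hypothesis $\int\psi\,d\mu>-\infty$ in Proposition~\ref{prop:singularity} via the truncation argument, a point the paper passes over in silence.
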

\begin{proof}
The proof is exactly as in Lemma~\ref{lem:buildmeasure} with the choice $\zeta = 0$, $\eta = \psi$, with Proposition~\ref{prop:singularity} taking the place of Proposition~\ref{prop:convergence}.
\end{proof}

\section{Proof of Theorem~\ref{thm:birkhoff}}

The proof of Theorem~\ref{thm:birkhoff} proceeds in three parts, corresponding to the three parts of the theorem.  In the first part, we show that $\Tb$ is the Legendre transform of $\BBB$, thus establishing~\eqref{eqn:TisBL}.  From this, it immediately follows by standard properties of the Legendre transform that $\TbL$ is the concave hull of $\BBB$; that is, it is the smallest concave function greater than or equal to $\BBB$ at all $\alpha$.

Part II of the theorem is an easy consequence of the following proposition.

\begin{proposition}\label{prop:empty}
Suppose that $\Kab$ is non-empty; that is, there exists $x\in X$ such that $\ph^+(x)=\lim_{n\to\infty}\frac1n S_n\ph(x) = \alpha$.  Then $P^*(q\ph)\geq \alpha q$ for all $q\in \RR$.
\end{proposition}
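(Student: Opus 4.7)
The plan is to produce an $f$-invariant probability measure $\mu$ with $\int \ph\,d\mu = \alpha$, and then invoke the variational principle definition of $P^*$. Once such a $\mu$ is found, the bound is immediate since
\[
P^*(q\ph) \geq h(\mu) + \int q\ph\,d\mu \geq 0 + q\alpha = q\alpha,
\]
using only that $h(\mu) \geq 0$.

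To build $\mu$, I would use the standard Krylov--Bogolyubov construction applied to the orbit of the given point $x$. Specifically, define the empirical measures
\[
\mu_n = \frac{1}{n}\sum_{k=0}^{n-1} \delta_{f^k(x)} \in \MMM(X),
\]
and let $\mu$ be any weak* limit point of the sequence $\{\mu_n\}$ (one exists by compactness of $\MMM(X)$). A routine telescoping argument shows that $\mu$ is $f$-invariant, so $\mu \in \Mf(X)$.

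The key observation is that
\[
\int \ph\,d\mu_n = \frac{1}{n} S_n\ph(x) \to \alpha
\]
by hypothesis, so we would like to conclude $\int \ph\,d\mu = \alpha$. If $\ph$ were merely continuous this would be automatic, but since $\ph \in \Af$ may be discontinuous, we need Proposition~\ref{prop:convergence}: its hypothesis that the weak* limit $\mu$ be $f$-invariant is precisely what our Krylov--Bogolyubov construction supplies, and it gives the desired convergence $\int \ph\,d\mu_{n_j} \to \int \ph\,d\mu$ along the convergent subsequence. Hence $\int \ph\,d\mu = \alpha$, and the inequality above finishes the proof.

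There is essentially no obstacle here; the only subtle point is that Proposition~\ref{prop:convergence} is needed to handle the possible discontinuities of $\ph \in \Af$, since the empirical measures $\mu_n$ themselves are not $f$-invariant, only their limit is. This is exactly the setting Proposition~\ref{prop:convergence} was designed for.
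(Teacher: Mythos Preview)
Your proposal is correct and matches the paper's proof essentially line for line: empirical measures along the orbit of $x$, a weak* limit point giving an invariant measure, Proposition~\ref{prop:convergence} to pass $\int\ph\,d\mu_n\to\int\ph\,d\mu=\alpha$, and then the trivial bound $P^*(q\ph)\geq h(\mu)+q\alpha\geq q\alpha$. You have also correctly identified that the only subtle point is invoking Proposition~\ref{prop:convergence} to handle discontinuous $\ph\in\Af$.
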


Once Part I is established, Part III of the theorem is proved via the following series of intermediate results.

\begin{proposition}\label{prop:concave}
Let $\ph$ be Borel measurable and suppose that $\nu_q$ is an ergodic equilibrium state for $q\ph$.  Let $\alpha = \int \ph\,d\nu_q$.  Then
\begin{equation}\label{eqn:BisTLpt}
\Ba \geq \TbL(\alpha).
\end{equation}
\end{proposition}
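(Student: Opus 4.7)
The plan is to combine three ingredients: the Birkhoff ergodic theorem, the variational-principle lower bound of Bowen for topological entropy of non-compact sets, and the fact that $\nu_q$ is an equilibrium state. The argument is short because Part I (already established in this proof) tells us nothing about $\Ba$ from below, and this proposition is precisely where ergodic equilibrium states are used to push $\Ba$ up above $\TbL(\alpha)$.

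First I would observe that since $\nu_q$ is ergodic and $\int \ph \,d\nu_q = \alpha$, the Birkhoff ergodic theorem gives $\ph^+(x) = \alpha$ for $\nu_q$-a.e.\ $x$, i.e.\ $\nu_q(\Kab) = 1$. Next, I would invoke the classical Bowen-type inequality for Hausdorff-like topological entropy: if $\nu$ is an ergodic $f$-invariant measure and $Z \subset X$ is Borel with $\nu(Z) = 1$, then $\htop(Z) \geq h(\nu)$ (this is the analogue for Bowen entropy of the standard fact $\dim_H Z \geq \dim_H \nu$, and is proved, e.g., via the Brin--Katok entropy formula together with a Vitali-type covering argument applied to the definitions~\eqref{eqn:mhNd}--\eqref{eqn:mhd}; see~\cite{yP98}). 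Applied to $Z = \Kab$, this yields
\[
\Ba \;=\; \htop(\Kab) \;\geq\; h(\nu_q).
\]

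Finally, since $\nu_q$ is an equilibrium state for $q\ph$ and $\int \ph\,d\nu_q = \alpha$, the definition of $\Tb$ gives
\[
\Tb(q) \;=\; P^*(q\ph) \;=\; h(\nu_q) + q\alpha,
\]
so $h(\nu_q) = \Tb(q) - q\alpha \geq \inf_{q' \in \RR} (\Tb(q') - q'\alpha) = \TbL(\alpha)$. Chaining these bounds yields $\Ba \geq h(\nu_q) \geq \TbL(\alpha)$, which is the desired inequality~\eqref{eqn:BisTLpt}.

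The only non-routine step is the Bowen inequality $\htop(Z) \geq h(\nu)$, and that is exactly the place where ergodicity of $\nu_q$ enters in an essential way; if one only had invariance, then the level set $\Kab$ might absorb just a portion of $\nu_q$ and the entropy estimate could fail. Everything else is a direct bookkeeping exercise with the Legendre transform: the point $q$ at which we extract the equilibrium state provides an admissible affine functional $q' \mapsto \Tb(q') - q'\alpha$ evaluated at $q$, and by construction the value there equals $h(\nu_q)$, so we automatically beat the infimum defining $\TbL(\alpha)$. This proposition thus serves as the pointwise lower bound that Theorem~\ref{thm:birkhoff}.III will upgrade, under the continuous-differentiability and existence hypotheses, into equality on the full interval $(\alpha_1,\alpha_2)$.
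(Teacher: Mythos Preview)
Your proof is correct and follows essentially the same path as the paper's: use ergodicity plus Birkhoff to get $\nu_q(\Kab)=1$, apply the Bowen-type inequality $h(\nu_q)\le\htop(\Kab)$ (the paper cites this as Theorem~A2.1 in~\cite{yP98}), and then unwind the equilibrium-state identity $h(\nu_q)=\Tb(q)-q\alpha\ge\TbL(\alpha)$. The only difference is cosmetic---you chain $\Ba\ge h(\nu_q)\ge\TbL(\alpha)$ while the paper writes the same inequalities starting from $\TbL(\alpha)$ and working up to $\Ba$.
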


Note the requirement in Proposition~\ref{prop:concave} that the equilibrium state $\nu_q$ be ergodic.  It will often be the case that general arguments will give the existence of \emph{non}-ergodic equilibrium states with $\alpha(\nu_q) = \alpha$, but this is not sufficient for our purposes.

\begin{proposition}[Ruelle's formula for the derivative of pressure]\label{prop:ruelle}
Let $\psi$ and $\phi$ be Borel measurable functions.  If the function
\[
q\mapsto P^*(\psi + q\phi)
\]
is differentiable at $q$, and if in addition $\nu_q$ is an equilibrium state for $\psi + q\phi$, then
\begin{equation}\label{eqn:derivative}
\frac{d}{dq} P^*(\psi + q\phi) = \int_X \phi \,d \nu_q.
\end{equation}
\end{proposition}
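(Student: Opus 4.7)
The plan is to exploit convexity of the pressure function together with the subgradient characterization of differentiability. Write $F(q) = P^*(\psi + q\phi)$. Since $F$ is the supremum over $\mu \in \Mf(X)$ of the affine-in-$q$ functions $q \mapsto h(\mu) + \int(\psi + q\phi)\, d\mu$, it is convex (whenever finite).

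The first step is to derive a subgradient inequality at $q$. Because $\nu_q$ is an equilibrium state for $\psi + q\phi$,
\[
F(q) = h(\nu_q) + \int(\psi + q\phi)\, d\nu_q.
\]
For any $q' \in \RR$, since $\nu_q \in \Mf(X)$ is admissible for the variational principle at parameter $q'$, the definition of $P^*$ gives
\[
F(q') \;\geq\; h(\nu_q) + \int(\psi + q'\phi)\, d\nu_q \;=\; F(q) + (q'-q)\int \phi\, d\nu_q.
\]
Thus $\int \phi\, d\nu_q$ is a subgradient of the convex function $F$ at $q$.

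The second step closes the argument using differentiability. Rearranging the inequality above, taking $q' > q$ yields
\[
\frac{F(q') - F(q)}{q' - q} \geq \int \phi\, d\nu_q,
\]
while for $q' < q$ the inequality reverses. Letting $q' \to q$ from each side gives $D^-F(q) \leq \int \phi\, d\nu_q \leq D^+F(q)$. If $F$ is differentiable at $q$, these two one-sided derivatives coincide and equal $F'(q)$, forcing $\int \phi\, d\nu_q = F'(q)$, which is precisely~\eqref{eqn:derivative}.

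There is no real obstacle here; the result is essentially a one-line consequence of convex duality once the subgradient inequality is written down. The only subtlety worth noting is that $\int \phi\, d\nu_q$ must be well defined, but this is automatic from the hypothesis that $\nu_q$ is an equilibrium state for $\psi + q\phi$ (so $\int(\psi + q\phi)\, d\nu_q$ is finite) combined with the fact that the difference quotients $(F(q') - F(q))/(q'-q)$ must be finite on both sides of $q$ in order for $F$ to be differentiable there. This bounds the putative value of $\int \phi\, d\nu_q$ from both sides and identifies it unambiguously with $F'(q)$.
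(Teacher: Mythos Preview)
Your proof is correct and is essentially identical to the paper's own argument: both use the equilibrium state to produce the subgradient inequality $F(q') \geq F(q) + (q'-q)\int\phi\,d\nu_q$, then divide by $q'-q$ on either side of $q$ and pass to the limit. Your phrasing in terms of one-sided derivatives and the extra remark on well-definedness of $\int\phi\,d\nu_q$ are minor elaborations, but the substance is the same.
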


\begin{corollary}\label{cor:alpha-interval}
Suppose $\Tb$ is continuously differentiable on $(q_1,q_2)$ and $q\ph$ has an equilibrium state $\nu_q$ for each $q\in(q_1,q_2)$.  Let $\alpha_1 = D^+\Tb(q_1)$ and $\alpha_2 = D^-\Tb(q_2)$; then for every $\alpha\in(\alpha_1,\alpha_2)$ there exists $q\in \RR$ such that $q\ph$ has an ergodic equilibrium state $\nu_q$ with $\alpha = \int \ph\,d\nu_q$.
\end{corollary}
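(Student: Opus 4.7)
The plan is to combine continuity of $\Tb'$ on $(q_1,q_2)$ with Ruelle's formula (Proposition~\ref{prop:ruelle}) and the ergodic decomposition. Convexity of $\Tb$ implies that $\Tb'$ is non-decreasing on $(q_1,q_2)$, and one-sided derivatives at the endpoints satisfy $D^+\Tb(q_1) = \lim_{q\to q_1^+} \Tb'(q) = \alpha_1$ and $D^-\Tb(q_2) = \lim_{q\to q_2^-} \Tb'(q) = \alpha_2$. Since $\Tb$ is $\CCC^1$ on $(q_1,q_2)$, the derivative $\Tb'$ is continuous on this interval, so by the intermediate value theorem its image covers $(\alpha_1,\alpha_2)$. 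Thus for any $\alpha \in (\alpha_1,\alpha_2)$ there exists $q \in (q_1,q_2)$ with $\Tb'(q) = \alpha$.

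Next, pick the equilibrium state $\nu_q$ guaranteed by hypothesis. Since $\Tb$ is differentiable at $q$, Proposition~\ref{prop:ruelle} (applied with $\psi = 0$ and $\phi = \ph$) yields
\[
\int_X \ph\,d\nu_q = \Tb'(q) = \alpha.
\]
If $\nu_q$ is already ergodic, we are done. Otherwise, decompose $\nu_q$ into ergodic components via the ergodic decomposition, writing $\nu_q = \int \mu\,d\tau(\mu)$ where $\tau$ is a Borel probability measure supported on $\Mfe(X)$.

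The remaining observation is that \emph{every} ergodic component of $\nu_q$ is itself an equilibrium state for $q\ph$. This is the standard fact recalled in the paper right after the definition of equilibrium state: both the entropy and the map $\mu \mapsto \int q\ph\,d\mu$ are affine under the ergodic decomposition, so
\[
P^*(q\ph) = h(\nu_q) + \int q\ph\,d\nu_q = \int \left(h(\mu) + \int q\ph\,d\mu\right) d\tau(\mu),
\]
and since each integrand is bounded above by $P^*(q\ph)$, it must equal $P^*(q\ph)$ for $\tau$-almost every $\mu$. Hence $\tau$-a.e.\ ergodic component $\mu$ is an equilibrium state for $q\ph$. Applying Proposition~\ref{prop:ruelle} to any such $\mu$ gives $\int \ph\,d\mu = \Tb'(q) = \alpha$, producing the required ergodic equilibrium state.

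The only mild subtlety is verifying that Ruelle's formula genuinely applies to each ergodic component individually; this is automatic because Proposition~\ref{prop:ruelle} requires only that $\Tb$ be differentiable at $q$ (which holds by hypothesis) and that the measure in question be an equilibrium state (which we just established). No new extremization argument is needed, and the proof rests entirely on already-proved ingredients.
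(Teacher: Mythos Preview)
Your proof is correct and follows essentially the same approach as the paper's: use the Intermediate Value Theorem on the continuous derivative $\Tb'$ to find $q$ with $\Tb'(q)=\alpha$, apply Ruelle's formula to get $\int\ph\,d\nu_q=\alpha$, and pass to an ergodic component (which is again an equilibrium state, hence by Ruelle's formula again has the same integral). The paper's version is more terse, but the logical skeleton is identical.
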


Once these results are established,~\eqref{eqn:BisTL} is a direct consequence of Proposition~\ref{prop:concave} and Corollary~\ref{cor:alpha-interval}.  It then follows from basic properties of the Legendre transform that $\BBB = \TbL$ has the same regularity as $\Tb$ (except for values of $\alpha$ corresponding to intervals on which $\Tb$ is affine).

\begin{proof}[Proof of part I]
We prove~\eqref{eqn:TisBL} by establishing the following two inequalities:
\begin{align}
\Tb &\leq \BL, \label{eqn:TleqBL} \\
\Tb &\geq \BL. \label{eqn:BleqTL}
\end{align}

First we prove~\eqref{eqn:TleqBL}.  Recall that
\[
\Tb(q) = P^* (q\ph) = \sup_{\nu\in\Mfe(X)} \left\{h(\nu) + q \int_X \ph \,d\nu \right\}.
\]
By Birkhoff's ergodic theorem, every ergodic measure $\nu$ has $\nu(\Kab)=1$ for some $\alpha$, and so for $\nu$-almost every $x\in \Kab$ (in particular, for \emph{some $x\in \Kab$}), we have $\int_X \ph\,d\nu = \ph^+(x) = \alpha$.  It follows that
\begin{align*}
\Tb(q) &= \sup_{\alpha\in\RR} \left( 
\sup_{\nu \in\Mfe(\Kab)} \left\{h(\nu) + q \int_X \ph \,d\nu \right\} \right) \\
&\leq \sup_{\alpha\in\RR} \left(\htop(\Kab) + q\alpha\right) = \BL(q),
\end{align*}
where the inequality $h(\nu)\leq \htop(\Kab)$ follows from Theorem~A2.1 in~\cite{yP98}.

Now we prove the reverse inequality~\eqref{eqn:BleqTL}, by showing that $\Tb(q) = P^*(q\ph) \geq \Ba + q\alpha$ for all $q,\alpha\in \RR$.  To this end, we fix $\eps>0$ and consider the sets $\Fae$, $\Faen$ defined in~\eqref{eqn:Faen}.

Applying Lemma~\ref{lem:buildmeasure} with $\zeta = 0$, $\psi = \ph$, $Z = \Faen$, and some $\gamma>0$, we obtain a measure $\mu\in \Mf(X)$ with $h(\mu) \geq \lhtop(\Faen)-\gamma$ and $\int \ph \,d\mu \geq \alpha - \eps$.  It follows that
\[
P^*(q\ph) \geq h(\mu) + q \int \ph \,d\mu \geq \lhtop(\Faen) - \gamma + q\alpha - q\eps,
\]
and since Lemma~\ref{lem:buildmeasure} can be applied with arbitrarily small $\gamma$, we get
\[
P^*(q\ph) \geq \htop(\Faen) + q\alpha - q\eps.
\]
Taking the supremum over all $N$ yields
\[
P^*(q\ph) \geq \htop(\Fae) + q\alpha - q\eps \geq \htop(\Kab) + q\alpha - q\eps,
\]
and since $\eps>0$ was arbitrary, this implies
\[
P^*(q\ph) \geq \htop(\Kab) + q\alpha.
\]
This holds for all $q,\alpha\in \RR$, which establishes~\eqref{eqn:BleqTL}.
\end{proof}

We now proceed to the proof of Part II.

\begin{proof}[Proof of Proposition~\ref{prop:empty}]
Suppose $\alpha\in \RR$ is such that there exists $x\in \Kab$.  Consider the empirical measures
\[
\mu_{n,x} = \sum_{i=0}^{n-1} \delta_{f^i(x)}.
\]
Choose any subsequence $n_k$ such that $\mu_{n_k,x}$ converges in the weak* topology to some $\mu \in \MMM^f(X)$.  Then by Proposition~\ref{prop:convergence}, we have $\int \ph\,d\mu =  \alpha$, and in particular,
\[
P^*(q\ph) \geq h(\mu) + \int q\ph\,d\mu \geq q\int \ph\,d\mu \geq q\alpha
\]
for every $q\in \RR$.
\end{proof}

Finally, we prove the string of propositions which implies Part III.

\begin{proof}[Proof of Proposition~\ref{prop:concave}]
Observe that since $\nu_q$ is ergodic, we have $\nu_q(\Kab)=1$, and hence $h(\nu_q) \leq \htop(\Kab)$.  Thus
\begin{align*}
\TbL(\alpha) &= \inf_{q'\in \RR} (\Tb(q') - q'\alpha') \\
&\leq \Tb(q) - q\alpha' = P^*(q\ph) - q\alpha \\
&= h(\nu_q) + \int_X q\ph \,d\nu_q - q\alpha \\
&\leq \htop(\Kab) = \Ba.\qedhere
\end{align*}
\end{proof}

\begin{proof}[Proof of Proposition~\ref{prop:ruelle}]
Write $g(q') = P^*(\psi + q'\phi)$.  Then for all $q'\in\RR$, we have
\begin{align*}
g(q') &= P^*(\psi + q'\phi) \\
&= \sup_\nu \left\{ h(\nu) + \int_X \psi \,d\nu + \int_X q'\phi \,d\nu \right\} \\
&\geq h(\nu_q) + \int_X \psi \,d\nu_q + q' \int_X \phi \,d\nu_q \\
&= P^*(\psi + q\phi) + (q'-q) \int_X \phi \,d\nu_q, \\
&= g(q) + (q'-q) \int_X \phi \,d\nu_q,
\end{align*}
whence
\[
g(q') - g(q) \geq (q'-q) \int_X \phi \,d\nu_q.
\]
In particular, for $q'>q$, we get
\[
\frac{g(q') - g(q)}{q'-q} \geq \int_X \phi \,d\nu_q,
\]
and hence $g'(q)\geq \int_X \phi \,d\nu_q$ (recall that differentiability of $g$ was one of the hypotheses of the theorem), while for $q'<q$,
\[
\frac{g(q') - g(q)}{q'-q} \leq \int_X \phi \,d\nu_q,
\]
and hence $g'(q)\leq \int_X \phi \,d\nu_q$, which establishes equality.
\end{proof}

\begin{proof}[Proof of Corollary~\ref{cor:alpha-interval}]
Since $\Tb'$ is continuous, the Intermediate Value Theorem implies that for every such $\alpha$ there exists $q$ such that $\Tb'(q)=\alpha$.  Thus applying Proposition~\ref{prop:ruelle} with $\psi=0$ and $\phi=\ph$, we see that any equilibrium state $\nu$ for $q\ph$ has $\nu(q\ph)=\alpha$.  Choose some such $\nu$; if $\nu$ is not ergodic, then any element in its ergodic decomposition is also an equilibrium state, and we are done.
\end{proof}

\section{Proof of Theorems~\ref{thm:phase},~\ref{thm:high-entropy}, and~\ref{thm:singularity}}

Given the proof of Theorem~\ref{thm:birkhoff} in the previous section, the proofs of Theorems~\ref{thm:phase},~\ref{thm:high-entropy}, and~\ref{thm:singularity} are relatively straightforward.

\begin{proof}[Proof of Theorem~\ref{thm:phase}]
Recall that the first two parts of Theorem~\ref{thm:birkhoff} hold without any assumptions on $f$, and thus we already have $\Tb = \BL$.  It remains only to show that $\Ba \geq \TbL(\alpha)$ for every $\alpha \in [\amin,\amax]$, given Condition \Aa.

Given such an $\alpha$, if there exists $q\in \RR$ such that $\Tb'(q)=\alpha$, then the proof of Theorem~\ref{thm:birkhoff} shows that $\Ba = \TbL(\alpha)$.  Thus we suppose that no such $q$ exists; in this case, let $q_0=Q(\alpha)$ be the (unique) value of $q$ such that
\[
\Tb(q) \geq \Tb(q_0) + (q-q_0)\alpha
\]
for all $q\in \RR$.  (Equivalently, we have $q_0 = -(\TbL)'(\alpha)$.)

Applying Theorem~\ref{thm:birkhoff} to the subsystem $X_n$, we see that
\[
\htop (\Kab \cap X_n) = \inf_{q\in \RR} (P_{X_n}^*(q\ph) - q\alpha);
\]
since $q\mapsto P_{X_n}^*(q\ph)$ is assumed to be differentiable on $\RR$, for every $\alpha\in [\amin,\amax]$ there exists $q_n \in \RR$ such that $A_n(q_n) = \frac d{dq} P_{X_n}^*(q\ph) |_{q=q_n} = \alpha$.  Let $\mu_n$ be an ergodic equilibrium state for $q_n\ph$ on $X_n$; then $\int \ph \,d\mu_n = \alpha$ by Proposition~\ref{prop:ruelle}, and so $\mu_n(\Kab) = 1$.  Thus we have
\begin{equation}\label{eqn:lowerbound}
\htop \Kab \geq h(\mu_n) = P_{X_n}^*(q_n\ph) - q_n\alpha.
\end{equation}

It follows from convexity of the pressure function that $q_n\to q_0$ as $n$ goes to $\infty$, and by continuity of the pressure function and Condition \Aa, this implies that
\[
\lim_{n\to\infty} P_{X_n}^*(q_n\ph) = P^*(q_0\ph),
\]
which together with~\eqref{eqn:lowerbound} shows that $\Ba \geq \Tb(q_0) - q_0 \alpha \geq \TbL(\alpha)$.
\end{proof}

\begin{proof}[Proof of Theorem~\ref{thm:high-entropy}]
The proof of Theorem~\ref{thm:high-entropy} mirrors the proof of Theorem~\ref{thm:birkhoff}; the primary difference is that Lemma~\ref{lem:high-entropy} replaces Lemma~\ref{lem:buildmeasure} in the proof of Part I, where we show~\eqref{eqn:TisBL2}.

The proof of Proposition~\ref{prop:empty} does not go through in this setting, and so Part II is weakened from the corresponding statement in Theorem~\ref{thm:birkhoff}.

The series of propositions in Part III goes through unchanged, as Proposition~\ref{prop:concave}, Proposition~\ref{prop:ruelle}, and Corollary~\ref{cor:alpha-interval} all hold without regard to continuity of the potential $\ph$.

Observe that~\eqref{eqn:TleqBL} holds here as well without modification, since its proof does not require any hypotheses on $\ph$.  Thus to prove~\eqref{eqn:TisBL2}, it suffices to establish the following inequality for every $q\in I_Q(h_0)$:
\begin{equation}\label{eqn:BleqTL2}
\Tb(q) \geq \sup_{\alpha\in I_A(h_0)} (\Ba + q\alpha).
\end{equation}
That is, we show that $\Tb(q) = P^*(q\ph) \geq \Ba + q\alpha$ for all $q \in I_Q(h_0)$ and $\alpha \in I_A(h_0)$.  Observe that if $\Ba \leq h_0$, then since $\alpha\in I_A(h_0)$ we have $\TbL(\alpha) = \inf_{q\in\RR} (\Tb(q) - q\alpha) > h_0 \geq \Ba$, and so in particular $\Tb(q) \geq \Ba + q\alpha$ for $q\in I_Q(h_0)$.  Thus it remains only to consider the case $\Ba > h_0$.

As in the proof of~\eqref{eqn:BleqTL} in Theorem~\ref{thm:birkhoff}, we fix $\eps>0$ and consider the sets $\Fae$, $\Faen$ defined in~\eqref{eqn:Faen}.  Because $h_0 < \Ba = \htop\Kab \leq \htop\Fae = \sup_N \htop\Faen$, we can find $N\in \NN$ such that $\htop\Faen > h_0$, and then apply Lemma~\ref{lem:high-entropy} with $\psi = \ph$, $Z = \Faen$, and some $\gamma>0$ to obtain a measure $\mu$ with $h(\mu) \geq \lhtop(\Faen)-\gamma$ and $\int \ph \,d\mu \geq \alpha - \eps - \gamma$.  It follows that
\[
P^*(q\ph) \geq h(\mu) + q \int \ph \,d\mu \geq \lhtop(\Faen) - \gamma + q\alpha - q(\eps + \gamma),
\]
and since Lemma~\ref{lem:high-entropy} can be applied with arbitrarily small $\gamma$, we get
\[
P^*(q\ph) \geq \htop(\Faen) + q\alpha - q\eps.
\]
Taking the supremum over all such $N$ yields
\[
P^*(q\ph) \geq \htop(\Fae) + q\alpha - q\eps \geq \htop(\Kab) + q\alpha - q\eps,
\]
and since $\eps>0$ was arbitrary, this implies
\[
P^*(q\ph) \geq \htop(\Kab) + q\alpha.
\]
This holds for all $q\in I_Q(h_0)$ and $\alpha\in I_A(h_0)$, which establishes~\eqref{eqn:BleqTL2}.

For Part II of Theorem~\ref{thm:high-entropy}, we observe that if $\Ba > h_0$, then we can apply Lemma~\ref{lem:high-entropy} exactly as above to obtain $\Tb(q) \geq \Ba + q\alpha$ for all $q\in \RR$, and hence $\TbL(\alpha) \geq \Ba > h_0$ as well, so $\alpha\in I_A(h_0)$.

As remarked above, the propositions in Part III go through unchanged, and we are done.
\end{proof}

\begin{proof}[Proof of Theorem~\ref{thm:singularity}]
The proof of Parts I of Theorem~\ref{thm:singularity} is nearly identical to the proof of Theorem~\ref{thm:high-entropy}, with Lemma~\ref{lem:singularity} replacing Lemma~\ref{lem:high-entropy} in the proof of~\eqref{eqn:TisBL}, and with $(-\infty,0)$ and $(-\infty, \alpha_0]$ replacing $I_Q(h_0)$ and $I_A(h_0)$.

Part II of Theorem~\ref{thm:singularity} follows from the observation that Proposition~\ref{prop:empty} \emph{does} apply in this setting as follows:  if $\Kab$ is non-empty for some $\alpha\in \RR$, then $P^*(q\ph) \geq \alpha q$ for all $q\leq 0$.  The proof only requires replacing Proposition~\ref{prop:convergence} with Proposition~\ref{prop:singularity}.

Again,~\eqref{eqn:TleqBL} holds here without modification.  Furthermore, for any $q\leq 0$ and $\alpha\in \RR$, we may fix $\eps>0$ and apply Lemma~\ref{lem:singularity} with $\psi = q\ph$, $Z = \Faen$, and some $\gamma>0$ to obtain a measure $\mu\in \Mf(X)$ with $h(\mu) \geq \lhtop(\Faen)-\gamma$ and $\int q\ph \,d\mu \geq q\alpha - q\eps$.  It follows that
\[
P^*(q\ph) \geq h(\mu) + \int q\ph \,d\mu \geq \lhtop(\Faen) - \gamma + q\alpha - q\eps,
\]
and just as in the proof of Theorem~\ref{thm:birkhoff}, we obtain
\[
P^*(q\ph) \geq \htop(\Kab) + q\alpha.
\]
This holds for all $q\leq 0$ and $\alpha\in\RR$, which establishes~\eqref{eqn:BleqTL}.

Part III is once again just as before.
\end{proof}

\section{Proof of Theorem~\ref{thm:dimension}}\label{sec:last}

As in the proof of Theorem~\ref{thm:birkhoff}, we carry out the proof of Theorem~\ref{thm:dimension} in three parts.  First, we show that $\Td$ is the Legendre transform of $\DDD$, establishing~\eqref{eqn:TisDL}.  From this, it immediately follows by standard properties of the Legendre transform that $\TdL$ is the concave hull of $\DDD$.

Part II of the theorem is an easy consequence of the following proposition.

\begin{proposition}\label{prop:emptyD}
Given $\alpha\in \RR$, suppose that $\Kad\cap X'$ is non-empty; that is, there exists $x\in X'$ such that $d_\mu(x) = \alpha$.  Then $\Td(q) \geq -\alpha q$ for all $q\in \RR$.  Furthermore, if there exists $x\in X'$ such that $d_\mu(x) = +\infty$, then $\Td(q) = +\infty$ for all $q<0$.
\end{proposition}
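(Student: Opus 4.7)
The plan is to construct an $f$-invariant measure $\nu$ as a weak$^*$ limit of empirical measures along the orbit of $x \in \Kad \cap X'$, and then apply the variational principle to bound $P^*(\ph_{q,t})$ from below. First, using the weak Gibbs condition~\eqref{eqn:Gibbs2} together with Lemma~\ref{lem:well-behaved}, I would convert the pointwise-dimension hypothesis $d_\mu(x) = \alpha$ into the following asymptotic identity: along a suitable subsequence $n_k \to \infty$,
\[
\frac{1}{n_k}S_{n_k}\ph_1(x) + \alpha\,\lambda_{n_k}(x) \to 0.
\]
The hypothesis $x \notin \ZZZ$, in the equivalent form~\eqref{eqn:Z2}, says that the liminf of $\abs{\frac{1}{n}S_n\ph_1(x)} + \abs{\lambda_n(x)}$ is strictly positive, which rules out the degenerate case where both sequences collapse simultaneously and the Bowen-ball comparison of Lemma~\ref{lem:well-behaved} fails to encode $d_\mu(x)$.

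Next, pass to a further subsequence along which the empirical measures $\mu_{n_k,x} = \frac{1}{n_k}\sum_{j=0}^{n_k-1}\delta_{f^j(x)}$ converge weak$^*$ to some $\nu \in \Mf(X)$. Continuity of $\ph_1$ and $\log a$ together with Proposition~\ref{prop:convergence} give $\int \ph_1\,d\nu + \alpha\int\log a\,d\nu = 0$, and the non-degeneracy from $x \notin \ZZZ$ yields $\abs{\int\ph_1\,d\nu} + \abs{\int\log a\,d\nu} > 0$ in the limit. Combined with the standing hypothesis $\lambda(\nu) = \int\log a\,d\nu \geq 0$, these force $\lambda(\nu) > 0$, since $\lambda(\nu) = 0$ would imply $\int\ph_1\,d\nu = 0$ as well. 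Now for any $q \in \RR$ and any $t < -\alpha q$, the variational principle yields
\[
P^*(\ph_{q,t}) \geq h(\nu) + q\int\ph_1\,d\nu - t\int\log a\,d\nu = h(\nu) + (-\alpha q - t)\,\lambda(\nu) > 0,
\]
and the supremum characterization in~\eqref{eqn:Td} gives $\Td(q) \geq -\alpha q$.

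For the case $d_\mu(x) = +\infty$, the same scheme with an arbitrary $M > 0$ replacing $\alpha$ produces an invariant measure $\nu_M$ satisfying $\int\ph_1\,d\nu_M + M\int\log a\,d\nu_M \leq 0$ and $\lambda(\nu_M) > 0$ (the divergence of $\log\mu(B(x,r))/\log r$ yields the inequality, and the argument for non-degeneracy is identical), whence the same computation gives $\Td(q) \geq -Mq$; letting $M \to \infty$ produces $\Td(q) = +\infty$ for every $q < 0$. The main obstacle is the first step: one must verify that $r_n := \delta e^{-n\lambda_n(x)}$ actually tends to zero along the chosen subsequence---so that the $\eps \to 0$ limit defining $d_\mu(x)$ can be transferred to Bowen balls via Lemma~\ref{lem:well-behaved}---and that the error terms in~\eqref{eqn:Gibbs2} do not contaminate the limiting linear relation on $\nu$. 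Both difficulties are controlled precisely by the exclusion of $\ZZZ$, which isolates exactly the points at which the pointwise dimension cannot be recovered from the thermodynamic data of $\ph_1$ and $\log a$.
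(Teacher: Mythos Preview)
Your argument for finite $\alpha$ is correct and matches the paper's approach: build an invariant measure $\nu$ as a weak* limit of empirical measures, use Proposition~\ref{prop:localdim2} (which you essentially re-derive) together with the weak Gibbs property to get $\int\ph_1\,d\nu = -\alpha\lambda(\nu)$, use $x\notin\ZZZ$ to force $\lambda(\nu)>0$, and conclude via the variational inequality. The paper compresses this into two lines by citing Proposition~\ref{prop:localdim2} directly.

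The gap is in the case $d_\mu(x)=+\infty$. Your claim that ``the argument for non-degeneracy is identical'' and yields $\lambda(\nu_M)>0$ is not justified. In the finite case you deduced $\lambda(\nu)>0$ from the \emph{equation} $\int\ph_1\,d\nu + \alpha\lambda(\nu)=0$: if $\lambda(\nu)=0$ then $\int\ph_1\,d\nu=0$, contradicting non-degeneracy. But in the infinite case you only have the \emph{inequality} $\int\ph_1\,d\nu_M + M\lambda(\nu_M)\leq 0$, which does not force $\int\ph_1\,d\nu_M=0$ when $\lambda(\nu_M)=0$. Worse, since $\ph_1$ is bounded, $|\int\ph_1\,d\nu_M|\leq\|\ph_1\|_\infty$, so $\lambda(\nu_M)\leq \|\ph_1\|_\infty/M\to 0$; your two claimed properties of $\nu_M$ are incompatible for large $M$.

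The paper takes a different and more direct route here. Because $\ph_1$ is bounded, $d_\mu(x)=+\infty$ together with Proposition~\ref{prop:localdim2} forces $\llambda(x)=0$. Passing to the subsequence where $\lambda_{n_k}(x)\to 0$ gives a limit $\nu$ with $\lambda(\nu)=0$; Corollary~\ref{cor:finitedimh} then gives $h(\nu)=0$. Non-degeneracy (from $x\in X'$) gives $\int\ph_1\,d\nu\neq 0$, and since $P^*(\ph_1)=0$ forces $\int\ph_1\,d\nu\leq -h(\nu)=0$, we get $\int\ph_1\,d\nu<0$. Then for any $q<0$ and any $t\in\RR$,
\[
P^*(\ph_{q,t}) \geq h(\nu) + q\int\ph_1\,d\nu - t\lambda(\nu) = q\int\ph_1\,d\nu > 0,
\]
so $\Td(q)=+\infty$ immediately. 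You can repair your argument by simply treating the case $\lambda(\nu_M)=0$ separately along exactly these lines; but note that this case is the only one that actually occurs.
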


Part III of the theorem is once again proved via intermediate results similar in spirit to those in the proof of Theorem~\ref{thm:birkhoff}.

\begin{proposition}\label{prop:concaveD}
Given $q\in \RR$, let $q_n\to q$ and $t_n \to \Td(q)$ be such that $t_n\leq \Td(q_n)$ for all $n$.  Fix $\alpha\in \RR$, and suppose that for all $n\in \NN$, there exists an ergodic equilibrium state $\nu_n$ for $\ph_{q_n,t_n}$ such that $\lambda(\nu_n)>0$ and
\begin{equation}\label{eqn:alpha-integralD}
\alpha=\frac{-\int \ph_1 \,d\nu_n}{\lambda(\nu_n)}.
\end{equation}
Then $\Da \geq \TdL(\alpha)$.
\end{proposition}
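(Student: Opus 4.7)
The plan is to produce, for each $n$, a set of full $\nu_n$-measure sitting inside $\Kad\cap X'$, bound its Hausdorff dimension from below by $h(\nu_n)/\lambda(\nu_n)$ using the conformal rescaling identity~\eqref{eqn:htopdimh}, and then rewrite this lower bound as $t_n + q_n\alpha$ using the equilibrium-state equation together with the sign condition $t_n\leq\Td(q_n)$. Passing to the limit $n\to\infty$ will give $\Da\geq\Td(q)+q\alpha\geq\TdL(\alpha)$.

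First I would show that $d_\mu(x)=\alpha$ and $x\in X'$ for $\nu_n$-a.e.\ $x$. Since $\nu_n$ is ergodic with $\lambda(\nu_n)>0$, Birkhoff's theorem gives $\lambda(x)=\lambda(\nu_n)$ and $\ph^+(x)=\int\ph\,d\nu_n$ at $\nu_n$-a.e.\ $x$, which places such $x$ in $\BB$ via (B2) and outside $\ZZZ$, because $\frac{1}{n}S_n\log a(x)\to\lambda(\nu_n)\neq 0$. The weak Gibbs asymptotic~\eqref{eqn:Gibbs2}, combined with these Birkhoff limits and the identity $\ph_1=\ph-P^*(\ph)$, identifies the local entropy as $h_\mu(x)=-\int\ph_1\,d\nu_n$. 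The sandwich of Lemma~\ref{lem:well-behaved} then expresses each Bowen ball as nested metric balls of radii asymptotically $e^{-n\lambda(\nu_n)}$, so the ratio $\log\mu(B(x,r))/\log r$ converges to $h_\mu(x)/\lambda(\nu_n)=-\int\ph_1\,d\nu_n/\lambda(\nu_n)=\alpha$ upon sending $n\to\infty$, $\delta\to 0$, and the auxiliary parameter $\eps$ of Lemma~\ref{lem:well-behaved} to zero. Hence $\nu_n$ is carried by $\Kad\cap K_{\lambda(\nu_n)}^\LLL\subset X'$.

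With this in hand, Theorem~A2.1 of~\cite{yP98} gives $\htop(\Kad\cap K_{\lambda(\nu_n)}^\LLL)\geq h(\nu_n)$, and since the Lyapunov exponent is constant and strictly positive on this set, the rescaling identity~\eqref{eqn:htopdimh} yields $\Da\geq h(\nu_n)/\lambda(\nu_n)$. For the Legendre step, the assumption $t_n\leq\Td(q_n)$ together with continuity and monotonicity of $t\mapsto P^*(\ph_{q_n,t})$ forces $P^*(\ph_{q_n,t_n})\geq 0$, so the equilibrium-state equation for $\nu_n$ reads
\begin{equation*}
h(\nu_n)+q_n\int\ph_1\,d\nu_n-t_n\lambda(\nu_n)\geq 0.
\end{equation*}
Dividing by $\lambda(\nu_n)>0$ and using $-\int\ph_1\,d\nu_n/\lambda(\nu_n)=\alpha$ rearranges this into $h(\nu_n)/\lambda(\nu_n)\geq t_n+q_n\alpha$, and together with the dimension estimate this yields $\Da\geq t_n+q_n\alpha$ for every $n$. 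Since $\Da$ is independent of $n$, letting $n\to\infty$ with $q_n\to q$ and $t_n\to\Td(q)$ gives $\Da\geq\Td(q)+q\alpha\geq\TdL(\alpha)$.

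The main obstacle is the pointwise-dimension calculation in the first step: three limiting procedures---the weak-Gibbs slow-growth~\eqref{eqn:tempered}, the two Birkhoff averages governing the exponent and the mass, and the auxiliary parameter $\eps$ in the sandwich of Lemma~\ref{lem:well-behaved}---must be synchronised so that the upper and lower estimates on $\log\mu(B(x,r))/\log r$ actually collapse to the single value $\alpha$. Everything else is comparatively routine: $P^*(\ph_{q_n,t_n})\geq 0$ is immediate from the definition of $\Td$ and monotonicity of pressure, and the final passage to the limit uses only that $\Da$ is a fixed real number.
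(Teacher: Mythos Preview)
Your argument is correct and follows essentially the same route as the paper's proof. The paper streamlines your first step by invoking Corollary~\ref{cor:ptwisedim} (built on Proposition~\ref{prop:localdim}) to obtain $\nu_n(\Kad)=1$ directly, and replaces your combination of Theorem~A2.1 and the rescaling identity~\eqref{eqn:htopdimh} with a single appeal to Corollary~\ref{cor:youngs}, which gives $\dim_H\nu_n=h(\nu_n)/\lambda(\nu_n)$; the equilibrium-state inequality and the passage to the limit are identical. The synchronisation of limits you flag as the main obstacle is exactly the content of Proposition~\ref{prop:localdim}, so there is no genuine gap---you are simply re-deriving that proposition inline rather than citing it.
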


\begin{proposition}\label{prop:alpha-intervalD}
Given $\eta>0$ and $I_Q=(q_1,q_2)$, suppose that the map $(q,t) \mapsto P^*(\ph_{q,t})$ is continuously differentiable on $R_\eta(I_Q)$, and that $\ph_{q,t}$ has an equilibrium state $\nu_{q,t}$ for every $(q,t)\in R_\eta(I_Q)$.  Then for every $\alpha\in(\alpha_2,\alpha_1)=(-D^-\Td(q_2),-D^+\Td(q_1))$ there exists a sequence $(q_n,t_n)\to (q,\Td(q))$ such that each $\ph_{q_n,t_n}$ has an ergodic equilibrium state $\nu_n$ satisfying~\eqref{eqn:alpha-integralD}.
\end{proposition}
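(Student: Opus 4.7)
The proof is the two-parameter analogue of Corollary~\ref{cor:alpha-interval}. The key observation is that Ruelle's formula (Proposition~\ref{prop:ruelle}) applied separately in $q$ and in $t$ yields, for any equilibrium state $\nu$ of $\ph_{q,t}$ at a point of joint differentiability of $(q,t)\mapsto P^*(\ph_{q,t})$,
\[
\partial_q P^*(\ph_{q,t}) = \int \ph_1 \, d\nu, \qquad \partial_t P^*(\ph_{q,t}) = -\lambda(\nu),
\]
so the target ratio $-\int \ph_1 \, d\nu_n / \lambda(\nu_n)$ coincides with $\partial_q P^*/\partial_t P^*$. Implicit differentiation of $P^*(\ph_{q, \Td(q)}) = 0$ then identifies this ratio on the graph of $\Td$ with $-\Td'(q)$. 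Thus the task reduces to finding $q \in I_Q$ with $\Td'(q) = -\alpha$ and producing an equilibrium state at a nearby point of $R_\eta(I_Q)$ where the calculation above applies rigorously.

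The plan is to proceed in two steps. First, I would apply the implicit function theorem to $(q,t) \mapsto P^*(\ph_{q,t})$, which is $C^r$ on $R_\eta(I_Q)$, to deduce that $\Td$ itself is $C^r$ on $I_Q$ (the required $\partial_t P^* \neq 0$ is discussed below). Joint convexity of $P^*$ together with $\partial_t P^* \leq 0$ forces the epigraph of $\Td$ to be convex, so $\Td$ is convex; combined with the boundary conditions $D^+\Td(q_1) = -\alpha_1$ and $D^-\Td(q_2) = -\alpha_2$, the intermediate value theorem produces some $q \in I_Q$ with $\Td'(q) = -\alpha$. Second, since the $C^r$ hypothesis only covers the open region strictly below the graph of $\Td$, I would approximate from below: for small $c > 0$ define $T_c \colon I_Q \to \RR$ implicitly by $P^*(\ph_{q, T_c(q)}) = c$; by the implicit function theorem $T_c$ is $C^r$, and $T_c \to \Td$, $T_c' \to \Td'$ uniformly on compacta as $c \to 0^+$. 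Choose $q_c \in I_Q$ with $T_c'(q_c) = -\alpha$ and set $t_c = T_c(q_c) < \Td(q_c)$; then $(q_c, t_c) \in R_\eta(I_Q)$ and $(q_c, t_c) \to (q, \Td(q))$. The hypothesis furnishes an equilibrium state $\nu_c$ for each $\ph_{q_c, t_c}$, and by Ruelle's formula at this differentiable point together with implicit differentiation of $P^*(\ph_{q, T_c(q)}) = c$,
\[
\frac{-\int \ph_1 \, d\nu_c}{\lambda(\nu_c)} = \frac{\partial_q P^*}{\partial_t P^*}\bigg|_{(q_c, t_c)} = -T_c'(q_c) = \alpha
\]
exactly. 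Passing to an ergodic component (itself an equilibrium state for $\ph_{q_c, t_c}$, to which Ruelle's formula again applies at this differentiable point) yields the required ergodic $\nu_n$.

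The main obstacle will be verifying that $\partial_t P^*(\ph_{q,t}) < 0$ strictly on the portion of $R_\eta(I_Q)$ used in the construction, as otherwise the implicit function theorem fails to produce $T_c$ or $\Td$ and the ratio in the conclusion becomes ill-defined. All that is given directly is $\partial_t P^* = -\lambda(\nu) \leq 0$; strictness amounts to the existence of an equilibrium state with strictly positive Lyapunov exponent near the graph of $\Td$. If this failed on an open subset of $R_\eta(I_Q)$, then $P^*$ would be locally constant in $t$ there, contradicting the definition of $\Td$ together with $P^*(\ph_{q, \Td(q)}) = 0 < P^*(\ph_{q,t})$ for $t < \Td(q)$. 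Extracting this strict monotonicity from the qualitative hypotheses, and propagating it to the boundary graph of $\Td$ where the $C^r$ assumption does not directly apply, is the technical heart of the argument.
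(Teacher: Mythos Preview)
Your proposal is correct and follows essentially the same route as the paper: define level curves $T_c$ by $P^*(\ph_{q,T_c(q)})=c$ via the Implicit Function Theorem (the paper takes $c=1/n$), use the Intermediate Value Theorem on $T_c'$ to locate $q_c$ with $T_c'(q_c)=-\alpha$, and then combine Ruelle's formula with implicit differentiation to get~\eqref{eqn:alpha-integralD} exactly. The paper dispenses with your preliminary step of first showing $\Td$ itself is $C^r$ and locating the limit point $q$ in advance; it simply extracts a convergent subsequence of $(q_n,t_n)$ at the end, which is slightly more economical. Your identification of the obstacle $\partial_t P^*<0$ and its resolution via finiteness of $\Td$ matches the paper's one-line justification.
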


As mentioned after the statement of Theorem~\ref{thm:dimension}, we can do away with the talk of sequences of potentials and measures in Propositions~\ref{prop:concaveD} and~\ref{prop:alpha-intervalD} if each $\ph_q$ has an equilibrium state $\nu_q$ with $\lambda(\nu_q)>0$ and if $\Td$ is $\CCC^r$ on $(q_1,q_2)$.  The proof in this case goes just like the proof we carry out below.

Before proceeding to the proof itself, we pause to collect pertinent results on the relationship between pointwise dimension, local entropy, and the Lyapunov exponent.  Given an ergodic measure $\nu\in \Mfe(X)$, the Lyapunov exponent $\lambda(x) = (\log a)^+(x)$ exists and is constant $\nu$-a.e.\ as a consequence of Birkhoff's ergodic theorem.  The analogous result for the local entropy $h_\nu(x)$ was proved by Brin and Katok~\cite{BK83}.  The following proposition shows (among other things) that together, these imply exactness of the measure $\nu$ when the map $f$ is conformal.

\begin{proposition}\label{prop:localdim}
Let $f\colon X\to X$ be continuous and conformal with continuous non-vanishing factor $a(x)$, and fix $\nu\in\Mf(X)$.  Suppose that the local entropy $h_\nu(x)$ and Lyapunov exponent $\lambda(x)$ both exist at some $x\in X$.  If $\lambda(x)>0$, then the pointwise dimension $d_\nu(x)$ also exists, and
\begin{equation}\label{eqn:localdim}
d_\nu(x) = \lim_{n\to\infty} \frac{-\log \nu(B(x,n,\delta))}{S_n\log a(x)} = \frac{ h_\nu(x) }{\lambda(x)}.
\end{equation}
If $\lambda(x) = 0$ and $h_\nu(x) > 0$, then $d_\nu(x)$ exists and is equal to $+\infty$.
\end{proposition}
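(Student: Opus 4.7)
The key tool is Lemma~\ref{lem:well-behaved}: because $\lambda(x)$ exists by hypothesis, $x$ satisfies condition \textbf{(B2)} and thus lies in $\BB$, so the lemma applies. The plan is to sandwich each Bowen ball between two metric balls whose radii are controlled by $\lambda_n(x)$, and then convert the entropy-type estimate $-\frac{1}{n}\log \nu(B(x,n,\delta)) \to h_\nu(x)$ into a dimension-type estimate on $\nu(B(x,r))$. Concretely, fix $\eps > 0$ and choose $\delta = \delta(\eps) > 0$ and $\eta = \eta(x) > 0$ from the lemma; writing $\rho_n = \eta\delta\, e^{-n(\lambda_n(x)+\eps)}$ and $R_n = \delta\, e^{-n(\lambda_n(x)-\eps)}$, we have $B(x,\rho_n) \subset B(x,n,\delta) \subset B(x,R_n)$.

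In the case $\lambda(x) > 0$, both $\rho_n$ and $R_n$ decrease to $0$ (taking $\eps < \lambda(x)$), and for each small $r>0$ I would pick $n(r)$ and $m(r)$ with $\rho_{n(r)+1} < r \le \rho_{n(r)}$ and $R_{m(r)+1} < r \le R_{m(r)}$. Monotonicity of $\nu$ combined with the sandwich gives
\[
-\log \nu(B(x, n(r),\delta)) \;\le\; -\log \nu(B(x,r)) \;\le\; -\log \nu(B(x, m(r){+}1,\delta)).
\]
Dividing by $-\log r$, and using the asymptotics $n(r)/(-\log r) \to 1/(\lambda(x)+\eps)$ and $m(r)/(-\log r) \to 1/(\lambda(x)-\eps)$ together with $-\frac{1}{k}\log\nu(B(x,k,\delta)) \to h_\nu(x)$, the quotient $-\log\nu(B(x,r))/(-\log r)$ ends up sandwiched between $h_\nu(x)/(\lambda(x)+\eps) + o(1)$ and $h_\nu(x)/(\lambda(x)-\eps) + o(1)$. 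Sending first $r \to 0$ and then $\eps \to 0$ gives the outer equality $d_\nu(x) = h_\nu(x)/\lambda(x)$; the middle equality in the statement is then immediate from $S_n\log a(x) = n\lambda_n(x) \to n\lambda(x)$ and $-\log\nu(B(x,n,\delta)) \sim n h_\nu(x)$.

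The case $\lambda(x) = 0$, $h_\nu(x) > 0$ uses only the one-sided half of the previous argument. Although $R_n$ need no longer shrink, $\rho_n \to 0$ still holds because $n(\lambda_n(x) + \eps) \to \infty$ (thanks to $\eps > 0$). For $r$ with $\rho_{n+1} < r \le \rho_n$ we have $-\log\nu(B(x,r)) \ge -\log\nu(B(x,n,\delta)) \ge n(h_\nu(x)-\eps)$ for $n$ large, while $-\log r \le -\log\rho_{n+1} \sim (n+1)\eps$, so
\[
\liminf_{r \to 0} \frac{-\log \nu(B(x,r))}{-\log r} \;\ge\; \frac{h_\nu(x) - \eps}{\eps}.
\]
Letting $\eps \to 0$ forces both the lower and upper limits to be $+\infty$, hence $d_\nu(x) = +\infty$.

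I expect the only delicate point to be the convergence defining $h_\nu(x)$: the paper's definition uses $\liminf$, so one must either read ``$h_\nu(x)$ exists'' as saying the full limit $\lim_k -\frac{1}{k}\log\nu(B(x,k,\delta))$ exists (for $\delta$ small), or else run the sandwich separately with $\liminf$ and $\limsup$ and observe that both sides yield $h_\nu(x)/\lambda(x)$ after sending $\eps \to 0$, forcing $d_\nu(x)$ to exist. Once that convergence is in hand, everything reduces to bookkeeping around the Bowen-to-metric-ball sandwich from Lemma~\ref{lem:well-behaved}.
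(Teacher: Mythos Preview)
Your proposal is correct and follows essentially the same argument as the paper: both use Lemma~\ref{lem:well-behaved} to sandwich Bowen balls between metric balls of radii $\eta\delta e^{-n(\lambda_n(x)+\eps)}$ and $\delta e^{-n(\lambda_n(x)-\eps)}$, then convert the entropy asymptotics into dimension estimates and let $\eps\to 0$. The paper's presentation differs only cosmetically (it works with the sequence $r_n$ and shows $\log r_{n+1}/\log r_n\to 1$ rather than explicitly solving for $n(r)$), and your closing remark about the $\liminf$ in the definition of $h_\nu(x)$ correctly flags the one point where some care is needed.
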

\begin{proof}
Fix $\eps>0$; if $\lambda(x) > 0$, choose $\eps < \lambda(x)$.  Since $\lambda(x)$ exists we may apply Lemma~\ref{lem:well-behaved} and obtain $\delta = \delta(\eps)>0$ and $\eta = \eta(x) > 0$ such that~\eqref{eqn:diamball} holds for all $n\in\NN$, and hence writing
\begin{equation}\label{eqn:rnsn}
r_n = \eta \delta e^{-n(\lambda_n(x) + \eps)}, \qquad
s_n = \delta e^{-n(\lambda_n(x)-\eps)},
\end{equation}
we have
\begin{equation}\label{eqn:nuballs}
\nu(B(x,r_n)) \leq \nu(B(x,n,\delta)) \leq \nu(B(x,s_n)).
\end{equation}
Observe that
\begin{equation}\label{eqn:logrn}
\log r_n = \log (\eta \delta) - S_n \log a(x) - n\eps,
\end{equation}
and that furthermore,
\begin{equation}\label{eqn:logrnratio}
\begin{aligned}
\frac{\log r_{n+1}}{\log r_n} &= \frac{\log (\eta \delta) - S_{n+1}\log a(x) - (n+1)\eps}{\log (\eta \delta) - S_n \log a(x) - n\eps} \\
&= 1 - \frac{\eps + \log a(f^n(x))}{\log(\eta \delta) - S_n \log a(x) - n\eps}.
\end{aligned}
\end{equation}
Observe that the numerator is uniformly bounded, and that if $\lambda(x) > 0$, the denominator goes to $-\infty$ by the assumption that $\eps < \lambda(x)$, while if $\lambda(x) = 0$, the denominator goes to $-\infty$ because $\left\lvert \frac 1n S_n \log a(x)\right\rvert < \frac{\eps}2$ for all sufficiently large $n$.  It follows that the ratio in~\eqref{eqn:logrnratio} converges to $1$, and a similar result holds for $s_n$.  The same argument shows that $r_n \to 0$ for all values of $\lambda(x)$, while $s_n\to 0$ provided $\lambda(x)>0$.

For future reference, we point out that everything up to this point also holds if $x\in \BBB$ and $\llambda(x) > 0$.

Now suppose that $\lambda(x)>0$.  It follows that
\begin{equation}\label{eqn:ratio2}
\lim_{n\to\infty} \frac{-\log r_n}{S_n \log a(x)} = 
\lim_{n\to\infty} \left(1 + \frac{n\eps - \log (\eta\delta)}{S_n\log a(x)}\right) = 1 + \frac{\eps}{\lambda(x)}.
\end{equation}
and we see from the first inequality in~\eqref{eqn:nuballs} that
\[
\frac{\log \nu(B(x,r_n))}{\log r_n} \left(\frac{-\log r_n}{S_n\log a(x)}\right)
 \geq \frac{-\log \nu(B(x,n,\delta))}{S_n \log a(x)},
\]
where we observe that the quantity on the right is exactly the quantity that appears in~\eqref{eqn:localdim}.  Letting $n$ tend to infinity, this yields
\begin{equation}\label{eqn:ndimgeq}
\llim_{n\to\infty} \frac{\log \nu(B(x,r_n))}{\log r_n}
\left(1+\frac{\eps}{\lambda(x)}\right) \geq \frac{h_\nu(x)}{\lambda(x)}.
\end{equation}
Now given an arbitrary $r>0$, let $n$ be such that $r_n \leq r \leq r_{n-1}$; it follows that
\[
\frac{\log \nu(B(x,r))}{\log r} \geq \frac{\log \nu(B(x,r_n))}{\log r_{n-1}}
= \frac{\log \nu(B(x,r_n))}{\log r_n} \frac{\log r_n}{\log r_{n-1}},
\]
and since $\log r_n / \log r_{n-1} \to 1$, we may let $r$ tend to $0$ to obtain
\[
\lowd_\nu(x) \left( 1+\frac{\eps}{\lambda(x)} \right) \geq \frac{h_\nu(x)}{\lambda(x)}.
\]
Since $\eps>0$ was arbitrary, this gives
\[
\lowd_\nu(x) \geq \frac{h_\nu(x)}{\lambda(x)}.
\]
Using similar estimates on $s_n$, we obtain the upper bound
\[
\uppd_\nu(x) \leq \frac{h_\nu(x)}{\lambda(x)},
\]
which implies~\eqref{eqn:localdim}.

It only remains to consider the case $\lambda(x) = 0$.  We first observe that in this case we can choose $N$ sufficiently large that $|S_n\log a(x) - \log (\eta\delta)| < n\eps$ for all $n\geq N$, and hence $0 > \log r_n > -2n\eps$.  Then the first inequality in~\eqref{eqn:nuballs} gives
\[
\frac{\log \nu(B(x,r_n))}{\log r_n} > -\frac{1}{2n\eps} \log \nu(B(x,n,\delta)),
\]
and taking the limit as $n\to\infty$ gives
\[
\ld_\nu(x) > \frac{h_\nu(x)}{2\eps},
\]
just as above.  Since $\eps>0$ was arbitrary, we have $d_\nu(x)=+\infty$.
\end{proof}

The following corollaries of Proposition~\ref{prop:localdim} are easily proved by considering generic points for the measure $\nu$.

\begin{corollary}\label{cor:youngs}
Let $f\colon X\to X$ be continuous and conformal with continuous non-vanishing factor $a(x)$, and fix $\nu\in \Mf(X)$ with $\lambda(\nu)>0$.  Then $\dim_H \nu = h(\nu) / \lambda(\nu)$.
\end{corollary}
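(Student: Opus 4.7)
The plan is to show that $\nu$ is exact-dimensional, with constant local dimension $h(\nu)/\lambda(\nu)$ at $\nu$-almost every point, and then to invoke the classical theorem that the Hausdorff dimension of a measure coincides with its almost-sure pointwise dimension.

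To establish exact-dimensionality, I would first reduce to the case where $\nu$ is ergodic, via the ergodic decomposition $\nu = \int \nu_\omega \, d\mu(\omega)$. For ergodic $\nu$, Birkhoff's ergodic theorem applied to the continuous function $\log a$ gives $\lambda(x) = \lambda(\nu)$, and in particular $\lambda(x) > 0$, for $\nu$-a.e.\ $x$. The Brin--Katok entropy formula~\cite{BK83} gives $h_\nu(x) = h(\nu)$ for $\nu$-a.e.\ $x$. Intersecting these two full-measure sets and applying Proposition~\ref{prop:localdim} pointwise yields
\[
d_\nu(x) = \frac{h_\nu(x)}{\lambda(x)} = \frac{h(\nu)}{\lambda(\nu)}
\]
for $\nu$-a.e.\ $x$, so $\nu$ is exact-dimensional with local dimension $d := h(\nu)/\lambda(\nu)$.

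To conclude $\dim_H \nu = d$, I would use two standard estimates. For the upper bound, fix $\eps > 0$ and choose a set $A_\eps$ with $\nu(A_\eps) > 1 - \eps$ on which $\nu(B(x,r)) \geq r^{d+\eps}$ uniformly for all sufficiently small $r$; a Vitali-type covering argument then bounds $\dim_H A_\eps \leq d + \eps$, and taking a union over a sequence $\eps_n \to 0$ produces a $\nu$-full-measure set of Hausdorff dimension at most $d$. For the lower bound, if $Z$ is any Borel set with $\nu(Z) > 0$, a Frostman-type mass distribution argument using the upper estimate $\nu(B(x,r)) \leq r^{d-\eps}$ on a large subset forces $\dim_H Z \geq d - \eps$, hence $\dim_H \nu \geq d$ after letting $\eps \to 0$.

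The only real obstacle is handling the ergodic reduction cleanly, since the ratio $h(\nu)/\lambda(\nu)$ need not equal the almost-sure value on ergodic components of a non-ergodic $\nu$; once we are in the ergodic setting, exact-dimensionality is handed to us by Proposition~\ref{prop:localdim}, and the passage from exact-dimensionality to $\dim_H \nu$ is a well-known theorem of Young that can be quoted directly.
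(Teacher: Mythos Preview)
Your approach is essentially the same as the paper's, which simply remarks that the corollary is ``easily proved by considering generic points for the measure $\nu$.'' Your outline (Birkhoff for $\lambda(x)$, Brin--Katok for $h_\nu(x)$, then Proposition~\ref{prop:localdim} pointwise, then Young's theorem to pass from exact-dimensionality to $\dim_H\nu$) is exactly what this remark unpacks to.

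Your hesitation about the ergodic reduction is well founded and worth stating more sharply: the corollary as written, for arbitrary $\nu\in\Mf(X)$, is in general \emph{false}. If $\nu=\tfrac12(\nu_1+\nu_2)$ with $\nu_1,\nu_2$ ergodic and, say, $h(\nu_1)=\lambda(\nu_1)=1$ while $h(\nu_2)=1$, $\lambda(\nu_2)=2$, then $\dim_H\nu=\max(\dim_H\nu_1,\dim_H\nu_2)=1$ but $h(\nu)/\lambda(\nu)=2/3$. The issue is precisely the one you flag: the ratio of integrals is not the integral of the ratio. So there is no clean ergodic-decomposition argument that rescues the statement in full generality. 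In the paper, however, every invocation of this corollary (in the proof of Theorem~\ref{thm:dimension} and Proposition~\ref{prop:concaveD}) is for an ergodic $\nu$, so the imprecision is in the statement rather than in the applications. Your proof is complete once you restrict to ergodic $\nu$, and you should simply note that this is the only case needed.
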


\begin{corollary}\label{cor:ptwisedim}
Let $f\colon X\to X$ be continuous and conformal with continuous non-vanishing factor $a(x)$, and fix $\mu,\nu\in \Mf(X)$.  Suppose that $\lambda(\nu)>0$, and let $\alpha\in \RR$ be given by
\[
\alpha = \frac{\int h_\mu(x) \,d\nu(x)}{\lambda(\nu)}.
\]
Then $\nu(\Kad(\mu)) =1$, where $\Kad(\mu)$ is the set of points $x\in X$ for which $d_\mu(x)=\alpha$.
\end{corollary}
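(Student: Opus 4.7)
The plan is to derive the corollary from the pointwise identity in Proposition~\ref{prop:localdim} by showing that for $\nu$-a.e.\ $x$, both $h_\mu(x)$ and $\lambda(x)$ exist, are $\nu$-a.e.\ constant, and satisfy $\lambda(x) > 0$. The Lyapunov exponent $\lambda$ is the Birkhoff average of the continuous bounded function $\log a$, so by the ergodic theorem applied to $\nu$ it exists $\nu$-a.e.; treating $\nu$ as ergodic (otherwise we pass to the ergodic decomposition and work componentwise, as is the standing convention in the preceding remark about generic points), this value equals $\lambda(\nu) > 0$ for $\nu$-a.e.\ $x$.

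The nontrivial step is showing that $h_\mu(x)$ is $\nu$-a.e.\ constant; this cannot be obtained directly from Birkhoff since $h_\mu$ is not a Birkhoff average. The trick is a one-sided pointwise inequality combined with an invariance argument: from the inclusion $B(x,n+1,\delta) = B(x,\delta) \cap f^{-1}(B(f(x),n,\delta))$ and $f$-invariance of $\mu$ we get
\[
\mu(B(x,n+1,\delta)) \leq \mu(f^{-1}(B(f(x),n,\delta))) = \mu(B(f(x),n,\delta)),
\]
which upon dividing by $-(n+1)$, taking $\liminf_{n\to\infty}$, and then letting $\delta \to 0$, gives the pointwise inequality $h_\mu(x) \geq h_\mu(f(x))$ at every $x \in X$. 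Integrating against the $f$-invariant measure $\nu$ yields $\int h_\mu \, d\nu \geq \int h_\mu \circ f \, d\nu = \int h_\mu \, d\nu$, so equality must hold $\nu$-a.e., i.e., $h_\mu \circ f = h_\mu$ $\nu$-a.e. Ergodicity of $\nu$ then forces $h_\mu$ to equal the constant $\int h_\mu \, d\nu = \alpha\, \lambda(\nu)$ $\nu$-a.e.

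Finally, at $\nu$-a.e.\ $x$ we have $\lambda(x) = \lambda(\nu) > 0$ and $h_\mu(x) = \alpha\,\lambda(\nu)$, so Proposition~\ref{prop:localdim} yields $d_\mu(x) = h_\mu(x)/\lambda(x) = \alpha$, and therefore $\nu(\Kad(\mu)) = 1$. The main obstacle is the $f$-invariance of $h_\mu$ modulo $\nu$, which the bootstrap above resolves; the reduction to the ergodic case and the measurability and integrability of $h_\mu$ are routine.
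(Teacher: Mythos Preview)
Your invariance bootstrap for $h_\mu$ is elegant and considerably more explicit than the paper's one-line sketch (``consider generic points for $\nu$''), but there is a genuine gap. Your inequality $\mu(B(x,n+1,\delta))\leq\mu(B(f(x),n,\delta))$ correctly yields $\underline h_\mu(x)\geq\underline h_\mu(f(x))$ for the \emph{lower} local entropy, and the integration trick then shows $\underline h_\mu$ is $\nu$-a.e.\ constant. The same argument gives constancy of the \emph{upper} local entropy $\overline h_\mu$. But nothing in your argument forces these two constants to coincide, and Proposition~\ref{prop:localdim} requires the full limit $h_\mu(x)$ to exist (its upper bound $\overline d_\mu(x)\leq h_\mu(x)/\lambda(x)$ comes from the $s_n$ estimate, which produces a $\limsup$ in the numerator). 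So from your argument alone you can only conclude $\underline d_\mu(x)\geq\underline h_\mu(x)/\lambda(x)$ and $\overline d_\mu(x)\leq\overline h_\mu(x)/\lambda(x)$, not that $d_\mu(x)$ exists and equals $\alpha$.

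The cleanest patch is to observe that the very statement of the corollary presupposes $h_\mu(x)$ is defined $\nu$-a.e.\ (otherwise the integral defining $\alpha$ is meaningless); with that taken as an implicit hypothesis, your constancy argument goes through and the proof is complete. Alternatively---and this is what actually happens in every application in the paper---$\mu$ is a weak Gibbs measure, so by~\eqref{eqn:Gibbs2} the quantity $-\frac1n\log\mu(B(x,n,\delta))$ differs from $P-\frac1nS_n\ph(x)$ by a term that vanishes, and Birkhoff's theorem for $\nu$ gives existence and constancy of $h_\mu$ immediately. The paper's terse proof is presumably relying on this.

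One smaller point: your remark about passing to the ergodic decomposition for non-ergodic $\nu$ does not actually rescue the statement. If $\nu=\tfrac12(\nu_1+\nu_2)$ with $\alpha_i=\int h_\mu\,d\nu_i/\lambda(\nu_i)$ distinct, then $\nu$ gives full measure to $K_{\alpha_1}^\DDD\cup K_{\alpha_2}^\DDD$, not to the single level set $K_\alpha^\DDD$ with $\alpha=(\int h_\mu\,d\nu)/\lambda(\nu)$. The corollary as stated really needs $\nu$ ergodic; the paper's sketch (generic points) and all its applications tacitly assume this.
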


Given a little more information about $X$, we can also say something about measures with zero Lyapunov exponent.

\begin{corollary}\label{cor:finitedimh}
Let $f\colon X\to X$ be continuous and conformal with continuous non-vanishing factor $a(x)$, and suppose that $\dim_H X<\infty$.  Then any $\nu\in \Mf(X)$ with $\lambda(\nu)=0$ must have $h(\nu)=0$ as well.
\end{corollary}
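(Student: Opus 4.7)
The plan is to argue by contradiction using the dimension--entropy relation of Proposition~\ref{prop:localdim}: a positive-entropy, zero-exponent ergodic measure would be forced to have infinite pointwise dimension almost everywhere, which is incompatible with $\dim_H X < \infty$.

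First I would reduce to the ergodic case via the ergodic decomposition $\nu = \int \nu_\xi\,d\xi$, using that both $h(\nu)=\int h(\nu_\xi)\,d\xi$ and $\lambda(\nu)=\int\lambda(\nu_\xi)\,d\xi$. In the ambient setting of Theorem~\ref{thm:dimension}, the standing hypothesis $\lambda(\nu')\geq 0$ for every $\nu'\in\Mf(X)$ applies to each $\nu_\xi$; combined with $\int \lambda(\nu_\xi)\,d\xi = 0$, this forces $\lambda(\nu_\xi) = 0$ for $\xi$-a.e.\ component. If $h(\nu)>0$, then $h(\nu_\xi)>0$ on a set of positive $\xi$-measure, so it suffices to derive a contradiction from a single ergodic $\nu$ with $\lambda(\nu)=0$ and $h(\nu)>0$.

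For such an ergodic $\nu$, the Birkhoff ergodic theorem gives $\lambda(x) = 0$ for $\nu$-a.e.\ $x$, and the Brin--Katok theorem gives $h_\nu(x) = h(\nu) > 0$ for $\nu$-a.e.\ $x$. The second clause of Proposition~\ref{prop:localdim} then yields $d_\nu(x) = +\infty$ for $\nu$-a.e.\ $x$.

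The final step is the standard mass-distribution fact that $\lowd_\nu(x)\geq s$ on a set of full $\nu$-measure implies $\dim_H A \geq s$ for every Borel $A$ with $\nu(A)=1$ (obtained by comparing the definition of $m_H(\,\cdot\,,s)$ with the bound $\nu(B(x,r)) \leq r^{s-\eps}$ valid for small $r$ on such an $A$). Applying this with every $s>0$ forces $\dim_H A = +\infty$, contradicting $\dim_H X < \infty$. The one genuine subtlety is the ergodic-decomposition step at the start, which is precisely where invoking the Theorem~\ref{thm:dimension} hypothesis $\lambda(\nu')\geq 0$ is essential; the rest of the argument is a clean application of the already-established Proposition~\ref{prop:localdim} together with a textbook density-dimension comparison.
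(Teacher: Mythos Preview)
Your argument is correct and follows the same route as the paper: reduce to an ergodic measure, apply Birkhoff and Brin--Katok to get $\lambda(x)=0$ and $h_\nu(x)>0$ almost everywhere, invoke Proposition~\ref{prop:localdim} to obtain $d_\nu(x)=+\infty$, and conclude $\dim_H X=+\infty$.

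You have actually been more careful than the paper on one point. The paper's proof begins ``First suppose that $\nu$ is ergodic'' and never returns to the non-ergodic case; you fill this in via the ergodic decomposition, and you correctly flag that this step needs the standing hypothesis $\lambda(\nu')\geq 0$ for all $\nu'\in\Mf(X)$ from Theorem~\ref{thm:dimension} (otherwise $\lambda(\nu)=0$ could arise from cancellation among components with Lyapunov exponents of both signs). Since the corollary is only invoked inside the proof of Theorem~\ref{thm:dimension}, this reliance on the ambient hypothesis is harmless, but it is good that you noticed the corollary as stated in isolation is not quite self-contained for non-ergodic $\nu$.
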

\begin{proof}
First suppose that $\nu$ is ergodic and that $h(\nu)>0$.  Then by Birkhoff's ergodic theorem and the Brin--Katok entropy formula, there exists a set $Y\subset X$ such that $\nu(Y)=1$ and for every $x\in Y$, we have $\lambda(x) = 0$ and $h_\nu(x) = h(\nu) > 0$.  It follows from Proposition~\ref{prop:localdim} that $d_\nu(x) = +\infty$, and hence 
\[
\dim_H X \geq \dim_H \nu = +\infty,
\]
which contradicts the assumption in Theorem~\ref{thm:dimension} that $\dim_H X<\infty$.
\end{proof}

A converse of sorts to Proposition~\ref{prop:localdim} is given by the following, which addresses the case where $d_\mu(x)$ exists even though $h_\mu(x)$ and $\lambda(x)$ may not.  We exclude points lying in $\ZZZ = \ZZZ(\mu)$.

\begin{proposition}\label{prop:localdim2}
Let $f\colon X\to X$ be continuous and conformal with continuous non-vanishing factor $a(x)$, and fix $\mu\in\Mf(X)$.  Suppose that the pointwise dimension $d_\mu(x)$ exists at some point $x\in X' \cap \BB$ and is equal to $\alpha$.  Then although the local entropy and Lyapunov exponent may not exist at $x$, the ratio of the pre-limit quantities still converges; in particular, we have
\begin{equation}\label{eqn:localdim2}
\lim_{n\to\infty} \frac{-\log \mu(B(x,n,\delta))}{S_n\log a(x)} = \alpha = d_\mu(x)
\end{equation}
whenever $\llambda(x) > 0$, and $\alpha=\infty$ if $\llambda(x) = 0$.
\end{proposition}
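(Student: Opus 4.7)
\textbf{Proof plan for Proposition~\ref{prop:localdim2}.}  The strategy is to run the sandwich argument of Proposition~\ref{prop:localdim} without using the existence of $\lambda(x)$ or $h_\mu(x)$.  Because $x\in \BB$, Lemma~\ref{lem:well-behaved} still applies, so for each $\eps>0$ there exist $\delta=\delta(\eps)>0$ and $\eta=\eta(x)>0$ such that, with $r_n$ and $s_n$ defined as in~\eqref{eqn:rnsn}, the sandwich
\[
\mu(B(x,r_n))\leq \mu(B(x,n,\delta))\leq \mu(B(x,s_n))
\]
holds for every $n\in\NN$.  The argument then splits on the value of $\llambda(x)$.

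Suppose first that $\llambda(x)>0$, and fix $\eps\in (0,\llambda(x))$.  Then $r_n,s_n\to 0$, so the hypothesis $d_\mu(x)=\alpha$ yields $-\log\mu(B(x,r_n))/(-\log r_n)\to \alpha$ and similarly with $s_n$.  I would factor
\[
\frac{-\log\mu(B(x,r_n))}{S_n\log a(x)}=\frac{-\log\mu(B(x,r_n))}{-\log r_n}\cdot \frac{-\log r_n}{S_n\log a(x)},
\]
where the calculation underlying~\eqref{eqn:ratio2} shows that the second factor has $\ulim = 1+\eps/\llambda(x)$ and $\llim = 1+\eps/\ulambda(x)$ (an analogous statement with $s_n$ in place of $r_n$ has the signs on $\eps$ flipped).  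Using $\alpha\geq 0$ to pass the limits through the product, the sandwich gives
\[
\alpha\bigl(1-\tfrac{\eps}{\llambda(x)}\bigr)\leq \llim_n\frac{-\log\mu(B(x,n,\delta))}{S_n\log a(x)}\leq \ulim_n\frac{-\log\mu(B(x,n,\delta))}{S_n\log a(x)}\leq \alpha\bigl(1+\tfrac{\eps}{\llambda(x)}\bigr),
\]
and letting $\eps\to 0$ (with the corresponding $\delta(\eps)$) produces~\eqref{eqn:localdim2}; the same computation covers the case $\alpha=+\infty$, where both outer bounds are $+\infty$.

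Now suppose $\llambda(x)=0$; I argue by contradiction, assuming $\alpha<\infty$.  The hypothesis $x\in \BB$ combined with $\llambda(x)=0$ forces $\llim_n|\lambda_n(x)|=0$: under (B2) we have $\lambda_n(x)\to 0$ and so $|\lambda_n(x)|\to 0$; under (B1), $\lambda_n(x)\geq -C/n$, hence along any subsequence realising $\llim\lambda_n=0$ we also have $|\lambda_n|\to 0$.  In either case $r_n\to 0$, so the left half of the sandwich and $d_\mu(x)=\alpha$ yield
\[
-\tfrac{1}{n}\log\mu(B(x,n,\delta))\leq -\tfrac{1}{n}\log\mu(B(x,r_n))=(\alpha+o(1))(\lambda_n(x)+\eps+o(1)),
\]
whose $\llim$ (using $\alpha\geq 0$ and $\llim\lambda_n(x)=0$) is at most $\alpha\eps$.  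Since the sandwich persists for all $\delta'\leq \delta(\eps)$, this bound survives $\lim_{\delta\to 0}$, and then sending $\eps\to 0$ forces $\lim_{\delta\to 0}\llim_n|\tfrac{1}{n}\log\mu(B(x,n,\delta))|=0$.  Combined with $\llim_n|\lambda_n|=0$, this places $x$ in $\ZZZ(\mu)$, contradicting $x\in X'$; hence $\alpha=+\infty$.

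The main obstacle is the $\llambda(x)=0$ case: the upper sandwich radius $s_n$ need not shrink along every subsequence, so the clean two-sided squeeze of the first case is unavailable and one must instead push just one side of the inequality through the definition of $\ZZZ$, trading a direct proof for a contradiction argument that crucially exploits membership in $X'$.  A secondary technical point is that Lemma~\ref{lem:well-behaved} is stated for a single $\delta(\eps)$, but the argument implicitly uses that the sandwich persists for all smaller $\delta$; this monotonicity should follow by a brief inspection of the proof in~\cite{vC09a}.
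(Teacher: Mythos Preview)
Your argument for $\llambda(x)>0$ matches the paper's essentially verbatim: both run the sandwich from Lemma~\ref{lem:well-behaved}, replace the equality in~\eqref{eqn:ratio2} by the one-sided bound $\ulim_n(-\log r_n)/S_n\log a(x)\le 1+\eps/\llambda(x)$ (and the analogue for $s_n$), and let $\eps\to 0$.

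For $\llambda(x)=0$ your contradiction argument is correct in substance but is the contrapositive of what the paper does, and the paper's direct version is cleaner. The paper fixes a subsequence $n_k$ with $\lambda_{n_k}(x)\to 0$, uses $x\notin\ZZZ$ to extract a \emph{lower} bound $\gamma(\delta_0)>0$ on $\llim_k -\tfrac{1}{n_k}\log\mu(B(x,n_k,\delta))$, and then the $r_n$-half of the sandwich together with $-\log r_{n_k}<2n_k\eps$ gives $\alpha\ge \gamma(\delta_0)/(2\eps)$; sending $\eps\to 0$ yields $\alpha=\infty$. This sidesteps both of your flagged technicalities: there is no need to take $\delta\to 0$ (one works at a single $\delta(\eps)$, which need only be below the $\delta_0$ coming from the negation of $x\in\ZZZ$), and there is no need to ``combine'' two separate liminfs. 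On that last point, note that as written your phrase ``Combined with $\llim_n|\lambda_n|=0$'' is not quite a valid step, since $\llim a_n+\llim b_n\le \llim(a_n+b_n)$ goes the wrong way; what actually makes your argument work is that the inequality $-\tfrac{1}{n}\log\mu(B(x,n,\delta))\le(\alpha+o(1))(\lambda_n(x)+\eps+o(1))$ holds for \emph{all} large $n$, so evaluating it along the subsequence realising $\lambda_{n_k}\to 0$ bounds the sum directly. If you reorganise around that subsequence from the start (as the paper does), both the $\delta\to 0$ issue and the liminf-combination issue disappear.
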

\begin{proof}
We deal first with the case $\llambda(x) = 0$.  In this case, there exists an increasing sequence $n_k$ such that
\[
\frac 1{n_k} S_{n_k} \log a(x) \to 0,
\]
and since $x\notin \ZZZ$, there exists $\delta_0>0$ such that 
\[
\gamma(\delta) := \llim_{k\to\infty} -\frac 1{n_k} \log \mu(B(x,n_k,\delta)) > \gamma(\delta_0) > 0
\]
for any $0<\delta<\delta_0$.

Fix $\eps>0$.  Because $x\in \BBB$, we may apply Lemma~\ref{lem:well-behaved} to get $r_n$ as in~\eqref{eqn:rnsn} for which~\eqref{eqn:nuballs} holds for $\mu$, and we have $r_{n_k}\to 0$ just as in the proof of Proposition~\ref{prop:localdim}.  In particular, for all sufficiently large $k$,~\eqref{eqn:nuballs} gives
\[
\frac{\log \mu(B(x,r_{n_k}))}{\log r_{n_k}} > -\frac{1}{2n_k\eps} \log \mu(B(x,n_k,\delta)),
\]
and it follows that
\[
\alpha = \lim_{k\to\infty} \frac{\log \mu(B(x,r_{n_k}))}{\log r_{n_k}} \geq \frac{\gamma(\delta_0)}{2\eps}.
\]
Since $\eps>0$ was arbitrary, we see that $\alpha=\infty$.  (Observe that since the hypothesis of the proposition tells us that $d_\mu(x)$ exists, it suffices to obtain $\ld_\mu(x)=\infty$, as we do here.)

We turn now to the case $\llambda(x)>0$.  As remarked in the proof of Proposition~\ref{prop:localdim}, the computations at the beginning of that proof are valid here as well; everything up to but not including~\eqref{eqn:ratio2} works in the present setting. \eqref{eqn:ratio2} is replaced by the following inequality:
\[
\ulim_{n\to\infty} \frac{-\log r_n}{S_n\log a(x)} \leq 1 + \frac{\eps}{\llambda(x)}.
\]
Thus we have the following in place of~\eqref{eqn:ndimgeq}:
\begin{align*}
d_\mu(x) \left( 1 + \frac{\eps}{\llambda(x)}\right) &= 
\lim_{n\to\infty} \frac{\log \mu(B(x,r_n))}{\log r_n} \left( 1 + \frac{\eps}{\llambda(x)} \right) \\
&\geq \ulim_{n\to\infty} \frac{-\log \mu(B(x,n,\delta))}{S_n\log a(x)}.
\end{align*}
Similar computations with $s_n$ give
\[
d_\mu(x) \left( 1 - \frac{\eps}{\llambda(x)} \right) \leq \llim_{n\to\infty}\frac{-\log \mu(B(x,n,\delta))}{S_n\log a(x)},
\]
and since $\eps>0$ was arbitrary, this suffices to prove~\eqref{eqn:localdim2}.
\end{proof}

\begin{proof}[Proof of Theorem~\ref{thm:dimension}]
We prove part I of the theorem by establishing the following two inequalities:
\begin{align}
\Td &\leq \DL, \label{eqn:TleqDL} \\
\Td &\geq \DL. \label{eqn:DleqTL}
\end{align}
We begin by proving~\eqref{eqn:TleqDL}.  First, observe that we may have $\Td(q)=+\infty$ for some values of $q$.  Suppose that this is the case for some $q\in \RR$; then for any sequence $t_n\to+\infty$, we have $P^*(\ph_{q,t_n}) > 0$ for all $n$, and hence there exists a sequence of ergodic $f$-invariant measures $\nu_n$ such that
\begin{equation}\label{eqn:pospress}
h(\nu_n) + q\int \ph_1\,d\nu_n - t_n \lambda(\nu_n) > 0.
\end{equation}
Now there are two possibilities.

\emph{Case 1.}  $\lambda(\nu_n) > 0$ for all $n$.  In this case we obtain
\[
\frac{h(\nu_n)}{\lambda(\nu_n)} + q\frac{\int \ph_1\,d\nu_n}{\lambda(\nu_n)} > t_n.
\]
Applying Corollary~\ref{cor:youngs}, we see that the first term is equal to $\dim_H \nu$; furthermore,  Corollary~\ref{cor:ptwisedim} together with the weak Gibbs property of $\mu$ gives $\nu_n(K_{\alpha_n}^\DDD) = 1$, where $\alpha_n = \int\ph_1\,d\nu_n / \lambda(\nu_n)$.  Consequently, we have
\[
\DDD(\alpha_n) + q\alpha_n \geq \dim_H \nu_n + q\alpha_n > t_n,
\]
and it follows that $\DL(q) = \sup_{\alpha\in \RR} (\Da + q\alpha) = +\infty$.

\emph{Case 2.}  There exists $n$ such that $\lambda(\nu_n) = 0$.  Then Corollary~\ref{cor:finitedimh} implies that $h(\nu_n) = 0$ as well, and~\eqref{eqn:pospress} gives us that $q\int \ph_1\,d\nu_n > 0$.  If $q\geq 0$, this is impossible, since $\int \ph_1 \,d\nu \leq 0$ for all $\nu\in \Mf(X)$.  If $q<0$, this implies that $\int \ph_1\,d\nu_n < 0$, and hence $\nu_n(\ZZZ) = 0$.  Now for $\nu_n$-a.e.\ $x\in X$, we may apply Proposition~\ref{prop:localdim2} to obtain $d_\mu(x) = +\infty$.  It follows that $\nu_n(K_\infty^\DDD) = 1$ and $K_\infty^\DDD \cap X' \neq \emptyset$, and we once again have $\DL(q) = +\infty$.

Having dealt with the case where $\Td(q)=+\infty$, we now turn our attention to the case where $\Td(q)$ is finite.  Given $t<\Td(q)$, we observe that any measure $\nu$ with $h(\nu) + \int \ph_{q,t}\,d\nu > 0$ must also satisfy $\lambda(\nu) > 0$, otherwise we would have $\Td(q) = +\infty$.  It follows that
\[
P^*(\ph_{q,t}) = \sup \left\{ h(\nu) + \int \ph_{q,t} \,d\nu \,\Big|\, \nu\in \Mfe(X), \lambda(\nu)>0 \right\}.
\]
Given $\alpha,\lambda \geq 0$, consider the following set:
\[
Z_{\alpha,\lambda} = \{ x\in X \mid \ph_1^+(x) = -\alpha\lambda, \lambda(x) = \lambda \}.
\]
Every ergodic measure $\nu$ is supported on some $Z_{\alpha,\lambda}$, and so we have
\[
0 < P^*(\ph_{q,t}) = \sup_{\alpha\geq 0} \sup_{\lambda>0} 
\sup \left\{ h(\nu) + \int \ph_{q,t} \,d\nu \,\Big|\, \nu\in \Mfe(X), \nu(Z_{\alpha,\lambda}) = 1 \right\}.
\]
It follows that there exists some $\alpha$, $\lambda$, and $\nu$ for which $\nu(Z_{\alpha,\lambda}) =1$ and
\[
h(\nu) + q \int \ph_1 \,d\nu - t\lambda(\nu) > 0.
\]
Applying Corollaries~\ref{cor:youngs} and~\ref{cor:ptwisedim} as before, we see that $\nu(\Kad)=1$ and
\[
(\dim_H \nu - q \alpha - t) \lambda > 0,
\]
which immediately yields
\[
t < \Da - q\alpha.
\]
Since $t<\Td(q)$ was arbitrary, this proves~\eqref{eqn:TleqDL}.

In order to show~\eqref{eqn:DleqTL}, we show that
\begin{equation}\label{eqn:TgeqD}
\Td(q) \geq \Da - q\alpha
\end{equation}
for every $q\in \RR$ and $\alpha \in \RR$.  (Observe that Proposition~\ref{prop:emptyD} deals with the case $\alpha=\infty$.)

Recall from~\eqref{eqn:Td} that
\[
\Td(q) = \inf\{t\in \RR\mid P^*(q\ph_1 - t\log a)\geq 0\} = \sup\{t\in \RR\mid P^*(q\ph_1 - t\log a) > 0\},
\] 
and so to establish~\eqref{eqn:TgeqD} (and hence~\eqref{eqn:DleqTL}), it suffices to show that $P^*(q\ph_1 - t\log a) > 0$ for every $t < \Da - q\alpha$.

To this end, fix $q, t\in \RR$ such that $t + q\alpha < \Da = \dim_H \Kad$.  We will build a measure $\nu$ such that
\begin{equation}\label{eqn:largefree}
h(\nu) + \int q\ph_1 \,d\nu - t\lambda(\nu) > 0,
\end{equation}
which will suffice to complete the proof of~\eqref{eqn:DleqTL}, by the above remarks.  Observe that since $\dim_H \ZZZ = 0$, we have
\[
\dim_H (\Kad \setminus \ZZZ) = \dim_H \Kad > t+q\alpha;
\]
furthermore, it follows from Proposition~\ref{prop:localdim2} that $\llambda(x) > 0$ for every $x\in \Kad \setminus \ZZZ$, and so we may apply~\cite[Theorem 2.1]{vC09a} and obtain
\[
P_{\Kad \setminus \ZZZ}(-(t+q\alpha)\log a) > 0,
\]
where $P_Z$ is the (Carath\'eodory dimension) topological pressure on $Z$.  Fix $\gamma>0$ small enough that we have
\begin{equation}\label{eqn:gammasmall}
P_{\Kad \setminus \ZZZ} (-(t+q\alpha) \log a) - \gamma > \gamma > 0.
\end{equation}

Now define a family of sets as in~\eqref{eqn:Faen}: for every $\eps>0$ and $n\in \NN$, consider the set
\begin{equation}\label{eqn:Gaen}
\Gaen = \left\{ x\in X \,\Big|\, \abs{\frac {-S_n\ph_1(x)}{S_n \log a(x)} - \alpha}
\leq \eps \text{ and $S_n\log a(x) > 0$ for all } n\geq N \right\}.
\end{equation}
We will also make use of the following sets:
\begin{equation}\label{eqn:Gae}
\Gae = \bigcup_{N\in\NN} \Gaen.
\end{equation}

Applying Proposition~\ref{prop:localdim2} and using the fact that $\mu$ is a weak Gibbs measure for $\ph$, we see that for every $x\in \Kad \setminus \ZZZ$,
\[
\alpha = d_\mu(x) = \lim_{n\to\infty} \frac{- S_n\ph_1(x)}{S_n\log a(x)}.
\]
Since $\llambda(x) > 0$ for every $x\in \Kad \setminus \ZZZ$, this implies $\Kad \setminus \ZZZ \subset \Gae$ for every $\eps>0$.  In particular, this implies that
\[
P_{\Kad \setminus \ZZZ}(-(t+q\alpha)\log a) \leq P_\Gae(-(t+q\alpha)\log a) =\sup_{N\in \NN} P_\Gaen(-(t+q\alpha)\log a),
\]
and so there exists $N\in \NN$ such that
\begin{equation}\label{eqn:gammasmall2}
P_\Gaen(-(t+q\alpha)\log a) - \gamma > \gamma > 0.
\end{equation}
Now we can apply the general inequality~\cite[(11.9)]{yP98} to obtain
\begin{equation}\label{eqn:gammasmall3}
\lP_{\Gaen}(-(t+q\alpha)\log a) - \gamma > \gamma > 0.
\end{equation}

Let $\psi(x) = \ph_1(x) + (\alpha + \eps)\log a(x)$, and observe that for every $x\in \Gaen$ and $n\geq N$, we have
\[
\abs{-S_n \ph_1(x) - \alpha S_n \log a(x)} \leq \eps S_n \log a(x),
\]
which gives
\[
-S_n \ph_1(x) \leq (\alpha +\eps) S_n \log a(x),
\]
and in particular, $S_n \psi(x) \geq 0$.  We may now apply Lemma~\ref{lem:buildmeasure} with $\psi = \ph_1 + (\alpha+\eps)\log a$, $\zeta = -(t+q\alpha) \log a$, $Z = \Gaen$, and $\gamma$ as before, to obtain a measure $\nu\in \Mf(X)$ with the following properties:
\begin{align}
\label{eqn:int}
\int \ph_1 \,d\nu + (\alpha + \eps) \lambda(\nu) &\geq 0, \\
\label{eqn:free}
h(\nu) - (t+q\alpha) \lambda(\nu) &\geq \lP_{\Gaen}(-(t+q\alpha)\log a) - \gamma > \gamma > 0.
\end{align}
If $q\geq 0$, then multiplying~\eqref{eqn:int} by $q$ yields
\[
\int q\ph_1 \,d\nu + (q\alpha + q\eps) \lambda(\nu) \geq 0,
\]
and adding this to~\eqref{eqn:free} yields
\[
h(\nu) + \int q\ph_1 \,d\nu - t\lambda(\nu) \geq \gamma - q\eps\lambda(\nu).
\]
We can choose $\eps>0$ small enough such that $\gamma > q\eps\lambda(\nu)$ for any invariant measure $\nu$, and this establishes~\eqref{eqn:largefree}.

For $q\leq 0$, we do a similar computation with $\psi = \ph_1 + (\alpha - \eps)\log a$.
\end{proof}

We now proceed to the proof of Part II.

\begin{proof}[Proof of Proposition~\ref{prop:emptyD}]
Suppose there exists $x\in \Kad \setminus \ZZZ$, and let $n_k$ be a subsequence such that the empirical measures $\mu_{x, n_k}$ converge to an invariant measure $\nu$.  Then $\lambda(\nu)>0$ (otherwise $\alpha=\infty$ or $x\in \ZZZ$) and $-\int \ph_1 \,d\nu = \alpha \int \log a\,d\nu$ (by Proposition~\ref{prop:localdim2} and weak* convergence).  It follows that
\begin{align*}
P^*(q\ph_1 - t\log a) &\geq h(\nu) + \int q\ph_1 \,d\nu - \int t\log a \,d\nu \\
&\geq -\lambda(\nu) (q\alpha + t)
\end{align*}
for every $q,t\in \RR$.  In particular, if $P^*(\ph_{q,t}) \leq 0$, then $q\alpha + t\geq 0$, hence $t\geq -q\alpha$.  This holds for all $t\geq \Td(q)$, and consequently $\Td(q) \geq -q\alpha$ as well.

As for the case $\alpha=\infty$, we use the above construction and Corollary~\ref{cor:finitedimh} to obtain $\nu\in \Mf(X)$ with $\lambda(\nu) = h(\nu) = 0$.  Furthermore, since $x\in X'$, we have $\int \ph_1 \,d\nu < 0$, and it follows immediately that $P^*(\ph_{q,t}) > 0$ for all $q < 0$ and $t\in \RR$, hence $\Td(q) = +\infty$ for all $q<0$.
\end{proof}

It only remains to prove the propositions implying Part III.

\begin{proof}[Proof of Proposition~\ref{prop:concaveD}]
It follows from Corollary~\ref{cor:ptwisedim} and the weak Gibbs property of $\mu$ that $\nu_n(\Kad) = 1$ for all $n$.  Furthermore, from the assumption that $t_n \leq \Td(q_n)$, we have
\[
0 \leq P^*(\ph_{q_n,t_n}) = h(\nu_n) + q_n\int \ph_1 \,d\nu_n - t_n \lambda(\nu_n)
= h(\nu_n) - q_n \alpha \lambda(\nu_n) - t_n \lambda(\nu_n),
\]
and applying Corollary~\ref{cor:youngs} (using the assumption that $\lambda(\nu_n)>0$) gives
\[
\dim_H \nu_n \geq q_n \alpha + t_n.
\]
Since $\nu_n(\Kad) = 1$, this in turn implies
\[
\Da \geq q_n \alpha + t_n,
\]
and taking the limit as $n\to\infty$ yields
\[
\Da \geq q\alpha + \Td(q) \geq \TdL(\alpha).\qedhere
\]
\end{proof}

\begin{proof}[Proof of Proposition~\ref{prop:alpha-intervalD}]
As before, it follows from the finiteness of $\Td(q)$ that $\frac{\di}{\di_t} P^*(\ph_{q,t}) = -\lambda(\nu_{q,t}) < 0$ for all $(q,t)\in R_\eta(I_Q)$, and consequently (assuming $n$ is large enough) we may apply the Implicit Function Theorem to obtain a continuously differentiable function $T_n\colon (q_1,q_2) \to \RR$ such that $(q,T_n(q)) \in R_\eta(I_Q)$ for all $q$, and such that
\[
P^*(\ph_{q,T_n(q)}) = \frac 1n.
\]
Furthermore, we have
\[
\lim_{n\to\infty} D^+ T_n(q_1) = D^+ \Td(q_1),\qquad
\lim_{n\to\infty} D^- T_n(q_2) = D^- \Td(q_2),
\]
so for every $\alpha$ as in the statement of the proposition, and for all sufficiently large $n$, we have
\[
-D^- T_n(q_2) < \alpha < -D^+ T_n(q_1).
\]
In particular, by the Intermediate Value Theorem, there exists $q_n$ such that $T_n'(q_n) = -\alpha$.  Let $t_n = T_n(q_n)$; then by passing to a subsequence if necessary, we may assume that $(q_n,t_n) \to (q,\Td(q))$ for some $q\in I_Q$.  Let $\nu_n$ be an ergodic equilibrium state for $\ph_{q_n,t_n}$; because $P^*(\ph_{q_n,t_n}) >0$ and $\Td(q_n) < \infty$, we have $\lambda(\nu_n) > 0$.

Finally, we observe that since $P^*(\ph_{q,t})$ is constant along the curve $(q,T_n(q))$, we have
\begin{align*}
0 = \frac{d}{dq} P^*(\ph_{q,T_n(q)})|_{q_n} &= \frac{\di}{\di q} P^*(\ph_{q,t})|_{(q_n,t_n)}  
+ T_n'(q_n) \frac{\di}{\di t} P^*(\ph_{q,t})|_{(q_n,t_n)} \\
&= \int \ph_1 \,d\nu_n + \alpha \lambda(\nu_n),
\end{align*}
and hence $\nu_n$ satisfies~\eqref{eqn:alpha-integralD}.
\end{proof}

\bibliographystyle{alpha}
\bibliography{abbrev,books,thermodynamic,multifractal}

\end{document}